\title{\vspace{-50pt}On finite generation of the Johnson filtrations}
\author{Thomas Church,\ Mikhail Ershov,\ and Andrew Putman\thanks{TC is supported in part by NSF grant DMS-1350138, the Alfred P.\ Sloan Foundation, the Institute for Advanced Study, and the Friends of the Institute. AP is supported in part by NSF grant DMS-1737434.}\vspace{-6pt}}
\date{January 11, 2021}
\numberwithin{equation}{section}
\newlist{compactitem}{itemize}{3}
\setlist[compactitem]{nosep}
\setlist[compactitem,1]{label=\textbullet}
\setlist[compactitem,2]{label=--}
\setlist[compactitem,3]{label=\ensuremath{\ast}}
\newlist{compactdesc}{description}{3}
\setlist[compactdesc]{nosep}
\newlist{compactenum}{enumerate}{3}
\setlist[compactenum]{nosep}
\setlist[compactenum,1]{label=\arabic*.}
\setlist[compactenum,2]{label=(\alph*)}
\setlist[compactenum,3]{label=\roman*.}
\newcommand{\mylabel}[2]{#2\def\@currentlabel{#2}\label{#1}}
\theoremstyle{plain}
\newtheorem{theorem}{Theorem}[section]
\newtheorem{theoremrepeat}{Theorem}
\newtheorem{maintheorem}{Theorem}
\newtheorem{proposition}[theorem]{Proposition}
\newtheorem{lemma}[theorem]{Lemma}
\newtheorem{corollary}[theorem]{Corollary}
\newenvironment{theorem-prime}[1]{\innerthm}{\endinnerthm}
\theoremstyle{definition}
\newtheorem{definition}[theorem]{Definition}
\theoremstyle{remark}
\newtheorem{remark}[theorem]{Remark}
\newtheorem{remarks}[theorem]{Remarks}
\DeclareMathOperator{\Hom}{Hom}
\DeclareMathOperator{\Ker}{ker}
\DeclareMathOperator{\Mod}{Mod}
\newcommand\Torelli{\ensuremath{{\mathcal I}}}
\newcommand\JohnsonKer{\ensuremath{{\mathcal K}}}
\newcommand\Johnson{\ensuremath{J}}
\DeclareMathOperator{\IA}{IA}
\DeclareMathOperator{\JIA}{JIA}
\DeclareMathOperator{\IO}{IO}
\DeclareMathOperator{\Sp}{Sp}
\DeclareMathOperator{\GL}{GL}
\DeclareMathOperator{\SL}{SL}
\DeclareMathOperator{\comp}{comp}
\newcommand\R{\ensuremath{\mathbb{R}}}
\newcommand\C{\ensuremath{\mathbb{C}}}
\newcommand\Z{\ensuremath{\mathbb{Z}}}
\newcommand\Q{\ensuremath{\mathbb{Q}}}
\newcommand\N{\ensuremath{\mathbb{N}}}
\DeclareMathOperator{\HH}{H}
\DeclareMathOperator{\Aut}{Aut}
\DeclareMathOperator{\SAut}{SAut}
\DeclareMathOperator{\Lie}{Lie}
\DeclareMathOperator{\Out}{Out}
\DeclareMathOperator{\Interior}{Int}
\newcommand\Set[2]{\ensuremath{\{\text{#1 $|$ #2}\}}}
\newcommand\lSet[2]{\ensuremath{\langle\text{#1 $|$ #2}\rangle}}
\newcommand\Figure[4]{
\begin{figure}[t]
\centering
\centerline{\psfig{file=#2,scale=#4}}
\caption{#3}
\label{#1}
\end{figure}}
\newcommand{\cL}{\ensuremath{\mathcal{L}}}
\newcommand{\cZ}{\ensuremath{\mathcal{Z}}}
\newcommand{\cLJ}{\mathcal{JL}}
\newcommand{\para}[1]{\medskip\noindent\textbf{#1.}}
\newcommand{\into}{\hookrightarrow}
\newcommand{\onto}{\twoheadrightarrow}
\newcommand{\bwedge}{\textstyle{\bigwedge}}
\newcommand{\abs}[1]{\left\lvert#1\right\rvert}
\newcommand{\normal}{\lhd}
\DeclareMathOperator{\ab}{ab}
\newcommand{\coloneq}{\mathrel{\mathop:}\mkern-1.2mu=}
\newcommand{\AutFn}{\ensuremath{\Aut(F_n)}}
\newcommand{\SAutFn}{\ensuremath{\SAut(F_n)}}
\newcommand{\iso}{\cong}
\newcommand{\subgp}{\mathcal{G}}
\newcommand\cC{\ensuremath{\mathcal{C}}}
\newcommand{\orho}{\ensuremath{\overline{\rho}}}
\newcommand{\BP}[1]{BP(\Torelli_{#1}^1)}
\DeclareMathOperator{\Conj}{Conj}
\newcommand{\arXiv}[1]{\href{http://arxiv.org/abs/#1}{arXiv:#1}}
\newcommand{\myemail}[1]{\href{mailto:#1}{\nolinkurl{#1}}}
\begin{document}

\maketitle

\vspace{-25pt}
\begin{abstract}
We prove that every term of the lower central series and Johnson filtrations of the Torelli subgroups of the mapping
class group and the automorphism group of a free group is finitely generated in a linear stable range.  This was originally
proved for the second terms by Ershov and He.
\end{abstract}

\section{Introduction}

\subsection{The main results}
Let $\Sigma_g^b$ be a compact oriented genus $g$ surface with either $b=0$ or $b=1$ boundary components.
We will often omit $b$ from our notation when $b=0$.
The {\em mapping class group}
of $\Sigma_g^b$, denoted $\Mod_g^b$,
is the group of isotopy classes of orientation-preserving diffeomorphisms of $\Sigma_g^b$ that
fix $\partial \Sigma_g^b$ pointwise.  The group $\Mod_g^b$ acts on $\HH_1(\Sigma_g^b;\Z)$ and preserves
the algebraic intersection form.  Since $b \leq 1$, the algebraic intersection form is a 
symplectic form, and thus this action induces a homomorphism $\Mod_g^b \rightarrow \Sp_{2g}(\Z)$ which is classically known
to be surjective. The kernel of this homomorphism is the Torelli group $\Torelli_g^b$.  We therefore have a short exact sequence
\[1 \longrightarrow \Torelli_g^b \longrightarrow \Mod_g^b \longrightarrow \Sp_{2g}(\Z) \longrightarrow 1.\]
See~\cite{FarbMargalitPrimer} for a survey of the mapping class group and Torelli group.

\para{Lower central series}
For any group $G$, the {\em lower central series} of $G$ is the sequence
\[G = \gamma_1 G \supseteq \gamma_2 G \supseteq \gamma_3 G \supseteq \cdots\]
of subgroups of $G$ defined via the inductive formula
\[\gamma_1 G = G \quad \text{and} \quad \gamma_{k+1}G = [\gamma_k G,G] \quad \quad (k \geq 1).\]
Equivalently, $\gamma_{k+1}G$ is the smallest normal subgroup of $G$ such that $G/\gamma_{k+1}G$ is
$k$-step nilpotent.  The lower central series of $\Torelli_g^b$ has connections to number theory
(see, e.g.,~\cite{Matsumoto}), to 3-manifolds (see, e.g.,~\cite{GaroufalidisLevine}), and to 
the Hodge theory of the moduli space of curves (see, e.g.,~\cite{HainCompletions,HainInfinitesimal}). Despite these connections,
the structure of the lower central series of $\Torelli_g^b$ remains mysterious.  One
of the few structural results known about it is a theorem of Hain~\cite{HainInfinitesimal} giving
a finite presentation for its associated Malcev Lie algebra.

\para{Finite generation}
A classical theorem of Dehn~\cite{DehnGen} from 1938 says that $\Mod_g^b$ is finitely generated.  Since
$\Torelli_g^b$ is an infinite-index normal subgroup of $\Mod_g^b$, there is no reason to expect $\Torelli_g^b$ to be
finitely generated, and indeed McCullough and Miller~\cite{McCulloughMiller}
proved that $\Torelli_2^b$ is not finitely generated.  However, a deep and surprising theorem of
Johnson~\cite{JohnsonFinite} says that $\Torelli_g^b$ is finitely generated for $g \geq 3$. 

\para{Johnson kernel}
The {\em Johnson kernel}, denoted $\JohnsonKer_g^b$, is the subgroup of $\Mod_g^b$ generated
by Dehn twists about simple closed separating curves.
Whether or not $\JohnsonKer_g^b$ is finitely generated for $g \geq 3$ is a well-known question. The case $b=0$ was first mentioned by McCullough and Miller in 1986 \cite{McCulloughMiller}, and appeared in Morita's 1994 problem list \cite[Question 10]{MoritaProblems} and 1998 survey \cite[Problem 2.2(i)]{MoritaSurveyProspect}.
Johnson \cite{JohnsonAbel} proved that $[\Torelli_g^b,\Torelli_g^b]$ is a finite-index subgroup
of $\JohnsonKer_g^b$.  It follows that $\JohnsonKer_g^b$ is finitely generated
if and only if $[\Torelli_g^b,\Torelli_g^b]$ is finitely generated.

Initially, various people conjectured that the answer to this finite generation question is negative for all $g\geq 3$.  This expectation shifted towards a positive answer after the deep work of Dimca and Papadima~\cite{DimcaPapadimaKg},
who proved that $\HH_1(\JohnsonKer_g;\Q)$ is finite dimensional for $g \geq 4$. Dimca, Hain, and Papadima~\cite{DimcaPapadimaHainKg} later gave a description of $\HH_1(\JohnsonKer_g;\Q)$ as a $\Torelli_g/\JohnsonKer_g$-module for $g \geq 6$, which
recently was made more explicit by Morita--Sakasai--Suzuki \cite[Theorem 1.4]{MoritaSakasaiSuzuki}.

Ershov and He~\cite{EH} recently proved that every subgroup of $\Torelli_g^b$ containing $[\Torelli_g^b,\Torelli_g^b]$ 
(in particular, $[\Torelli_g^b,\Torelli_g^b]$ itself and $\JohnsonKer_g^b$) is indeed finitely generated for $g\geq 12$. Our first theorem extends this result to all $g\geq 4$ via a new and simpler proof.  Morita's question is thus now settled with the exception of the single case $g=3$.

\begin{maintheorem}
\label{maintheorem:johnsonkernel}
For $g \geq 4$ and $b \in \{0,1\}$, every subgroup of $\Torelli_g^b$ containing $[\Torelli_g^b,\Torelli_g^b]$ 
is finitely generated.  In particular, $[\Torelli_g^b,\Torelli_g^b]$ and $\JohnsonKer_g^b$ are finitely generated.
\end{maintheorem}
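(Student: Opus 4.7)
The plan is to reduce the theorem to finite generation of $[\Torelli_g^b,\Torelli_g^b]$ alone, and then to prove the latter via a bounded generation statement. By Johnson's theorem, $\Torelli_g^b$ is finitely generated for $g\geq 3$, so its abelianization is a finitely generated abelian group. Hence for any intermediate subgroup $[\Torelli_g^b,\Torelli_g^b]\subseteq H\subseteq\Torelli_g^b$, the image of $H$ in this abelianization is finitely generated; combining finitely many lifts of these generators with any finite generating set of $[\Torelli_g^b,\Torelli_g^b]$ produces a finite generating set for $H$. So it suffices to show that $[\Torelli_g^b,\Torelli_g^b]$ itself is finitely generated.

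I would prove this via a bounded generation decomposition
\[\Torelli_g^b=A_1A_2\cdots A_N,\]
in which each $A_i$ is a finitely generated subgroup of $\Torelli_g^b$ and $N=N(g)$ is a uniform constant independent of the element being factored. The natural candidates for the $A_i$ are the subgroups of $\Torelli_g^b$ consisting of mapping classes supported on a fixed proper subsurface $\Sigma_{g'}^{b'}\subset\Sigma_g^b$ with $3\leq g'<g$; such subsurfaces exist once $g\geq 4$, and each such $A_i$ is the image of $\Torelli_{g'}^{b'}$, hence finitely generated by Johnson's theorem applied in smaller genus. The inductive base $g'=3$ is precisely why the theorem works starting at $g\geq 4$.

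Once the bounded generation is in hand, a standard commutator calculus using the identities $[xy,z]=\leftexp{x}{[y,z]}\cdot[x,z]$ and $[x,yz]=[x,y]\cdot\leftexp{y}{[x,z]}$ expresses any commutator $[g,h]$ with $g,h\in\Torelli_g^b$ as a product of at most $N^2$ conjugates of commutators $[a,b]$ with $a\in A_i$ and $b\in A_j$, where the conjugating elements are words of bounded length in the generators of the $A_i$'s. Since only finitely many such conjugates of commutators between finite generators of the $A_i$ arise, $[\Torelli_g^b,\Torelli_g^b]$ is finitely generated.

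The main obstacle is establishing the bounded generation decomposition with uniform $N$. The fact that subsurface-supported subgroups generate $\Torelli_g^b$ is classical (following, for instance, from Johnson's generating set by bounding pair maps and separating twists), but controlling the number of factors \emph{uniformly} — not allowing it to grow with word length — is considerably subtler. The most promising approach is to analyze the action of $\Torelli_g^b$ on a suitable highly connected simplicial complex built from separating curves, bounding pairs, or isotropic symplectic splittings, and to apply a Putman-style ``connectivity implies generation'' argument: stabilizers of simplices supply subsurface subgroups, while sufficient connectivity supplies the uniform bound on the number of factors. Establishing the requisite connectivity, and aligning the vertex stabilizers with the Torelli condition so that the factors really do land in the desired $A_i$, is where the real technical work lies.
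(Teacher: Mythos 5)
Your initial reduction (from an arbitrary intermediate subgroup to $[\Torelli_g^b,\Torelli_g^b]$ itself, using Johnson's finite generation of $\Torelli_g^b$) is fine and matches the paper. The fatal problem is the second step: a bounded generation decomposition $\Torelli_g^b = A_1A_2\cdots A_N$ with each $A_i$ finitely generated does \emph{not} imply that the commutator subgroup is finitely generated, and the commutator calculus you sketch does not repair this. Concretely, take $G = F_2 \times F_2 = A_1A_2$ with $A_1 = F_2\times 1$ and $A_2 = 1\times F_2$; here $N=2$ and each $A_i$ is finitely generated, yet $[G,G] = [F_2,F_2]\times[F_2,F_2]$ is an infinite-rank free-times-free group and is not finitely generated. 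The gap in your calculus is the claim that ``the conjugating elements are words of bounded length in the generators of the $A_i$'s'': when you expand $[a_1\cdots a_N, h]$ via $[xy,z]=x^{-1}[y,z]x\cdot[x,z]$, the conjugating elements are arbitrary elements of the $A_i$ (prefixes of the decomposition), and reducing each $[a,b]$ with $a\in A_i$, $b\in A_j$ to commutators of \emph{generators} introduces further conjugations by words of unbounded length. So the resulting generating set for $[\Torelli_g^b,\Torelli_g^b]$ is infinite. (Separately, the bounded generation of $\Torelli_g^b$ by proper subsurface Torelli groups with a uniform $N$ is itself a very strong statement that is not known; ordinary generation by subsurface groups is classical, but that is much weaker.)

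Finite generation of a commutator subgroup is exactly the kind of property that coarse product decompositions cannot detect, which is why the paper instead uses the Bieri--Neumann--Strebel invariant: by Theorem~\ref{theorem:BNS}, it suffices to show $\Sigma(\Torelli_g^1)=S(\Torelli_g^1)$. The paper does this by (i) choosing a finite generating set of genus-$1$ bounding pair maps whose commuting graph is connected (Proposition~\ref{prop:BPgraph}, valid for $g\geq 4$), so that any character nonvanishing on all of them lies in $\Sigma(\Torelli_g^1)$ by Lemma~\ref{lemma:BNS-EH}; and (ii) using the Zariski-irreducibility of the $\Sp_{2g}(\Z)$-action on $\Hom(\Torelli_g^1,\R)\iso\bwedge^3\HH_1(\Sigma_g^1;\R)$ to move an arbitrary nonzero character into that good position, together with the $\Out$-invariance of $\Sigma$. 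If you want to salvage your subsurface-based intuition, the connectivity of the bounding-pair graph is where it legitimately enters; but you need the BNS machinery (or something equivalent) to convert commuting generators into finite generation of $[\Torelli_g^b,\Torelli_g^b]$.
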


\para{Deeper in the lower central series}
Another result in \cite{EH} asserts that if $k\geq 3$ and $g\geq 8k-4$, then the abelianization of 
any subgroup of $\Torelli_g^b$ containing $\gamma_k\Torelli_g^b$ is finitely generated.
We will prove that any subgroup of $\mathcal I_g^b$ containing $\gamma_k  \mathcal I_g^b$ is actually finitely generated (in fact, with a better range for $g$).

\begin{maintheorem}
\label{maintheorem:torellilcs}
For $k \geq 3$ and $g \geq 2k-1$ and $b \in \{0,1\}$, every subgroup of $\Torelli_g^b$ containing
$\gamma_k\Torelli_g^b$ is finitely generated.  In particular, $\gamma_k \Torelli_g^b$ is finitely generated.
\end{maintheorem}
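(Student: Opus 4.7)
The plan is to reduce Theorem~\ref{maintheorem:torellilcs} to the finite generation of the single subgroup $\gamma_k\Torelli_g^b$, and then establish that by induction on $k$ using the technique underlying Theorem~\ref{maintheorem:johnsonkernel}.

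\emph{Reduction.} It suffices to show $\gamma_k\Torelli_g^b$ is finitely generated. Indeed, for $g\geq 3$ Johnson's theorem gives $\Torelli_g^b$ finitely generated, so $\Torelli_g^b/\gamma_k\Torelli_g^b$ is a finitely generated nilpotent group, hence polycyclic, hence Noetherian. Thus for any $H$ with $\gamma_k\Torelli_g^b\leq H\leq\Torelli_g^b$, the quotient $H/\gamma_k\Torelli_g^b$ is automatically finitely generated, and combining lifts of its generators with a finite generating set of $\gamma_k\Torelli_g^b$ produces a finite generating set for $H$.

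\emph{Induction on $k$.} The base case $k=3$ requires $\gamma_3\Torelli_g^b=[\gamma_2\Torelli_g^b,\Torelli_g^b]$ finitely generated for $g\geq 5$; here Theorem~\ref{maintheorem:johnsonkernel} provides $\gamma_2\Torelli_g^b=[\Torelli_g^b,\Torelli_g^b]$ finitely generated (since $5\geq 4$). For $k\geq 4$, the inductive hypothesis applied with $g\geq 2k-1>2k-3$ gives $\gamma_{k-1}\Torelli_g^b$ finitely generated. In either case the residual task is formally the same as the core of the proof of Theorem~\ref{maintheorem:johnsonkernel}: given a finitely generated normal subgroup $N\trianglelefteq\Torelli_g^b$ (namely $\gamma_{k-1}\Torelli_g^b$), show that $[N,\Torelli_g^b]$ is again finitely generated.

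\emph{Engine and main obstacle.} This is the hard step, because a priori $[N,\Torelli_g^b]$ is only finitely generated \emph{as a normal subgroup} of $\Torelli_g^b$: one writes down commutators $[n_i,g_j]$ of generators of $N$ and $\Torelli_g^b$ and must then close up under $\Torelli_g^b$-conjugation, a process which in a general finitely generated group can introduce infinitely many new generators. The remedy is Theorem~\ref{maintheorem:boundedgeneration}, which expresses every element of $\Torelli_g^b$ as a bounded-length product of elements from a fixed finite family of subgroups---presumably Torelli groups supported on proper subsurfaces of $\Sigma_g^b$---and the hypothesis $g\geq 2k-1$ should arise from the number of pairwise-disjoint subsurfaces required for the factorization at depth $k$. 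Using the identity $[a,bc]=[a,b]\cdot b[a,c]b^{-1}$ to decompose each $[n_i,g_j]$ according to such a factorization of $g_j$, and likewise rewriting conjugates, reduces $[N,\Torelli_g^b]$ to a finite product of subgroups of the form $[N,A]$ for $A$ a subsurface subgroup. Each $[N,A]$ is tractable because $A$ commutes with most of $N$, or because it is contained in a copy of a smaller Torelli group accessible to induction on genus, and assembling these pieces yields a finite generating set for $[N,\Torelli_g^b]$, completing the induction.
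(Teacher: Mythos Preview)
Your reduction to the finite generation of $\gamma_k\Torelli_g^b$ is correct and matches the paper. The inductive framework is also sound in outline. However, the ``Engine'' paragraph contains a genuine gap that is not repairable along the lines you suggest.

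First, there is no Theorem~\ref*{maintheorem:boundedgeneration} in the paper; the environment is declared in the preamble but never used, so you are speculating about a result that does not exist. More seriously, even if one had a bounded-generation statement of the kind you describe, your final sentence does not go through. You write that each $[N,A]$ is tractable ``because $A$ commutes with most of $N$, or because it is contained in a copy of a smaller Torelli group accessible to induction on genus.'' But $N=\gamma_{k-1}\Torelli_g^b$ is only known to be finitely generated abstractly; you have no control over \emph{which} generators it has, and in particular no reason to believe its generators are supported on small subsurfaces or commute with any given subsurface subgroup $A$. The commutator identities you invoke only reduce to finitely many subgroups $[N,A]$, each of which is still a commutator of an uncontrolled finitely generated group with a subsurface group, and there is no induction-on-genus available because $N$ is not itself a Torelli group of smaller genus.

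The paper's actual argument avoids this trap entirely by working with the BNS invariant rather than with explicit commutator manipulations. The inductive step is: assuming $G(k)=\gamma_k\Torelli_g^1$ is finitely generated, show that every nonzero character $\rho\colon G(k)\to\R$ vanishing on $G(k+1)$ lies in $\Sigma(G(k))$; Theorem~\ref{theorem:BNS} then gives $G(k+1)$ finitely generated. To establish $[\rho]\in\Sigma(G(k))$ one uses two ingredients. First, the connectivity of the commuting graph $X_m(\Mod_g^1)$ for $m=\lfloor\frac{g-1}{2}\rfloor$ (Lemma~\ref{lemma:torellixm}) lets one build, via Lemma~\ref{lemma:BNS-EH}, a generating sequence for $G(k)$ witnessing $[\rho]\in\Sigma(G(k))$ \emph{provided} $\rho$ does not vanish on certain conjugates of $G(k)_{[m]}$. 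Second, the Zariski-irreducibility of the $\Mod_g^1$-action on $\cL(k)$ (Lemma~\ref{lemma:torellizariski}) together with the bound $d_\Gamma(\cL(k))\le k$ (Proposition~\ref{prop:torellidgamma}) guarantees that some $\Mod_g^1$-translate of $\rho$ satisfies this nonvanishing condition. The hypothesis $g\ge 2k-1$ enters precisely to ensure $m\ge k-1\ge d_\Gamma(\cL(k-1))$ while keeping $2m+1\le g$.
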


\begin{remark}
Since every subgroup containing $\gamma_2\Torelli_g^b$ also contains $\gamma_3\Torelli_g^b$,
Theorem~\ref{maintheorem:torellilcs} implies Theorem~\ref{maintheorem:johnsonkernel} except
in the borderline case $g=4$, where the more abstract arguments used to prove
Theorem \ref{maintheorem:torellilcs} do not work.  In addition to handling this
one last case, our proof
of Theorem~\ref{maintheorem:johnsonkernel} is considerably simpler than our proof 
of Theorem~\ref{maintheorem:torellilcs} while using many of the same ideas.  It
thus provides a concise introduction to our general approach.
\end{remark}

\para{The Johnson filtration}
We want to highlight an important special case of Theorem~\ref{maintheorem:torellilcs}.
Fix some $g \geq 0$ and $b \in \{0,1\}$.  Pick a basepoint $\ast \in \Sigma_g^b$; if $b=1$, then
choose $\ast$ such that it lies in $\partial \Sigma_g^b$.  Define $\pi = \pi_1(\Sigma_g^b,\ast)$.  Since
$\Mod_g^1$ is built from diffeomorphisms that fix $\partial \Sigma_g^1$ and thus in particular fix $\ast$, there
is a homomorphism $\Mod_g^1 \rightarrow \Aut(\pi)$.  For closed surfaces, there is no fixed basepoint, so
we only obtain a homomorphism $\Mod_g \rightarrow \Out(\pi)$.  In both cases, this action preserves
the lower central series of $\pi$, so we obtain homomorphisms
\[\psi_g^1[k] \colon \Mod_g^1 \rightarrow \Aut(\pi/\gamma_k\pi) \quad \text{and} \quad \psi_g[k]\colon \Mod_g \rightarrow \Out(\pi/\gamma_k\pi).\]
The $k^{\text{th}}$ term of the {\em Johnson filtration} of $\Mod_g^b$, denoted $\Johnson_g^b(k)$, is the kernel
of $\psi_g^b[k+1]$.  This filtration was introduced in 1981 by Johnson \cite{JohnsonSurvey}.
Chasing the definitions, we find that $\Johnson_g^b(1) = \Torelli_g^b$.  Moreover,
Johnson~\cite{JohnsonKer} proved that $\Johnson_g^b(2) = \JohnsonKer_g^b$.
It is easy to see that $\gamma_k \Torelli_g^b\subseteq\Johnson_g^b(k)$ for all $k$, but these filtrations are
known not to be equal.  In fact, Hain proved that they even define inequivalent topologies
on $\Torelli_g^b$; see \cite[Theorem 14.6 plus \S14.4]{HainInfinitesimal}.  Since $\gamma_k\Torelli_g^b\subseteq\Johnson_g^b(k)$, the following result is a special case of Theorem~\ref{maintheorem:torellilcs}.

\begin{maintheorem}
\label{maintheorem:torellijohnson}
For $k \geq 3$ and $g \geq 2k-1$ and $b \in \{0,1\}$, the group $\Johnson_g^b(k)$ is finitely generated.
\end{maintheorem}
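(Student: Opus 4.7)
The plan is to deduce Theorem~\ref{maintheorem:torellijohnson} as a direct corollary of Theorem~\ref{maintheorem:torellilcs}. I would first note that $\Johnson_g^b(k) \subseteq \Johnson_g^b(1) = \Torelli_g^b$, so $\Johnson_g^b(k)$ is automatically a subgroup of $\Torelli_g^b$; the only thing left to verify is that it contains $\gamma_k\Torelli_g^b$. Once that containment is in place, Theorem~\ref{maintheorem:torellilcs} applies under the hypothesis $g \geq 2k-1$ and immediately yields finite generation of $\Johnson_g^b(k)$.

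To check the containment $\gamma_k\Torelli_g^b \subseteq \Johnson_g^b(k)$, the approach is the standard Lie-theoretic compatibility of the Johnson filtration with commutators:
\[
[\Johnson_g^b(i), \Johnson_g^b(j)] \subseteq \Johnson_g^b(i+j) \qquad \text{for all } i,j\geq 1.
\]
This follows immediately from the definition of $\Johnson_g^b(k)$ as the kernel of $\psi_g^b[k+1]$, combined with the inclusion $[\gamma_{i+1}\pi,\gamma_{j+1}\pi] \subseteq \gamma_{i+j+2}\pi$ for the lower central series of $\pi$: if $f$ shifts every $x \in \pi$ into $x \cdot \gamma_{i+1}\pi$ and $h$ shifts it into $x \cdot \gamma_{j+1}\pi$, a direct expansion of the commutator $[f,h](x)\cdot x^{-1}$ lands it in $\gamma_{i+j+1}\pi$. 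Starting from $\Torelli_g^b = \Johnson_g^b(1)$ and iterating, a straightforward induction on $k$ gives $\gamma_k\Torelli_g^b \subseteq \Johnson_g^b(k)$, which is the required containment.

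There is essentially no obstacle in this deduction: Theorem~\ref{maintheorem:torellijohnson} is a formal corollary of Theorem~\ref{maintheorem:torellilcs} via the filtration compatibility above, exactly as the sentence immediately preceding the statement indicates. All the substantive work lies in Theorem~\ref{maintheorem:torellilcs} itself; the present result only packages one natural special case of that stronger theorem.
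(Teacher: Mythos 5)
Your proposal is correct and is exactly the paper's argument: the paper notes that $\gamma_k\Torelli_g^b\subseteq\Johnson_g^b(k)$ (the standard centrality of the Johnson filtration, as you sketch) and then states that Theorem~\ref{maintheorem:torellijohnson} is a special case of Theorem~\ref{maintheorem:torellilcs}, with no separate proof given in \S\ref{section:mainproofs}.
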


\para{Automorphism groups of free groups}
Let $F_n$ be a free group on $n$ generators.  
The group $\AutFn$ acts on the abelianization $F_n^{\ab} = \Z^n$.  The kernel of this action is the
{\em Torelli subgroup} of $\AutFn$ and is denoted $\IA_n$. A classical theorem of
Magnus~\cite{MagnusGenerators} from 1935 says that $\IA_n$ is finitely generated for all $n$.
Building on the aforementioned work of Dimca and Papadima~\cite{DimcaPapadimaKg} for the mapping class group,
Papadima and Suciu~\cite{PapadimaSuciuKg} proved that $\HH_1([\IA_n,\IA_n];\Q)$ is finite-dimensional
for $n \geq 5$. 

Just like for the mapping class group, Ershov and He~\cite{EH} proved 
that any subgroup of $\IA_n$ containing $[\IA_n,\IA_n]$ is finitely generated for $n \geq 4$.
They also proved that the abelianization of any subgroup of $\IA_n$ containing $\gamma_k \IA_n$ is finitely generated for $n \geq 8k-4$.
The following theorem extends these results from \cite{EH} by both improving the range and strengthening the conclusion; it is a direct counterpart of Theorems~\ref{maintheorem:johnsonkernel}~and~\ref{maintheorem:torellilcs}.

\begin{maintheorem}
\label{maintheorem:ialcs}
For $k \geq 3$ and $n \geq 4k-3$, or for $k=2$ and $n\geq 4$, every subgroup of $\IA_n$ containing $\gamma_k \IA_n$ is finitely generated.  In
particular, $\gamma_k \IA_n$ is finitely generated.
\end{maintheorem}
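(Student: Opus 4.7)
The plan is to reduce Theorem~\ref{maintheorem:ialcs} to finite generation of $\gamma_k \IA_n$ itself, and then to prove the latter by adapting to $\IA_n$ the machinery developed earlier in the paper for $\Torelli_g^b$.

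\textbf{Reduction.} Magnus's theorem gives that $\IA_n$ is finitely generated. Consequently $\IA_n/\gamma_k \IA_n$ is a finitely generated nilpotent group of class $k-1$, hence polycyclic and Noetherian. This means every subgroup of $\IA_n/\gamma_k \IA_n$ is finitely generated, so if $H \subseteq \IA_n$ contains $\gamma_k \IA_n$, then finite lifts of generators for $H/\gamma_k \IA_n$ combined with any finite generating set for $\gamma_k \IA_n$ give a finite generating set for $H$. Thus the problem reduces to proving that $\gamma_k \IA_n$ is finitely generated as a group. The case $k=2$, $n \geq 4$ is precisely the Ershov--He theorem cited in the excerpt, so I focus on $k \geq 3$ and $n \geq 4k-3$.

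\textbf{Strategy.} My plan is to proceed by induction on $k$ using a finite family of ``small'' subgroups of $\IA_n$. Specifically, I would identify subgroups $G_1,\dots,G_m \subseteq \IA_n$, arising naturally as stabilizers of sub-free-factor decompositions of $F_n$, such that each $G_i$ is isomorphic to $\IA_{n_i}$ for some $n_i < n$ with $n_i \geq 4(k-1)-3$, and the $G_i$ together generate $\IA_n$. The key technical step is to establish a \emph{bounded commutator decomposition}: every element of $\gamma_k \IA_n = [\gamma_{k-1}\IA_n,\IA_n]$ is a product of boundedly many commutators $[x,y]$ with $x \in \gamma_{k-1} G_i$ and $y \in G_j$ for some pair $(i,j)$ drawn from a finite list. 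Granted this, finite generation of $\gamma_k \IA_n$ follows immediately: by the inductive hypothesis each $\gamma_{k-1} G_i$ is finitely generated, each $G_j$ is finitely generated by Magnus, and the bounded commutator expression then yields a finite generating set for $\gamma_k \IA_n$.

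\textbf{Main obstacle.} The crux is establishing the bounded commutator decomposition; this is what generalizes the Ershov--He bounded generation result (the $k=2$ input) to deeper terms of the lower central series. I expect the proof to combine the Hall--Witt identity, which lets one rewrite triple commutators in terms of commutators with the middle slot simplified, with a careful analysis of how the Magnus generators of $\IA_n$ interact with the chosen sub-free-factor subgroups $G_i$. The arithmetic bound $n \geq 4k-3$ should emerge precisely from this combinatorial bookkeeping: one must split $F_n$ so that each of the ingredient subgroups has free-factor rank at least $4(k-1)-3$ for the inductive hypothesis to apply, while simultaneously retaining enough independent free factors to host the commutator identities used in the decomposition. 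This step is where the parallel with the proof of Theorem~\ref{maintheorem:torellilcs} is closest, but the relative rigidity of free-factor splittings compared to subsurface decompositions is what I expect to make the arithmetic strictly worse than the bound $g \geq 2k-1$ obtained for $\Torelli_g^b$.
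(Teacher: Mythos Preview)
Your reduction to finite generation of $\gamma_k\IA_n$ is correct and matches the paper, and citing Ershov--He for $k=2$ is legitimate (though the paper reproves that case with its own methods).

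The strategy for $k\geq 3$, however, has a genuine gap in the step where you conclude finite generation from a bounded commutator decomposition. Suppose you succeed in showing that every element of $\gamma_k\IA_n$ is a product of at most $N$ commutators $[x,y]$ with $x\in\gamma_{k-1}G_i$ and $y\in G_j$. Knowing that $\gamma_{k-1}G_i$ and $G_j$ are finitely generated does \emph{not} imply that the set of such commutators lies in a finitely generated subgroup: if $A=\langle a_1,\ldots,a_p\rangle$ and $B=\langle b_1,\ldots,b_q\rangle$, the commutators $[a_i,b_j]$ only \emph{normally} generate $[A,B]$, not generate it as a group. So bounded products of commutators drawn from $\gamma_{k-1}G_i\times G_j$ are still parametrized by an infinite set, and no finite generating set falls out. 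This is not a technicality that Hall--Witt manipulations will fix; it is the heart of the problem, and your proposed inductive mechanism does not address it.

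The paper's approach is entirely different and avoids this obstacle. It does not attempt any explicit commutator decomposition. Instead it inducts on $k$ via the BNS invariant: assuming $\gamma_k\IA_n$ is finitely generated, it shows that every nonzero character $\rho\colon\gamma_k\IA_n\to\R$ vanishing on $\gamma_{k+1}\IA_n$ lies in $\Sigma(\gamma_k\IA_n)$, which by the Bieri--Neumann--Strebel criterion forces $\gamma_{k+1}\IA_n$ to be finitely generated. The verification uses the $[n]$-group structure on $\SAut(F_n)$, connectivity of an associated commuting graph $X_m(\SAut(F_n))$, and a Zariski-irreducibility argument (the $\SAut(F_n)$-action on each graded piece $\cL(k)$ factors through $\SL_n(\Z)$, whose Zariski closure is connected). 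The bound $n\geq 4k-3$ comes from combining $d_\Gamma(\cL(k))\leq 2k$ with the connectivity range $2m+1\leq n$; it has nothing to do with splitting $F_n$ into sub-free-factors supporting smaller copies of $\IA$.
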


\begin{remark}
One can also consider the Torelli subgroup $\IO_n$ of $\Out(F_n)$.  The homomorphism $\IA_n \rightarrow \IO_n$
is surjective, so Theorem~\ref{maintheorem:ialcs} also implies a similar result for $\gamma_k \IO_n$.
\end{remark}

\para{Johnson filtration for automorphism group of free group}
Similarly to the mapping class group, there is a natural homomorphism
\[\psi_n[k]\colon \AutFn \rightarrow \Aut(F_n / \gamma_k F_n).\]
The $k^{\text{th}}$ term of the {\em Johnson filtration} for $\AutFn$, denoted
$\JIA_n(k)$, is the kernel of ${\psi_n[k+1]}$.  This filtration was actually introduced
by Andreadakis~\cite{Andreadakis} in 1965, much earlier than the Johnson filtration for
the mapping class group.  It is well known that $\gamma_k \IA_n \subseteq \JIA_n(k)$, and Bachmuth~\cite{Bachmuth} and Andreadakis~\cite{Andreadakis} independently proved that $\gamma_2\IA_n=\JIA_n(2)$.  
Recently Satoh \cite{SatohJIA} proved that $\gamma_3 \IA_n = \JIA_n(3)$, improving an earlier
result of Pettet \cite{Pettet} saying that $\gamma_3 \IA_n$ has finite index in
$\JIA_n(3)$.  However,  
recent computer calculations of Bartholdi \cite{BartholdiErratum} (making key
use of results of Day and Putman \cite{DayPutmanH2IA}) show that these filtrations
are not commensurable for $n=3$. 
It is an open problem whether or not these two filtrations are equal (or at least
commensurable) for $n \geq 4$. Since $\gamma_k \IA_n \subseteq \JIA_n(k)$, Theorem~\ref{maintheorem:ialcs}
in particular applies to all subgroups containing $\JIA_n(k)$.  However, in this special case we
are able to prove finite generation with a better range for $n$.

\begin{maintheorem}
\label{maintheorem:iajohnson}
For $k \geq 2$ and $n \geq 2k+3$, every subgroup of $\IA_n$ containing $\JIA_n(k)$ is finitely generated.  In
particular, $\JIA_n(k)$ is finitely generated.
\end{maintheorem}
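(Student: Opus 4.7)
My plan has two parts. First, a reduction: it suffices to show that $\JIA_n(k)$ itself is finitely generated. Indeed, the homomorphism $\psi_n[k+1]$ identifies $\IA_n/\JIA_n(k)$ with a subgroup of the kernel of $\Aut(F_n/\gamma_{k+1}F_n) \to \GL_n(\Z)$; filtering this kernel by the subgroups acting trivially on successive quotients $F_n/\gamma_i F_n$ and using the classical Johnson homomorphisms shows that its graded pieces inject into the finitely generated abelian groups $\Hom(F_n^{\ab}, \gamma_i F_n/\gamma_{i+1}F_n)$, so the kernel is nilpotent. Since $\IA_n$ is finitely generated by Magnus, $\IA_n/\JIA_n(k)$ is a finitely generated nilpotent group, hence polycyclic, so every subgroup of it is finitely generated. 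Combining a finite generating set for $H/\JIA_n(k)$ with one for $\JIA_n(k)$ then yields a finite generating set for $H$.

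For the main step, I would prove finite generation of $\JIA_n(k)$ using the partial bounded generation framework developed in the paper for Theorem~\ref{maintheorem:ialcs}, but now specialized to the Johnson filtration. The strategy is: (i) exhibit finitely many finitely generated subgroups $U_1,\ldots,U_m \leq \IA_n$, each generated by Magnus-type transvections supported on a free-factor decomposition of $F_n$, together with a bounded decomposition $\IA_n = U_1 U_2 \cdots U_m \cdot \JIA_n(k)$; (ii) given a Magnus generating set $S$ of $\IA_n$, rewrite each $s \in S$ in this form to extract a finite set of ``error'' elements of $\JIA_n(k)$; (iii) combine these errors with finite generating sets for each $U_i \cap \JIA_n(k)$ to produce a finite generating set for $\JIA_n(k)$. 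The intersections $U_i \cap \JIA_n(k)$ would be shown to be finitely generated by an induction on the Johnson depth, using that each graded quotient $\JIA_n(j)/\JIA_n(j+1)$ injects into the finitely generated abelian group $\Hom(F_n^{\ab}, \cL_{j+1}(F_n^{\ab}))$, where $\cL_{j+1}$ denotes the degree $j{+}1$ piece of the free Lie algebra, and polycyclicity of $\IA_n/\JIA_n(k)$ again bridges between normal generation and bare generation.

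The main obstacle is establishing the bounded decomposition $\IA_n = U_1 \cdots U_m \cdot \JIA_n(k)$ in the sharpened range $n \geq 2k+3$, as opposed to the range $n \geq 4k-3$ appearing in Theorem~\ref{maintheorem:ialcs}. This improvement must exploit the fact that $\JIA_n(k)$ is strictly larger than $\gamma_k \IA_n$: the Johnson graded subquotients form more symmetric submodules of $\Hom(F_n^{\ab}, \cL_{j+1})$ and should admit generating sets supported on free factors of smaller rank than the corresponding lower central series subquotients do. Proving this Lie-algebraic stability result in the free Lie algebra on $n$ generators, and then transferring it back to a decomposition of $\IA_n$, is where I expect the technical heart of the argument to lie.
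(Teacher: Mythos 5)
Your opening reduction is fine and matches the paper: $\IA_n/\JIA_n(k)$ is finitely generated nilpotent, so it suffices to prove $\JIA_n(k)$ itself is finitely generated. But the main step has a fatal gap, and it also mischaracterizes the paper's proof of Theorem~\ref{maintheorem:ialcs}, which is not a ``bounded generation'' argument but an induction on $k$ through the Bieri--Neumann--Strebel invariant (Theorem~\ref{theorem:heart}). The gap is in your step (iii) and in the sentence ``polycyclicity of $\IA_n/\JIA_n(k)$ again bridges between normal generation and bare generation.'' It does not: take $G=F_2$ and $N=[F_2,F_2]$; then $G/N\iso\Z^2$ is polycyclic and $N$ is normally generated by the single element $[a,b]$, yet $N$ is infinitely generated. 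More generally, a bounded decomposition $\IA_n=U_1\cdots U_m\cdot\JIA_n(k)$ together with finite generating sets for the $U_i\cap\JIA_n(k)$ and finitely many ``error'' elements gives you at best a finite \emph{normal} generating set for $\JIA_n(k)$ inside $\IA_n$, and no mechanism in your outline converts that into actual generation. This conversion is precisely the problem the BNS machinery is built to solve.

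What the paper actually does: by induction on $j$, assuming $G(j)=\JIA_n(j)$ is finitely generated, it shows $G(j+1)$ is finitely generated by verifying $S(G(j)/G(j+1))\subseteq\Sigma(G(j))$ (Theorem~\ref{theorem:BNS}, which applies since the Johnson filtration is central, so $[G(j),G(j)]\subseteq G(j+1)$). Two inputs make this work: (a) connectivity of the commuting graph $X_m(\SAutFn)$ for $2m+1\leq n$, which via Lemma~\ref{lemma:BNS-EH} puts in $\Sigma(G(j))$ every character that is nonzero on suitable conjugates of $G(j)_{[m]}$; and (b) Zariski-irreducibility of the $\SAutFn$-action on $\cLJ(j)$, which lets one move an \emph{arbitrary} nonzero character vanishing on $G(j+1)$ into that good position by an outer automorphism. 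Your final paragraph does correctly sense where the improvement from $n\geq 4k-3$ to $n\geq 2k+3$ comes from: the graded pieces $\cLJ(j)$ of the Johnson filtration embed in $\Hom(V,\Lie_{j+1}(V))$ and are spanned by $\SAutFn$-orbits of elements of complexity at most $j+2$ (Proposition~\ref{prop:Bartholdibound}), versus the bound $2j$ for the lower central series. But this bound feeds into the commuting-graph condition $2m+1\leq n$ of the BNS argument, not into a bounded-decomposition scheme, and without the BNS step your outline cannot close.
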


\begin{remark}
For $k=2$ and $k=3$ we have $2k+3\geq 4k-3$, so in these cases Theorem~\ref{maintheorem:iajohnson} follows from Theorem~\ref{maintheorem:ialcs}.
\end{remark}

\subsection{Outline of the proof}
\label{section:outlineproof}

We now discuss the ideas behind the proofs of our theorems.

\para{Initial reductions}
We first point out two reductions which show that it suffices to prove Theorem~\ref{maintheorem:torellilcs}
for $b=1$ and for the group $\gamma_k\Torelli_g^1$.  
\begin{compactitem}
\item If $\subgp$ is a group satisfying $\gamma_k\Torelli_g^b \subseteq \subgp \subseteq \Torelli_g^b$, then letting
$\overline{\subgp}$ be the image of $\subgp$ in $\Torelli_g^b / \gamma_k \Torelli_g^b$ we have a short
exact sequence
\[1 \longrightarrow \gamma_k\Torelli_g^b \longrightarrow \subgp \longrightarrow \overline{\subgp} \longrightarrow 1.\]
Since $\Torelli_g^b / \gamma_k\Torelli_g^b$ is a finitely generated nilpotent group, its subgroup
$\overline{\subgp}$ is also finitely generated.  To prove that $\subgp$ is finitely generated, it
is thus enough to prove that $\gamma_k \Torelli_g^b$ is finitely generated.
\item The homomorphism $\Torelli_g^1 \rightarrow \Torelli_g$ obtained by gluing a disc
to $\partial \Sigma_g^1$ is surjective, and thus its restriction $\gamma_k \Torelli_g^1\to \gamma_k\Torelli_g$ is also surjective.  To prove that $\gamma_k \Torelli_g$ is finitely
generated, it is thus enough to prove that $\gamma_k\Torelli_g^1$ is finitely generated.
\end{compactitem}
Similarly, it suffices to prove Theorem~\ref{maintheorem:johnsonkernel} for $[\Torelli_g^1,\Torelli_g^1]$, Theorem~\ref{maintheorem:ialcs} for $\gamma_k \IA_n$, and Theorem~\ref{maintheorem:iajohnson} for $\subgp=\JIA_n(k)$.

\para{$\bm{[n]}$-groups}
All of our main theorems except Theorem~\ref{maintheorem:johnsonkernel} will be deduced from a general result (Corollary~\ref{corollary:heart}
below) which deals with \emph{$[n]$-groups}.  Here $[n]$ denotes the set $\{1,2,\ldots, n\}$
and an $[n]$-group is a group $G$ equipped with a distinguished collection of
subgroups $\Set{$G_I$}{$I\subseteq [n]$}$ such that $G_I\subseteq G_J$ whenever
$I\subseteq J$.  The groups $\AutFn$ and $\Mod_n^1$ along with their subgroups
$\IA_n$ and $\Torelli_n^1$ can be endowed with an $[n]$-group
structure (see Definitions~\ref{definition:ngroupautfn}~and~\ref{definition:ngroupmodg}); 
indeed, this was essentially done by Church and Putman~\cite{CP}, though the technical setup
of that paper is different from ours.

\para{Weakly commuting}
The key property we shall exploit is that these $[n]$-group structures
are {\em weakly commuting}.  By definition, an $[n]$-group $G$ is weakly commuting if for all disjoint $I,J \subseteq [n]$, there exists
some $g \in G$ such that the subgroups $G_I$ and $(G_J)^g = g^{-1} G_J g$ commute.
A closely related (but different) notion of a {\em partially commuting} $[n]$-group played an important role in \cite{EH}. We also note that weakly commuting $[n]$-groups are unrelated to the ``weak FI-groups'' that appeared in \cite{CP}, despite the similar terminology. We will not use FI-groups or weak FI-groups in this paper.

\para{BNS invariant}
Let $G$ be a finitely generated group.  
The BNS invariant is a powerful tool for studying the finite generation of subgroups 
of $G$ that contain the commutator subgroup $[G,G]$.
Let $\Hom(G,\R)$ denote the set of additive characters of
$G$, that is, homomorphisms from $G$ to $(\R,+)$. Let $S(G)$ denote the set consisting of nonzero characters of $G$ modulo multiplication by positive scalars.  As a topological space, this set is a sphere of dimension $b_1(G)-1$, 
where $b_1(G)$ is the first Betti number of $G$.
Bieri, Neumann, and Strebel \cite{BieriNeumannStrebel} introduced a certain subset $\Sigma(G)$ of $S(G)$, now called the \emph{BNS invariant of $G$}, that completely determines which subgroups of $G$ containing $[G,G]$ are finitely generated. The larger $\Sigma(G)$ is, the more such subgroups are finitely generated; in particular, all of them (including $[G,G]$ itself) are finitely generated if and only if $\Sigma(G)=S(G)$.

\para{Commuting elements}
As we will make precise in Lemma~\ref{lemma:BNS-EH} below, the presence of large
numbers of commuting generators for a group can force $\Sigma(G)$ to be a very large
subset of $S(G)$. 
This sort of mechanism has been used to completely determine the BNS invariant for several important classes of groups, including right-angled Artin groups \cite{MeierVanWyk} and pure braid groups \cite{KMM-BNS}.
However, this mechanism is usually insufficient by itself to show that $\Sigma(G) = S(G)$ (as it must
be since in the aforementioned classes of groups 
the commutator subgroup $[G,G]$ is never finitely generated
except when it is trivial).  The obstacle is the existence of nonzero
characters that vanish on almost all generators involved in these commutation relations.

\para{Computing the BNS invariant of Torelli groups}
In \cite{EH}, it was proved that $\Sigma(\Torelli_n^1)=S(\Torelli_n^1)$ for $n \geq 12$ and that
$\Sigma(\IA_n) = S(\IA_n)$ for $n \geq 4$.
  The proof
was based on the following two properties of the groups $G = \Torelli_n^1$ and $G = \IA_n$:
\begin{compactitem}
\item Similarly to right-angled Artin groups, $G$ has a finite generating set in which many pairs of elements commute.
\item The group $G$ also has a large group of outer automorphisms coming from conjugation by $\Mod_n^1$ and $\AutFn$, respectively.
\end{compactitem}
In particular, the outer automorphism group of $G$ contains a natural copy of $\SL_n(\Z)$. The corresponding action of $\SL_n(\Z)$ on $G/[G,G]$ induces an action on $S(G)$ that is ``sufficiently mixing'', which implies in particular that every orbit on $S(G)$ contains characters that do not vanish on large
numbers of generators.  Combining this with the fact that $\Sigma(G)$ is invariant under $\Out(G)$,
one deduces the equality $\Sigma(G)=S(G)$.

\para{Generalization to higher terms of the lower central series}
To extend the finite generation of $[G,G]$ for $G = \Torelli_n^1$ or $G = \IA_n$ to $\gamma_k G$ with $k>2$, we adopt an inductive approach. Assuming by induction that $\gamma_k G$ is finitely generated,
the Bieri--Neumann--Strebel criterion (see Theorem~\ref{theorem:BNS}) says that to prove this for $\gamma_{k+1}G$ we must show that $\Sigma(\gamma_k G)$ contains every character vanishing on $\gamma_{k+1}G$, or in other words that $\Sigma(\gamma_k G)$ contains
the entire sphere $S(\gamma_k G/\gamma_{k+1}G)$.

If one attempts to use the method of \cite{EH} inductively, the following issue arises.
In order to apply this method to an $[n]$-group $G$, one needs to know that $G$ has a finite generating set consisting of elements of ``small complexity''. Even if $G$ possesses such a generating set, it is impossible to deduce the same for $[G,G]$ using the Bieri--Neumann--Strebel theorem, as the proof of the latter is inherently ineffective. We resolve this issue with two ideas:
\begin{compactitem}
\item The first is the notion of the {\em commuting graph} of an $[n]$-group 
(see Definition~\ref{def:commutinggraph}).  We will use commuting graphs to show that the following holds for $n \gg k$:
if $\gamma_k G$ is finitely generated, then we can find a ``nice'' generating set for $\gamma_k G$ which has enough commuting elements. The latter implies that $\Sigma(\gamma_k G)$ contains a large open subset of  $S(\gamma_k G)$.
\item The second provides a way to take this open subset and use it to show that $\Sigma(\gamma_k G)$ contains all of $S(\gamma_k G/\gamma_{k+1}G)$.
 Similarly to \cite{EH}, this part of the proof uses the action of $\Out(G)$ on $S(\gamma_k G/\gamma_{k+1}G)$; however, instead of using combinatorial properties of this action as in \cite{EH}, we will give an abstract argument involving algebraic geometry. This aspect of our proof is reminiscent of \cite{DimcaPapadimaKg} and \cite{PapadimaSuciuKg}, but unlike those two papers we will only need very basic facts from algebraic geometry.
\end{compactitem}

\enlargethispage{\baselineskip}
\para{Outline}
In the short \S\ref{section:bns} we record the properties of the BNS invariant that we will use.  Next, in \S\ref{section:Kg} we will prove Theorem~\ref{maintheorem:johnsonkernel}. 
This proof foreshadows in a simplified setting many of the ideas used in the remainder of the paper. 
In \S\ref{section:preliminaries} we introduce   the technical framework we will use for the rest of the paper.
We use this framework in \S\ref{section:maintools} to prove a general result that will imply our main theorems, and finally in \S\ref{section:mainproofs} we prove those theorems.

\para{Conventions}
Let $G$ be a group.  For $g,h \in G$, we write $h^g = g^{-1} h g$ and $[g,h] = g^{-1} h^{-1} g h$.  Also, for a subgroup
$H \subseteq G$ and $g \in G$ we write $H^g = g^{-1} H g$.

\section{Preliminaries on the BNS invariant}
\label{section:bns}

Let $G$ be a finitely generated group.  This section contains a brief introduction
to the BNS invariant $\Sigma(G)$ of $G$; see \cite{StrebelBook} for a reference
that proves all statements for which we do not provide references.
Recall from the introduction that $\Sigma(G)$ is a subset of $S(G)$, where
\[S(G)=(\Hom(G,\R)\setminus\{0\})/\R^\times_+\]
is the quotient of the set
of nonzero characters $\Hom(G,\R)\setminus\{0\}$ by the equivalence relation that identifies two characters
if they differ by multiplication by a positive scalar. For a nonzero $\chi \in \Hom(G,\R)$, write
$[\chi]$ for its image in $S(G)$.  There are many equivalent
ways to define $\Sigma(G)$.  Perhaps the easiest to state involves the connectedness
of certain subgraphs of the Cayley graph of $G$.

\begin{definition}
Let $G$ be a finitely generated group with a fixed finite generating set $S$.  Let $\cC(G,S)$ be the Cayley graph
of $G$ with respect to $S$.  Given $[\chi]\in S(G)$ represented by $\chi\in \Hom(G,\R)$, 
%define$$\cC_{[\chi]} = \Set{$g \in G$}{$\chi(g) \geq 0$}.$$  
the \emph{BNS invariant} $\Sigma(G)$ is defined as the set of all $[\chi] \in S(G)$ such that the full subgraph of $\cC(G,S)$ spanned by the set $\Set{$g \in G$}{$\chi(g) \geq 0$}$ is connected.
\end{definition}

This definition does not depend on the choice of $S$ (though this is not obvious).
Thinking of $\Aut(G)$ as acting on $G$ on the right, we obtain an action of $\Aut(G)$ on $\Hom(G,\R)$ via the formula
\[(\alpha\cdot \chi)(g) = \chi(g^\alpha) \quad \quad (\alpha \in \Aut(G), \chi \in \Hom(G,\R), g \in G).\]
This descends to an action of $\Aut(G)$ on $S(G)$ which factors through $\Out(G)$.  The fact that the BNS invariant is independent of the generating set
implies that $\Sigma(G)$ is invariant under this action.

If $N$ is a normal subgroup of $G$, then we can identify
$\Hom(G/N,\R)$ with the subset of $\Hom(G,\R)$ consisting of those characters that vanish on
$N$.  This induces an identification of $S(G/N)$ with a subset of $S(G)$.
When $N$ contains $[G,G]$, the following theorem of
Bieri, Neumann and Strebel characterizes finite generation of $N$ in terms of $\Sigma(G)$.

\enlargethispage{\baselineskip}
\begin{theorem}[{\cite[Theorem~B1]{BieriNeumannStrebel}}]
\label{theorem:BNS}
Let $G$ be a finitely generated group and $N$ be a subgroup of $G$ containing $[G,G]$.
Then $N$ is finitely generated if and only if
$S(G/N) \subseteq \Sigma(G)$.
\end{theorem}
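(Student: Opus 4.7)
The plan is to prove each direction separately, using throughout that a class $[\chi] \in S(G)$ lies in $S(G/N)$ precisely when $\chi$ vanishes on $N$, and hence factors through the quotient map $G \to G/N$.

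For the implication $(\Rightarrow)$, I would suppose $N$ is finitely generated. Since $G$ is finitely generated, so is $G/N$, so we may choose a finite symmetric generating set $S = T \cup U$ for $G$ with $T$ generating $N$ and $U$ lifting a generating set for $G/N$. Given $[\chi] \in S(G/N)$, the character $\chi$ is constant on each coset of $N$, so $T$-edges of $\cC(G,S)$ preserve $\chi$-value. Thus within any coset of $N$ meeting $\cC_{[\chi]}$ the full subgraph is connected by $T$-edges alone. Connecting different cosets through $U$-edges reduces to the analogous connectivity question for the ``nonnegative half-space'' in the Cayley graph of the finitely generated \emph{abelian} group $G/N$ with respect to $\pi(U)$. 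Because $G/N$ is abelian, any word expressing a given element may be reordered; one thereby builds a path from any $a$ with $\bar\chi(a) \geq 0$ back to the identity whose cumulative $\bar\chi$-value first increases monotonically to some maximum and then decreases to $\bar\chi(a)$, staying nonnegative throughout.

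For the implication $(\Leftarrow)$, I would combine three ingredients: $\Sigma(G)$ is an open subset of $S(G)$, a standard fact; $S(G/N)$ is compact, being a finite-dimensional sphere since $G/N$ is finitely generated; and membership in $\Sigma(G)$ admits a ``finite witness'' reformulation essentially due to Bieri and Strebel, certifying $[\chi] \in \Sigma(G)$ by finitely many elements whose conjugates by the submonoid $G^\chi = \chi^{-1}(\R_{\geq 0})$ generate $[G,G]$. By openness and compactness, cover $S(G/N)$ by finitely many open neighborhoods contained in $\Sigma(G)$ and pick a finite witness for each. Assembling these witnesses together with a finite lift of generators of the finitely generated abelian group $N/[G,G] \subseteq G/[G,G]$ then yields a finite generating set for $N$; the crucial point is that a judicious symmetric choice (including both $[\chi]$ and $[-\chi]$) forces the intersection of the relevant submonoids to coincide with $N$ up to finite index.

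The main obstacle will be the $(\Leftarrow)$ direction, specifically the passage from the bare Cayley-graph definition of $\Sigma(G)$ to the ``finite witness'' reformulation amenable to combinatorial assembly. The openness of $\Sigma(G) \subseteq S(G)$ and the compactness of $S(G/N)$ are routine, and the final combinatorial assembly is a straightforward exercise once the witnesses are in hand. It is the equivalence between the two definitions of $\Sigma(G)$ that is the technical heart of the theory and where the real work of this theorem lies; this is the content of the Bieri--Strebel theory, which I would cite from \cite{StrebelBook} rather than reproduce.
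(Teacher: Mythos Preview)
The paper does not prove this theorem at all: it is quoted verbatim as \cite[Theorem~B1]{BieriNeumannStrebel}, and the preamble to \S\ref{section:bns} explicitly directs the reader to \cite{StrebelBook} for proofs of all statements in that section for which no proof is given. So there is no ``paper's own proof'' to compare your attempt against; the authors treat this as a black-box foundational result.

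That said, your sketch is a reasonable outline of the standard argument. The $(\Rightarrow)$ direction is essentially correct as written: the key point that $G/N$ is abelian (since $N \supseteq [G,G]$) lets you reorder generators to keep partial sums in the nonnegative half-space, and the $T$-edges handle connectivity within cosets. Your $(\Leftarrow)$ direction identifies the right three ingredients (openness of $\Sigma(G)$, compactness of $S(G/N)$, and a finite-witness reformulation), but the final assembly step is vaguer than it should be; the passage from ``finitely many witnesses'' to ``finite generating set for $N$'' hides real work, and your remark about intersecting submonoids coinciding with $N$ ``up to finite index'' is not quite how the argument runs. You are right that the technical heart is the equivalence of the Cayley-graph definition with the monoid/witness formulation, and citing \cite{StrebelBook} for this is exactly what the paper itself does for the entire theorem.
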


The following sufficient condition for an element of $S(G)$ to lie in $\Sigma(G)$ was established by Ershov and He.

\begin{lemma}[{\cite[Proposition 2.4(b)]{EH}}]
\label{lemma:BNS-EH}
Let $G$ be a finitely generated group and let $\chi\in \Hom(G,\R)$ be a nonzero character.
Suppose there exists a finite sequence $x_1,\ldots, x_r$ of elements of $G$ such that the following
hold.
\begin{compactenum}[label=(\roman*)]
\item\label{part:EHlemma-gen} $G$ is generated by $x_1,\ldots,x_r$.
\item\label{part:EHlemma-x1} $\chi(x_1)\neq 0$.
\item\label{part:EHlemma-commutator} For every $2\leq i\leq r$, there exists $j<i$ such that $\chi(x_j)\neq 0$ and such that the commutator $[x_j,x_i]$ lies in the subgroup
generated by $x_1,\ldots, x_{i-1}$.
\end{compactenum}
Then $[\chi] \in \Sigma(G)$.
\end{lemma}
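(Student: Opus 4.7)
The plan is to prove $[\chi]\in\Sigma(G)$ by induction on $r$, constructing for each $g\in\cC_{[\chi]}$ a path in $\cC(G,S)$ from $e$ to $g$ whose vertices all satisfy $\chi\ge 0$, where $S=\{x_1^{\pm1},\ldots,x_r^{\pm1}\}$. The base case $r=1$ is immediate since $G=\langle x_1\rangle$ and $\cC_{[\chi]}$ is a consecutive sequence of powers of $x_1$. For the inductive step, let $H=\langle x_1,\ldots,x_{r-1}\rangle$. Conditions (i)--(iii) transfer verbatim to $(H,\chi|_H)$ because the subgroups $\langle x_1,\ldots,x_{k-1}\rangle$ appearing for $k\le r-1$ are unchanged, so induction gives connectedness of the subgraph of $\cC(H,\{x_1^{\pm1},\ldots,x_{r-1}^{\pm1}\})$ on $H\cap\cC_{[\chi]}$. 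Let $x_j$ come from (iii) at $i=r$ and set $t=x_j^{\pm1}$ with the sign chosen so that $\chi(t)>0$; the identity $[x_j^{-1},x_r]=x_j[x_r,x_j]x_j^{-1}$ combined with $x_j\in H$ shows that $[t,x_r]\in H$ regardless of the sign.

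Two supporting facts drive the construction. First, induction on $N$ using $[t,x_r]\in H$ yields, for each $N\ge 0$ and $\epsilon\in\{\pm1\}$, an identity
\[ t^{-N}x_r^{\epsilon}t^N = x_r^{\epsilon}\cdot h_N^{(\epsilon)} \]
with $h_N^{(\epsilon)}\in H$; applying $\chi$ to both sides forces $\chi(h_N^{(\epsilon)})=0$. Second, since left-multiplication by elements of $G$ is a graph automorphism of $\cC(G,S)$, iterating left-multiplication by $t^{-1}$ promotes the inductive connectedness of $\{h\in H:\chi(h)\ge 0\}$ to connectedness of $\{h\in H:\chi(h)\ge c\}$ for every $c\in\R$.

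Given $g\in\cC_{[\chi]}$, decompose $g=h_0x_r^{\epsilon_1}h_1\cdots x_r^{\epsilon_\ell}h_\ell$ with $h_k\in H$, and set $w=t^{-N}gt^N$, so that $g=t^Nwt^{-N}$. The first supporting fact gives a parallel decomposition $w=h_0^{(N)}x_r^{\epsilon_1}h_1^{(N)}\cdots x_r^{\epsilon_\ell}h_\ell^{(N)}$ with $h_k^{(N)}\in H$ and $\chi(h_k^{(N)})=\chi(h_k)$. For $N$ chosen large relative to every prefix $\chi$-sum of this decomposition, I would concatenate three segments: an \emph{ascent} $e,t,\ldots,t^N$ on which $\chi$ is nonnegative; a \emph{traversal} from $t^N$ to $t^Nw$ built by alternating connected paths inside cosets of $H$ (each staying in $\cC_{[\chi]}$ because $N\chi(t)$ dominates all bounded prefix contributions, via the second supporting fact applied after left-translation to the coset) with single $x_r^{\pm1}$ edges; and a \emph{descent} $t^Nw,t^Nwt^{-1},\ldots,t^Nwt^{-N}=g$ via right-multiplications by $t^{-1}$, on which $\chi$ equals $(N-k)\chi(t)+\chi(g)\ge 0$ throughout.

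The main obstacle is arranging the path to end exactly at $g$: a naive ``ascend--work--descend'' scheme carrying $g$ itself would end at $t^Ng$, and there is no way to cancel a leading $t^N$ by right-multiplications. The resolution is precisely the substitution $w=t^{-N}gt^N$: because $t^Nwt^{-N}=g$ on the nose, the descent reduces to trivial $t^{-1}$ right-multiplications with automatically nonnegative $\chi$-values, while the first supporting fact guarantees $w$ has the same ``shape'' as $g$, so the traversal can reuse exactly the argument that would have connected $t^N$ to $t^Ng$.
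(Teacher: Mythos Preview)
The paper does not prove this lemma; it is quoted verbatim from \cite[Proposition~2.4(b)]{EH} without argument. So there is no proof in the present paper to compare against. That said, your argument is essentially correct and follows the standard inductive route through the Cayley-graph criterion. A few small points are worth tightening.

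First, the identity $t^{-N}x_r^{\epsilon}t^N=x_r^{\epsilon}h_N^{(\epsilon)}$ is not literally true for $\epsilon=-1$: from $[t,x_r]\in H$ one gets $t^{-1}x_r t=x_r\cdot h$ but $t^{-1}x_r^{-1}t=h'\cdot x_r^{-1}$, with the $H$-factor on the opposite side. This is harmless, since when you substitute into the decomposition of $w$ the stray $H$-factors are absorbed into the adjacent $h_k^{(N)}$, and because each such factor has $\chi$-value zero the conclusion $\chi(h_k^{(N)})=\chi(h_k)$ survives.

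Second, your ``second supporting fact'' is stated a bit too strongly: connectedness of $\{h\in H:\chi(h)\ge c\}$ for \emph{every} $c$ does not follow just by translating, since the translate only gives thresholds of the form $-m\chi(t)$. What you actually need, and what does follow, is the case $c\le 0$: given $h$ with $\chi(h)\ge c$, right-multiply by $t$ until $\chi\ge 0$ (the path stays above $\chi(h)\ge c$), then use the inductive connectedness of $\{\chi\ge 0\}$. In your application $c=-\chi(a)$ is very negative, so this suffices. It is worth noting explicitly that the resulting path from $e$ to $h_k^{(N)}$ stays in $\{\chi\ge\min(0,\chi(h_k))\}$, a bound \emph{independent of $N$}; this is what allows a single choice of $N$ (large relative to the finitely many prefix sums of the fixed decomposition of $g$) to work for every coset segment in the traversal.

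With these clarifications the ascent--traversal--descent scheme goes through, and the proof is complete.
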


\begin{remark}
\label{remark:specialcase}
An important special case of Lemma~\ref{lemma:BNS-EH} is when $x_j$ and $x_i$ in \ref{part:EHlemma-commutator}  are required to commute. This special case of the lemma was known prior to \cite{EH}.  It is essentially equivalent to \cite[Lemma~1.9]{KMM-BNS}, but the basic idea goes back further (compare with \cite[Theorem~6.1]{MeierVanWyk} from 1995).
This special case was sufficient for the purposes of \cite{EH} except when dealing with $\IA_4$ and $\IA_5$.  To prove Theorem~\ref{maintheorem:johnsonkernel} 
and the $k=2$ case of Theorem~\ref{maintheorem:ialcs} we only need this special case of Lemma~\ref{lemma:BNS-EH}, but for Theorems~\ref{maintheorem:torellilcs}--\ref{maintheorem:iajohnson}
we will make essential use of the full strength of Lemma~\ref{lemma:BNS-EH}.
\end{remark}

\section{Proof of Theorem~\ref{maintheorem:johnsonkernel}}
\label{section:Kg}

In this section we will prove Theorem~\ref{maintheorem:johnsonkernel}, 
which asserts that any subgroup of $\Torelli_g^b$ containing $[\Torelli_g^b,\Torelli_g^b]$ 
is finitely generated for $g \geq 4$ and $b \in \{0,1\}$.  
This will imply in particular that
$\JohnsonKer_g^b$ is finitely generated.
This proof follows the same outline as the proofs of our other results, but avoids a lot of
technicalities. For that reason, we suggest that the reader begin with this section.

\para{BP graph}
We will need a certain graph constructed from elements of the Torelli group.
A {\em genus-$1$ bounding pair} on $\Sigma_g^1$ (often shortened to a {\em genus-$1$ BP}) is an ordered pair $(x,y)$ of 
disjoint homologous nonseparating simple closed curves on $\Sigma_g^1$ whose union $x \cup y$ separates $\Sigma_g^1$ 
into two subsurfaces, one homeomorphic to $\Sigma_1^2$ and the other to $\Sigma_{g-2}^3$ (see Figure~\ref{figure:bpgraph}). 
If $(x,y)$ is a genus-$1$ BP, then the corresponding product of Dehn twists $T_x T_y^{-1}\in \Torelli_g^1$ is called 
a {\em genus-$1$ BP map}.
This gives a bijection between genus-$1$ BP maps and isotopy classes of genus-$1$ BPs.
All genus-$1$ BPs on $\Sigma_g^1$ lie in the same $\Mod_g^1$-orbit 
(see \cite[\S 1.3]{FarbMargalitPrimer}), and therefore all genus-$1$ BP maps are conjugate in $\Mod_g^1$.

\begin{definition}
Let $\BP{g}$ denote the {\em genus-1 BP graph} whose vertices are genus-$1$ BP maps in $\Torelli_g^1$, and where two elements are connected by an edge if they commute.
\end{definition}

\begin{remark}
Given genus-$1$ BPs $(x,y)$ and $(x',y')$, if we can  homotope the curves such that
$x \cup y$ is disjoint from $x' \cup y'$ (see Figure~\ref{figure:bpgraph}), then the BP maps $T_x T_y^{-1}$ and
$T_{x'} T_{y'}^{-1}$ commute.  The converse is also true, though we will not actually need this.
\end{remark}

\begin{proposition}
\label{prop:BPgraph}
The genus-1 BP graph $\BP{g}$ is connected for $g\geq 4$.
\end{proposition}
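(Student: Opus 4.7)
The plan is to establish connectivity of $\BP{g}$ by induction on a geometric complexity measure. Fix a reference vertex $v_0 = (x_0, y_0)$ of $\BP{g}$, and for any other vertex $v = (x, y)$ set $i(v) = i(x \cup y,\, x_0 \cup y_0)$, the minimal geometric intersection number. I would show by induction on $i(v)$ that $v$ lies in the same connected component of $\BP{g}$ as $v_0$.

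The base case $i(v) = 0$ is immediate: disjoint representatives exist, and the corresponding BP maps commute by the remark preceding the proposition, so $v$ and $v_0$ are joined by an edge of $\BP{g}$.

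For the inductive step, given $v$ with $i(v) > 0$ I would construct an intermediate vertex $v'$ such that $v$ and $v'$ are adjacent in $\BP{g}$ and $i(v') < i(v)$. The inductive hypothesis then provides a path from $v'$ to $v_0$ which, combined with the edge $v \sim v'$, yields the desired path from $v$ to $v_0$. The candidate $v'$ would be built inside the complement $R$ of the $\Sigma_1^2$ bounded by $x \cup y$: this $R$ is homeomorphic to $\Sigma_{g-2}^3$, which since $g \geq 4$ has genus at least $2$ and hence supports many genus-$1$ BPs of $\Sigma_g^1$. The curves $x_0 \cup y_0$ meet $R$ in a finite collection of arcs (and possibly some closed curves). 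I would choose $v'$ to be a genus-$1$ BP supported entirely in $R$, constructed via a surgery along a suitably chosen outermost arc of $(x_0 \cup y_0) \cap R$ so that $v'$ meets $x_0 \cup y_0$ in strictly fewer points than $x \cup y$ does.

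The main obstacle is the surgery construction: one must verify that the resulting pair $v' = (x', y')$ is a genuine genus-$1$ BP of $\Sigma_g^1$---i.e., the two curves are disjoint, nonseparating, homologous simple closed curves whose union bounds a $\Sigma_1^2$ on one side and a $\Sigma_{g-2}^3$ on the other---and that the intersection count with $x_0 \cup y_0$ indeed strictly decreases. This is a classical but careful combinatorial argument exploiting an outermost arc of $(x_0 \cup y_0) \cap R$, which cuts off a disk-like region along which one can perform the surgery. The condition $g \geq 4$ is essential here: it guarantees $R$ has genus at least $2$, giving the room to accommodate a $\Sigma_1^2$ subsurface inside $R$ after the adjustments needed to avoid the chosen arc.
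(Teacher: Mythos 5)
Your overall strategy (induction on the geometric intersection number with a fixed reference BP, housing the intermediate vertex in the complementary subsurface $R\cong\Sigma_{g-2}^3$) is a legitimate classical template and genuinely different from the paper's argument, but the inductive step --- which you yourself flag as ``the main obstacle'' --- is exactly where the entire content of the proposition lies, and as described it does not go through. Two concrete problems. First, ``outermost arc of $(x_0\cup y_0)\cap R$'' is not available: if $x\cup y$ and $x_0\cup y_0$ are in minimal position, no arc of $(x_0\cup y_0)\cap R$ cuts off a disk from $R$ together with a sub-arc of $x\cup y$ (that would be a bigon), so there is no outermost arc in the disk-cutting sense; and you never specify what is being surgered or how the result is reassembled into two disjoint, homologous, nonseparating curves cobounding a $\Sigma_1^2$ --- for bounding pairs, as opposed to single curves, this reassembly is the hard part. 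Second, the quantitative claim is already in doubt at the bottom of the induction: since $x\cup y$, $x'\cup y'$, and $x_0\cup y_0$ are all separating multicurves, every intersection number in sight is even, so when $i(v,v_0)=2$ your step demands a genus-$1$ BP supported in $R$ and \emph{entirely disjoint} from $x_0\cup y_0$, i.e.\ a $\Sigma_1^2$ embedded in the complement in $R$ of the arcs and closed components of $(x_0\cup y_0)\cap R$. Nothing in your setup guarantees that this complement has positive genus, so ``strictly decrease'' needs a real argument (or a weaker decreasing scheme together with a separate analysis of the small-intersection cases), and none is given.

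For contrast, the paper sidesteps surgery entirely: since $\Mod_g^1$ acts on $\BP{g}$ transitively on vertices, connectivity reduces to exhibiting, for each generator $s$ in Johnson's finite generating set of $\Mod_g^1$, a path from a fixed $\varphi$ to $\varphi^s$; all generators but $T_{\alpha_2}^{\pm1}$ commute with $\varphi$, and that one case has an explicit common neighbor. If you pursue your route you are in effect reproving from scratch a disjointness-graph connectivity statement that the group action reduces to a one-figure verification.
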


Proposition~\ref{prop:BPgraph} is likely folklore, but we do not know a reference, so we include a proof.

\begin{proof}[Proof of Proposition~\ref{prop:BPgraph}]
We will use the main idea from \cite[Lemma~2.1]{PutmanConnectivityNote}.  
Let ${\varphi=T_xT_y^{-1}}$ be the vertex of $\BP{g}$ depicted in Figure
\ref{figure:bpgraph}.  We must prove that there is a path in $\BP{g}$ between $\varphi$ and any other
vertex of $\BP{g}$.  The group $\Mod_g^1$ acts on $\BP{g}$ by conjugation, and this action is transitive on vertices since all genus-1 BP maps are conjugate in $\Mod_g^1$.  It is thus enough to prove that for all $f \in \Mod_g^1$, there
is a path in $\BP{g}$ from $\varphi$ to $\varphi^f$.

\Figure{figure:bpgraph}{BPGraph}{On the left are two genus-$1$ BP maps $\varphi=T_xT_y^{-1}$ and $\psi=T_{x'}T_{y'}^{-1}$ that are adjacent in $\BP{g}$. On the right are the Dehn twists involved in the definition of the set $S$.  Note that $x'$ and $y'$ on the left are  disjoint from $\alpha_2$, so $\psi=T_{x'}T_{y'}^{-1}$ commutes with $T_{\alpha_2}$.}{78}

We begin by proving a special case of this.  Let
\begin{equation}
\label{eq:Humphries-generators}
S = \Set{$T_{\alpha_i}^{\pm 1}$}{$1 \leq i \leq g$} \cup \Set{$T_{\gamma_i}^{\pm 1}$}{$1 \leq i \leq g$} \cup
\Set{$T_{\beta_i}^{\pm 1}$}{$1 \leq i \leq g-1$}
\end{equation}
be the Dehn twists depicted in Figure~\ref{figure:bpgraph}.
The set $S$ generates $\Mod_g^1$; see \cite[Theorem 1]{JohnsonFinite}.  We claim that for all $s \in S$, 
there exists a path $\eta_s$ in $\BP{g}$ from $\varphi$ to $\varphi^s$.
Indeed, all the curves $\alpha_i$, $\gamma_i$, and $\beta_i$ are disjoint from $x$ and $y$, with the exception of $\alpha_2$. Therefore for $s \notin \{T_{\alpha_2}^{\pm 1}\}$, the map $s$ commutes with $\varphi$, so $\varphi=\varphi^s$ and the claim is trivial.  
If $s \in \{T_{\alpha_2}^{\pm 1}\}$, then letting $\psi=T_{x'}T_{y'}^{-1}$ be the BP map depicted in Figure
\ref{figure:bpgraph}, we see that $x'$ and $y'$ are disjoint from $\alpha_2$. This implies that $\psi$ commutes with both $\varphi$ and $s$, and thus also with $\varphi^{s}$. We can therefore take  $\eta_s$ to be the length 2 path from $\varphi$ to $\psi$ to $\varphi^{s}$.

We now prove the general case.  Consider $f \in \Mod_g^1$.  Write
\[f = s_1 s_2 \cdots s_{\ell} \quad \quad \text{with $s_i \in S$}.\]
For $h \in \Mod_g^1$ and $s \in S$, the path $(\eta_s)^h$ goes from $\varphi^h$ to $\varphi^{sh}$.
Letting $\bullet$ be the concatenation product on paths, the desired path
from $\varphi$ to $\varphi^f$ is then
\[\eta_{s_{\ell}} \bullet (\eta_{s_{\ell-1}})^{s_{\ell}} \bullet (\eta_{s_{\ell-2}})^{s_{\ell-1} s_{\ell}} \bullet \cdots \bullet (\eta_{s_{1}})^{s_2 s_3 \cdots s_{\ell}}.\qedhere\]
\end{proof}

\begin{remark}
The genus-1 BP graph $\BP{g}$ is not connected for $g=3$. This can be seen by noting that if $T_xT_y^{-1}$ is connected to $T_{x'}T_{y'}^{-1}$ in $\BP{3}$, then the curves $\{x,y,x',y'\}$ all share the same homology class. Therefore BP maps with different homology classes must lie in different components of $\BP{3}$. The connectivity of $\BP{g}$ is the only place in the proof of Theorem~\ref{maintheorem:johnsonkernel} where
we will use the assumption $g \geq 4$.
\end{remark}

\para{Work of Johnson}
We will need three important results of Johnson that are summarized in the following
theorem; the three parts are proved in \cite{JohnsonHomeo}, \cite{JohnsonFinite}, and \cite{JohnsonAbel} respectively.

\begin{theorem}[Johnson]
\label{theorem:johnson}
For $g \geq 3$, the following hold:
\begin{compactenum}[label=(\alph*)]
\item\label{part:johnson:BP} The group $\Torelli_g^1$ is generated by genus-1 BP maps.
\item\label{part:johnson:fg}The group $\Torelli_g^1$ is finitely generated.
\item\label{part:johnson:H1} There is a $\Mod_g^1$-equivariant isomorphism
$(\Torelli_g^1)^{\ab} \otimes \R \iso \bwedge^3 \HH_1(\Sigma_g^1;\R)$,
where $\Mod_g^1$ acts on $(\Torelli_g^1)^{\ab}$ via its conjugation action on $\Torelli_g^1$.
\end{compactenum}
\end{theorem}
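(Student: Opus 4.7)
The three parts of Theorem~\ref{theorem:johnson} are classical results of Johnson proved in separate papers, and my plan is to follow his original approach, which uses substantially different techniques for each part. For part (a), I would use a connectivity argument on a simplicial complex of curves. Let $\mathcal{X}$ be the complex whose vertices are isotopy classes of nonseparating simple closed curves on $\Sigma_g^1$ with a fixed homology class, and whose edges are pairs of disjoint curves. A standard surgery argument (using $g\geq 3$) should show $\mathcal{X}$ is connected. The stabilizer in $\Torelli_g^1$ of a simplex of $\mathcal{X}$ can be related via cutting to the Torelli group of a subsurface of lower genus, so by induction one extracts a generating set consisting of genus-$1$ BP maps, which arise as the simplest elements of $\Torelli_g^1$ moving between adjacent vertices of $\mathcal{X}$. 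The main obstacle here is proving connectivity of $\mathcal{X}$, which is a delicate homological surgery argument.

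For part (b), given (a), I would exploit that all genus-$1$ BP maps are conjugate in $\Mod_g^1$ and that $\Mod_g^1$ is itself finitely generated by the Humphries set $S$. Fixing one genus-$1$ BP map $\varphi$, any conjugate $\varphi^f$ can be written via a factorization $f = s_1\cdots s_\ell$ as a telescoping product of conjugates of $\varphi$ by prefixes of $f$. The remaining task is to find finitely many BP maps such that each $\varphi^s$ for $s\in S$ already lies in the subgroup they generate; this is a finite combinatorial verification, closely related in spirit to the path-splicing argument used in the proof of Proposition~\ref{prop:BPgraph}.

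For part (c), my plan is to construct the Johnson homomorphism explicitly. Let $\pi = \pi_1(\Sigma_g^1,\ast)$. For $f\in \Torelli_g^1$, the assignment $x\mapsto f(x)x^{-1}$ sends $\pi$ into $[\pi,\pi]$ and, modulo $[\pi,[\pi,\pi]]$, factors through a homomorphism $\HH_1(\Sigma_g^1;\Z) \to \bwedge^2 \HH_1(\Sigma_g^1;\Z)$, giving
\[\tau\colon \Torelli_g^1 \longrightarrow \HH_1(\Sigma_g^1;\Z)\otimes \bwedge^2\HH_1(\Sigma_g^1;\Z)\]
whose image lies in the natural subspace $\bwedge^3\HH_1(\Sigma_g^1;\Z)$ by a symmetry check. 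Surjectivity follows by computing $\tau$ on an explicit family of genus-$1$ BP maps. The deepest step is rational injectivity of the induced map on the abelianization, which requires showing that $\ker\tau = \JohnsonKer_g^1$ lies in $[\Torelli_g^1,\Torelli_g^1]$ up to finite index. I would establish this by expressing every separating twist as a product of commutators of genus-$1$ BP maps via the lantern relation and its variants; this last step is the main technical hurdle.
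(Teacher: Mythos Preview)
The paper does not prove Theorem~\ref{theorem:johnson}; it simply records the statement and cites Johnson's original papers \cite{JohnsonHomeo}, \cite{JohnsonFinite}, and \cite{JohnsonAbel} for parts~(a), (b), and~(c) respectively. So there is no proof in the paper to compare against, and your proposal is an attempt to reconstruct Johnson's arguments from scratch. Your outlines for~(a) and~(c) are plausible (the curve-complex approach to~(a) is more modern than Johnson's original direct argument, but it is a legitimate route).

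Your sketch for~(b), however, has a genuine gap. You propose: fix a genus-$1$ BP map $\varphi$, write an arbitrary $f\in\Mod_g^1$ as $f=s_1\cdots s_\ell$ with $s_i\in S$, telescope, and then ``find finitely many BP maps such that each $\varphi^s$ for $s\in S$ already lies in the subgroup they generate.'' But this last condition is trivially satisfied by the finite set $\{\varphi\}\cup\{\varphi^s:s\in S\}$ and yields nothing. What you would actually need is a finite set $B$ of BP maps such that $\langle B\rangle$ is \emph{invariant} under conjugation by every $s\in S$, i.e.\ $b^s\in\langle B\rangle$ for every $b\in B$ and $s\in S$; only then does $\langle B\rangle$ become normal in $\Mod_g^1$ and hence contain all conjugates $\varphi^f$. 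Producing such a $B$ is exactly the hard content of Johnson's Annals paper \cite{JohnsonFinite}, and it is not a ``finite combinatorial verification'' in any routine sense. As a sanity check that no soft argument can work here: the normal closure of $a$ in $F_2=\langle a,b\rangle$ is generated by a single conjugacy class in a two-generator group, yet is free of infinite rank. The path-splicing argument in Proposition~\ref{prop:BPgraph} that you invoke proves \emph{connectivity of a graph}, which is a far weaker conclusion than finite generation of the group; conflating these is the source of the error.
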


\para{Zariski topology on the mapping class group}
The final preliminary ingredient we will need is a certain topology on the mapping class group.
The conjugation action of $\Mod_g^1$ on its normal subgroup $\Torelli_g^1$ induces
an action of $\Mod_g^1$ on the vector space $\Hom(\Torelli_g^1,\R)$.  This gives a group homomorphism
\[\Mod_g^1 \longrightarrow \GL(\Hom(\Torelli_g^1,\R)).\]
Endow $\GL(\Hom(\Torelli_g^1,\R))$ with the Zariski topology.
Let the {\em $\Hom(\Torelli_g^1,\R)$-Zariski topology} on $\Mod_g^1$ be the topology pulled back from the Zariski topology on $\GL(\Hom(\Torelli_g^1,\R))$ by this map.
The key property of this topology that we will use is as follows.
Recall that a topological space is irreducible if it cannot be written as the union of two proper closed subspaces.

\begin{lemma}
\label{lemma:zariskimodirred}
For $g \geq 3$, the group $\Mod_g^1$ equipped with the $\Hom(\Torelli_g^1,\R)$-Zariski topology is an
irreducible topological space.
\end{lemma}
\begin{proof}
In our proof, we will use three basic properties of irreducible spaces:
\begin{compactenum}[label=(\roman*)]
\item\label{part:pullback} if $Y$ is an irreducible topological space and $X \rightarrow Y$ is a set map, then the pullback
to $X$ of the topology on $Y$ makes $X$ into an irreducible topological space;
\item\label{part:image} if $Y \rightarrow Z$ is a continuous map between topological spaces and $Y$ is irreducible, then
the image of $Y$ in $Z$ is irreducible.
\item\label{part:subspace} a subspace $Z$ of a topological space $W$ is irreducible if and only if the closure of $Z$ in $W$ is irreducible.
\end{compactenum}
By property \ref{part:pullback} above, it is enough to prove that the image of $\Mod_g^1$ in $\GL(\Hom(\Torelli_g^1,\R))$ is an irreducible subspace.  Set $H = \HH_1(\Sigma_g^1;\R)$ and
$H_{\Z} = \HH_1(\Sigma_g^1;\Z)$.
Theorem~\ref{theorem:johnson}\ref{part:johnson:H1}  gives a $\Mod_g^1$-equivariant isomorphism
\[(\Torelli_g^1)^{\ab}\otimes\R \iso \bwedge^3 H.\]
Recall from the introduction that the algebraic intersection form on $H$ turns it into a symplectic vector space
and that the image of $\Mod_g^1$ in $\GL(H)$ is equal to $\Sp(H_{\Z})$.  The action of $\Mod_g^1$ on $H$ thus factors through a representation of the symplectic group $\Sp(H)$.
Since $H$ is a self-dual representation of $\Sp(H)$, this implies
that there is a $\Mod_g^1$-equivariant isomorphism
\[\Hom(\Torelli_g^1,\R) \iso \bwedge^3 H\]
as well.  Under this identification, the image of
$\Mod_g^1$ in $\GL(\Hom(\Torelli_g^1,\R))$ is equal to the image of $\Sp(H_{\Z})$ under the natural mapping
$\iota\colon \GL(H) \to \GL(\bwedge^3 H)$.
It is classical that the Zariski closure of $\Sp(H_{\Z})$ in $\GL(H)$ is $\Sp(H)\iso \Sp_{2g}(\R)$, which is a connected algebraic group and
hence an irreducible topological space (see, e.g., \cite[Theorem III.2.1]{CarterSegalMacdonald}).
Property~\ref{part:subspace} above thus implies that $\Sp(H_{\Z})$ is an irreducible topological
space.  Since $\iota$ is Zariski-continuous, property~\ref{part:image} above implies that
$\iota(\Sp(H_{\Z}))$ is irreducible, as desired.  We remark that the self-duality of $\bigwedge^3 H$ is not essential here; we could apply the exact same argument to the natural
map $\GL(H)\to \GL((\bigwedge^3 H)^*)$.
\end{proof}

\para{Putting it all together}
We now prove Theorem~\ref{maintheorem:johnsonkernel}.

\begin{proof}[Proof of Theorem~\ref{maintheorem:johnsonkernel}]
Fix some $g \geq 4$.
As discussed in \S \ref{section:outlineproof}, it suffices to prove that $[\Torelli_g^1,\Torelli_g^1]$ is finitely generated,
which by Theorem~\ref{theorem:BNS} is equivalent to showing that the
BNS invariant $\Sigma(\Torelli_g^1)$ is all of $S(\Torelli_g^1)$.
 
Theorem~\ref{theorem:johnson}\ref{part:johnson:BP} and \ref{part:johnson:fg} tell us that for $g \geq 3$
there is a finite set of genus-$1$ BP maps that generate $\Torelli_g^1$; we emphasize that we do \emph{not} need to know such a finite generating set explicitly.
Proposition~\ref{prop:BPgraph} says that $\BP{g}$ is connected for $g \geq 4$.  Combining
these two facts, we see that there exists a finite set $\Lambda=\{\lambda_1,\ldots,\lambda_r\}$ of genus-$1$ BP maps
that  generates $\Torelli_g^1$ such that the full subgraph of $\BP{g}$ spanned by $\Lambda$ is connected (simply begin with a generating set and add more vertices until the subgraph is connected).

The key property of $\Lambda$ is as follows:
\begin{equation}
\label{eqn:lambdakey}
\text{If $\rho\colon \Torelli_g^1\to \R$ satisfies $\rho(\lambda) \neq 0$ for all $\lambda \in \Lambda$, then $[\rho]\in \Sigma(\Torelli_g^1)$}.
\end{equation}
Indeed, consider such a $\rho$.
We can assume that $\Lambda=\{\lambda_1,\ldots,\lambda_r\}$ is enumerated in increasing order of distance
from some fixed basepoint in the subgraph spanned by $\Lambda$. This guarantees that for all $1 < i \leq r$, there exists some $1 \leq j < i$ such that $\lambda_i$ and $\lambda_j$ are adjacent in $\BP{g}$. In other words, the genus-$1$ BP maps $\lambda_i$ and $\lambda_j$ commute. Since $\rho(\lambda_i) \neq 0$ for all $1 \leq i \leq r$, the sequence $\lambda_1,\ldots,\lambda_r$ satisfies
all three conditions of Lemma~\ref{lemma:BNS-EH}.  That lemma now implies that $[\rho]\in \Sigma(\Torelli_g^1)$.
Note that here we only used the special case of Lemma~\ref{lemma:BNS-EH} discussed in Remark \ref{remark:specialcase}. 

Now consider an arbitrary nonzero $\rho \colon \Torelli_g^1\to \R$; we will prove that $[\rho]\in \Sigma(\Torelli_g^1)$. For each $\lambda \in \Lambda$, define
\[\cZ_{\lambda}=\Set{$\gamma\in \Mod_g^1$}{$(\gamma\cdot \rho)(\lambda)=0$}.\]
For a fixed $\lambda$ the condition $\varphi(\lambda)=0$ is a Zariski-closed condition on $\varphi$, so each $\cZ_{\lambda}$ is a closed subspace of $\Mod_g^1$ in the $\Hom(\Torelli_g^1,\R)$-Zariski topology. 
Moreover, we claim that each $\cZ_{\lambda}$ is a
proper subset of $\Mod_g^1$.  To see this, observe that the equality $\cZ_{\lambda} = \Mod_g^1$ would mean that
$(\gamma\cdot \rho)(\lambda)=\rho(\lambda^{\gamma})$ vanishes for all $\gamma\in \Mod_g^1$. Since $\lambda$ is a genus-$1$ BP map and all genus-$1$ BP maps are conjugate, this would mean that $\rho$ vanishes on all genus-$1$ BP maps.  But by Theorem~\ref{theorem:johnson}\ref{part:johnson:BP}, $\Torelli_g^1$ is generated by genus-$1$ BP maps for $g\geq 3$, so no nonzero homomorphism $\rho\colon \Torelli_g^1\to \R$ can vanish on all these elements. This verifies that $\cZ_{\lambda}\subsetneq \Mod_g^1$ as claimed.

Lemma \ref{lemma:zariskimodirred} says that $\Mod_g^1$ is an irreducible space with respect to the $\Hom(\Torelli_g^1,\R)$-Zariski topology.  Since an irreducible space cannot be written as a finite union of closed proper subspaces, 
we deduce that 
\[\bigcup_{\lambda \in \Lambda} \cZ_{\lambda} \subsetneq \Mod_g^1.\] 
Choose some $\gamma \in \Mod_g^1$ such that $\gamma \notin \cZ_{\lambda}$ for all $\lambda \in \Lambda$.  By definition, this means
that $(\gamma\cdot \rho)(\lambda)\neq 0$ for all $\lambda\in \Lambda$.  Applying \eqref{eqn:lambdakey} to $\gamma\cdot \rho$, we deduce that
$[\gamma\cdot \rho]\in \Sigma(\Torelli_g^1)$.
Since $\Sigma(\Torelli_g^1)$ is invariant under automorphisms of $\Torelli_g^1$, it follows that 
$[\rho]\in \Sigma(\Torelli_g^1)$ as well. Since $\rho$ was arbitrary, this shows that 
$\Sigma(\Torelli_g^1)$ is all of $S(\Torelli_g^1)$, as desired.
\end{proof}

We will use the same exact approach in \S\ref{section:ialcs} to prove that $[\IA_n,\IA_n]$ is finitely generated for $n\geq 4$. For now, we record the structure of the above argument; the proof of the following theorem follows exactly the proof of Theorem~\ref{maintheorem:johnsonkernel} above.

\begin{theorem}
\label{thm:Kg-argument-general}
Let $G$ be a finitely generated group. Suppose that a group $\Gamma$ acts on $G$ by automorphisms such that the following hold.
\begin{compactenum}
\item The group $G$ is generated by a single $\Gamma$-orbit $C\subset G$.
\item The image of $\Gamma$ in $\GL(\Hom(G,\R))$ is irreducible in the Zariski topology.
\item The graph whose vertices are elements $c\in C$ where two elements are connected by an edge if they commute is connected.
\end{compactenum}
Then $[G,G]$ is finitely generated.
\end{theorem}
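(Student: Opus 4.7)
The plan is to follow the proof of Theorem~\ref{maintheorem:johnsonkernel} essentially verbatim, with $G$ in place of $\Torelli_g^1$, the group $\Gamma$ in place of $\Mod_g^1$, the orbit $C$ in place of the set of genus-$1$ BP maps, and the commuting graph on $C$ in place of $\BP{g}$. By Theorem~\ref{theorem:BNS}, proving that $[G,G]$ is finitely generated is equivalent to showing $\Sigma(G) = S(G)$.

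First I would extract a finite set $\Lambda = \{\lambda_1,\ldots,\lambda_r\} \subseteq C$ that generates $G$ and whose full subgraph in the commuting graph on $C$ is connected. This uses hypotheses (1) and (3): a finite generating subset of $C$ exists since $G$ is finitely generated and $\langle C \rangle = G$, and by (3) one can adjoin finitely many additional elements of $C$ to connect the resulting subgraph (for instance by tracing paths in the commuting graph between successive generators) without losing the generating property. Ordering $\Lambda$ by distance from a fixed basepoint in this connected subgraph guarantees that each $\lambda_i$ with $i>1$ commutes with some earlier $\lambda_j$. The analog of Claim~\ref{claim:firstKg} then follows immediately from Lemma~\ref{lemma:BNS-EH} (in fact only its commuting special case): any nonzero $\rho \in \Hom(G,\R)$ with $\rho(\lambda) \neq 0$ for all $\lambda \in \Lambda$ satisfies $[\rho] \in \Sigma(G)$.

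Next, for an arbitrary nonzero $\rho \in \Hom(G,\R)$, I would pull back the Zariski topology from $\GL(\Hom(G,\R))$ to $\Gamma$ along the representation given by the $\Gamma$-action on $\Hom(G,\R)$, and define
\[\cZ_\lambda = \{\gamma \in \Gamma : (\gamma \cdot \rho)(\lambda) = 0\}\]
for each $\lambda \in \Lambda$. Each $\cZ_\lambda$ is closed in this topology. Moreover $\cZ_\lambda$ is a proper subset of $\Gamma$: if $\cZ_\lambda = \Gamma$, then $\rho$ would vanish on the $\Gamma$-orbit of $\lambda$, which by hypothesis (1) is all of $C$; since $C$ generates $G$, this would force $\rho = 0$, contradicting our choice of $\rho$. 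Hypothesis (2) says that $\Gamma$ is irreducible in this pulled-back topology, so the finite union $\bigcup_{\lambda \in \Lambda} \cZ_\lambda$ is still a proper subset of $\Gamma$. Choosing $\gamma \in \Gamma$ outside this union yields $(\gamma \cdot \rho)(\lambda) \neq 0$ for every $\lambda \in \Lambda$, and the previous step gives $[\gamma \cdot \rho] \in \Sigma(G)$. Since $\Sigma(G)$ is invariant under $\Aut(G)$ and $\Gamma$ acts by automorphisms, we conclude $[\rho] \in \Sigma(G)$, completing the proof.

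Because the three hypotheses have been tailored precisely to make the argument of Theorem~\ref{maintheorem:johnsonkernel} go through, there is no substantial new obstacle; the theorem is essentially an abstraction of that proof. The most delicate bookkeeping point is ensuring that when we enlarge our initial finite generating subset of $C$ to make the commuting subgraph connected, we only add finitely many elements and do not lose the generating property, but both are automatic since adjoining elements never shrinks the generated subgroup and the connectedness of the ambient commuting graph on $C$ means only finitely many connecting elements are needed.
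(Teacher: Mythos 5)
Your proposal is correct and is essentially the paper's own argument: the paper explicitly states that Theorem~\ref{thm:Kg-argument-general} is proved by running the proof of Theorem~\ref{maintheorem:johnsonkernel} verbatim with $C$ in place of the genus-$1$ BP maps, which is exactly what you do. All the key points (extracting a finite generating subset of $C$ with connected commuting subgraph, the properness of each $\cZ_\lambda$ via the single-orbit hypothesis, and the irreducibility argument) are handled as in the paper.
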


\section{Basic properties of \texorpdfstring{$\boldsymbol{[n]}$}{[n]}-groups}
\label{section:preliminaries}

This section contains preliminary definitions and results that will be used in the technical framework of the remainder of the paper.
  It has three sections: \S\ref{section:ngroups} introduces
$[n]$-groups and the two fundamental examples $\Aut(F_n)$ and $\Mod_g^1$, then \S\ref{section:centralseries} discusses central series and their Lie algebras, and finally
\S\ref{section:zariski} discusses Zariski-irreducible actions.

\subsection{\texorpdfstring{$\boldsymbol{[n]}$}{[n]}-groups}
\label{section:ngroups}

Set $\N = \{1,2,3,\ldots\}$.  For $n \in \N$, define $[n] = \{1,\ldots,n\}$.

\begin{definition}
Let $n\in\N$.  An {\em $[n]$-group} is a group $G$ equipped with a distinguished collection of subgroups $\Set{$G_I$}{$I \subseteq [n]$}$ such that
$G_{[n]}=G$ and such that $G_I\subseteq G_J$ whenever $I\subseteq J$.  We say that an $[n]$-group $V$ is an 
{\em $[n]$-vector space} if $V$ is a vector space and each $V_I$ is a subspace.
\end{definition}

Any subgroup or quotient of an $[n]$-group naturally inherits the structure of an $[n]$-group as follows.
\begin{definition}
\label{definition:ngroupsubquotient}
Let $G$ be an $[n]$-group.  For a subgroup $H$ of $G$, define an $[n]$-group structure on $H$ by setting
$H_I = H \cap G_I$ for all $I \subseteq [n]$.  For a quotient $G/K$ of $G$, define an $[n]$-group structure on $G/K$
by setting $(G/K)_I = G_I K / K$ for all $I \subseteq [n]$.
\end{definition}

In addition, any abelian $[n]$-group $A$ can be tensored with $\R$ to
obtain an $[n]$-vector space as follows:

\begin{definition}
\label{definition:nvectorspace}
Let $A$ be an abelian $[n]$-group. Then $A\otimes \mathbb R$ becomes an $[n]$-vector space by setting
$(A\otimes \mathbb R)_I=A_I\otimes \mathbb R$ for all $I \subseteq [n]$.
\end{definition}

\para{Key examples}
Before moving on, we define the two key examples of $[n]$-groups that we will use in this paper, namely $\AutFn$ and the
mapping class group.  The structure on $\AutFn$ is easy to define.

\begin{definition}
\label{definition:ngroupautfn}
Let $F_n$ be the free group on $\{x_1,\ldots,x_n\}$ and let $\Gamma=\AutFn$.
For
$I \subseteq [n]$, set
$F_I = \lSet{$x_i$}{$i \in I$}$, and define
\[\Gamma_I = \Set{$f \in \Gamma$}{$f(x_i) \in F_I$ for all $i \in I$ and $f(x_j) = x_j$ for all $j \in [n] \setminus I$}.\]
This endows $\Gamma$ with the structure of an $[n]$-group.
\end{definition}

For the mapping class group, it is a bit more subtle.  The natural measure of complexity for the mapping class
group is the genus $g$, so these will be $[g]$-groups.  The starting point is the following lemma (which is
implicit in \cite[\S 4.1]{CP}).

\Figure{figure:bigsurface}{BigSurface}{On the top are the $X_i$ and the $\delta_i$ and $f([1,g])$ for $g=6$.  In the middle is
$\Sigma_{\{2,3,5\}}$, and on the bottom is $\Sigma_{\{3,4,5\}}$ isotoped to be disjoint from $\Sigma_{\{1,6\}}$.}{78}

\begin{lemma}
\label{lemma:choosesurfaces}
Fix some $g \geq 1$, and let $\Sigma = \Sigma_g^1$ be a genus $g$ surface with $1$ boundary component.  We can then
choose subsurfaces $\Sigma_I$ for each $I \subseteq [g]$ such that the following hold:
\begin{compactenum}[label=(\roman*)]
\item\label{part:genus} $\Sigma_I$ is homeomorphic to $\Sigma_{\abs{I}}^1$.
\item\label{part:sigmag} $\Sigma_{[g]}=\Sigma$.
\item\label{part:sigmaI} $\Sigma_I$ is isotopic to a subsurface of $\Sigma_J$ whenever $I\subseteq J$.
\item\label{part:disjointconsecutive} If $I,J\subseteq [g]$ are disjoint
and $I$ consists of consecutive integers, then $\Sigma_I$ is isotopic
to a subsurface disjoint from $\Sigma_J$.  We remark that this need not hold
if $I$ does not consist of consecutive integers.
\end{compactenum}
\end{lemma}
\begin{proof}
As in Figure \ref{figure:bigsurface}, choose disjoint subsurfaces $X_1,\ldots,X_g \subset \Sigma_g^1$ with $X_i \cong \Sigma_1^1$
for each $i$.  Let $[1,g] \subset \R$ denote the closed interval.  Letting 
\[Y = \Sigma_g^1 \setminus \bigcup_{i=1}^g \Interior(X_i),\]
choose an embedding $f\colon [1,g] \rightarrow \Interior(Y)$, and let $\delta_1,\ldots,\delta_g$ be arcs in $Y$ such that
$\delta_i$ connects a point on $\partial X_i$ to $f(i)$.  Pick the $\delta_i$ such that they all approach $f([1,g])$ from the same side, 
such that they are all pairwise disjoint from each
other, and such that each $\delta_i$ only intersects $\partial Y \cup f([1,g])$ at its endpoints.  For $I \subseteq [g]$
enumerated as
\[I = \{i_1 < \cdots < i_k\},\]
let $\Sigma_I$ be a closed regular neighborhood of
\[f([i_1,i_k]) \cup \left(\bigcup_{j=1}^k \delta_{i_j}\right) \bigcup \left(\bigcup_{j=1}^k X_{i_j}\right).\]
See Figure \ref{figure:bigsurface}.  It is clear that these subsurfaces satisfy the conclusions of the lemma.  The
only nontrivial one is conclusion \ref{part:disjointconsecutive}, which is illustrated at the bottom of Figure \ref{figure:bigsurface}.
\end{proof}

This allows us to make the following definition:

\begin{definition}
\label{definition:ngroupmodg}
Let $\Sigma = \Sigma_g^1$ be a genus $g$ surface with $1$ boundary component.  We define a $[g]$-group structure on
$\Gamma = \Mod(\Sigma) = \Mod_g^1$ as follows.
Fix once and for all a collection of subsurfaces $\Sigma_I$ as in Lemma \ref{lemma:choosesurfaces}, and for each $I\subseteq [g]$ define $\Gamma_I$ to be the subgroup of $\Gamma$ consisting of mapping classes supported on $\Sigma_I$.
Conclusions \ref{part:sigmag} and \ref{part:sigmaI} of Lemma \ref{lemma:choosesurfaces} imply that $\Gamma$ is an $[g]$-group.
\end{definition}

\begin{remark}
Since any subgroup of an $[n]$-group inherits an $[n]$-group structure, Definition~\ref{definition:ngroupautfn} induces an $[n]$-group structure
on $\IA_n$. Similarly, Definition~\ref{definition:ngroupmodg} induces an $[g]$-group structure on $\Torelli_g^1$.
\end{remark}

\para{Fundamental properties} We now turn to some fundamental properties of $[n]$-groups.

\begin{definition}
Let $G$ be an $[n]$-group.  We say that $G$ 
is \emph{generated in degree $d$} if $G$ is generated by the set $\Set{$G_I$}{$I \subseteq [n]$, $\abs{I}=d$}$.
We write $d(G)$ for the smallest $d\geq 0$ such that $G$ is generated in degree $d$.
\end{definition}

\begin{remark}
As we will explain in \S\ref{section:mainproofs}, the $[n]$-group $\AutFn$ is generated in degree $2$ while its subgroup $\IA_n$ is generated in degree $3$.  Similarly, the $[g]$-group $\Mod_g^1$ is generated in degree $2$ and its subgroup $\Torelli_g^1$ is generated in degree $3$.
\end{remark}

\begin{definition}
\label{definition:threeproperties}
Let $G$ be an $[n]$-group.  %We make the following definitions.
\begin{compactenum}[label=(\roman*)]
\item\label{part:transitive} We say that $G$ is \emph{transitive} if $G_I$ and $G_J$ are conjugate in $G$ for all $I,J \subseteq [n]$ with $\abs{I}=\abs{J}$.
\item\label{part:commuting} We say that $G$ is \emph{commuting} if $G_I$ and $G_J$ commute for all disjoint $I,J \subseteq [n]$.
\item\label{part:weaklycommuting} We say that $G$ is \emph{weakly commuting} if for all disjoint $I,J \subseteq [n]$, there exists $g \in G$ such that
$(G_I)^g = g^{-1} G_I g$ and $G_J$ commute.
\end{compactenum}
\end{definition}

\begin{remark}
\label{remark:basicproperties}
We can see directly from Definition~\ref{definition:ngroupautfn} that the $[n]$-group $\AutFn$ is commuting, and thus so is its subgroup  $\IA_n$. It is easy to see that $\AutFn$
is transitive by using the automorphisms permuting the generators, but one can show that $\IA_n$ is not transitive (we
omit the proof since this will not be needed).  In Lemma \ref{lemma:torellibasic} below, we will see that $\Mod_g^1$ is also transitive and that $\Mod_g^1$ and $\Torelli_g^1$ are weakly commuting, but not commuting.
\end{remark}

\begin{remark}
If $G$ is an $[n]$-group satisfying any of the properties \ref{part:transitive}--\ref{part:weaklycommuting} in Definition~\ref{definition:threeproperties}, then the same is true for any quotient of $G$. If
$H$ is a subgroup of a commuting $[n]$-group $G$, then the $[n]$-group $H$ is also commuting. However, the properties of being transitive or weakly commuting need not pass to subgroups since their definitions refer to conjugation by elements of $G$ that need not lie in the subgroup.  For instance, as we mentioned in Remark \ref{remark:basicproperties} the $[n]$-group
$\Aut(F_n)$ is transitive, but its subgroup $\IA_n$ is not.
\end{remark}

\subsection{Central series and Lie algebras}
\label{section:centralseries}

We recall the following definition.

\begin{definition}
A {\em central series} of a group $G$ is a descending chain
\[G = G(1) \supseteq G(2) \supseteq G(3) \supseteq \cdots\]
of subgroups of $G$ such that $[G,G(k)] \subseteq G(k+1)$ for all $k \geq 1$.
If $G$ is a normal subgroup of a group $\Gamma$, we will call such a central series a \emph{$\Gamma$-normal central series} if $G(k)$ is normal in $\Gamma$ for all $k\geq 1$.
\end{definition}

The most common example of a central series is the lower central series of $G$; this is a $\Gamma$-normal central series if $G$ is normal in $\Gamma$.
Every central series determines a graded Lie algebra in the following way. Note that $G(k) / G(k+1)$ is an abelian group.

\begin{definition}
Let $G = G(1) \supseteq G(2) \supseteq \cdots$
be a central series of $G$.  The {\em associated graded real Lie algebra} $\cL$ is the real Lie algebra \[\cL = \bigoplus_{k=1}^{\infty} \cL(k),\quad\mbox{where}\quad\cL(k) = (G(k) / G(k+1)) \otimes \R.\] 
The Lie bracket on $\cL$ is induced by the commutator bracket on $G$, which descends to a bilinear map $\cL(k) \otimes \cL(\ell) \rightarrow \cL(k+\ell)$; see
\cite[\S II]{SerreLie}.  If $G\normal \Gamma$ and $G(k)$ is a $\Gamma$-normal central series, the conjugation action of $\Gamma$ on $G$ induces a linear action of $\Gamma$ on each $\cL(k)$; this preserves the Lie bracket and thus extends to an action of $\Gamma$ on $\cL$ by Lie algebra automorphisms.
\end{definition}

\begin{remark}
It is also common to consider the Lie ring $\bigoplus G(k)/G(k+1)$, without tensoring with $\R$. This object plays a key role in \cite{EH}, 
but in this paper we will only deal with the real Lie algebra~$\cL$.
\end{remark}

We next discuss how this interacts with an $[n]$-group structure.

\begin{definition}
\label{def:[n]vectorspacestructure}
Let $G$ be an $[n]$-group and let $G = G(1) \supseteq G(2) \supseteq  \cdots$
be a central series of $G$.  Using Definition~\ref{definition:ngroupsubquotient}, the $[n]$-group structure
on $G$ induces an $[n]$-group structure on the subquotient $G(k)/G(k+1)$. By Definition~\ref{definition:nvectorspace}, the latter induces an $[n]$-vector space structure on $\cL(k) = (G(k)/G(k+1)) \otimes \R$.  Unwinding the definitions to get an explicit description, we see that the subspace $\cL(k)_I$ is the subspace of $\cL(k)$ spanned by the image of $G(k)_I = G(k) \cap G_I$.
\end{definition}

\begin{remark}
For each $I \subseteq [n]$, this gives a Lie subalgebra $\cL_I=\bigoplus\cL(k)_I$ of $\cL$.  We warn the reader
that even if $\cL$ is generated as a Lie algebra by $\cL(1)$, the Lie algebra $\cL_I$ need not be generated
by $\cL(1)_I$.  
\end{remark}

\subsection{Zariski-irreducible actions}
\label{section:zariski}

The following definition will play an important role in our proofs.

\begin{definition}
Let $\Gamma$ be a group acting on a finite-dimensional vector space $V$.  The {\em $V$-Zariski topology} on $\Gamma$ is the pullback
to $\Gamma$ of the Zariski topology on $\GL(V)$ under the map $\Gamma \rightarrow \GL(V)$.  We say that the
action of $\Gamma$ on $V$ is {\em Zariski-irreducible} if $\Gamma$ is irreducible in the $V$-Zariski
topology (or, equivalently, if 
%the Zariski closure of 
the image of $\Gamma$ in $\GL(V)$ is irreducible).
\end{definition}

\begin{remark}
\label{remark:dualZariski}
The topologies on $\Gamma$ obtained from the action on $V$ and on the dual vector space $V^*$ coincide.
\end{remark}

\begin{remark}
\label{remark:irreduciblerepresentation}
The notion of Zariski-irreducible action should not be confused with the unrelated concept of an irreducible representation. For example, if $V$ is a $\C$-vector space and $\lambda \in \C^{\times}$ has infinite order, then the diagonal
action of $\Z$ on $V$ defined by $n \cdot \vec{v} = \lambda^n \vec{v}$ is Zariski-irreducible, but is only irreducible as a representation if $\dim V = 1$.
\end{remark}

The main property of Zariski-irreducible actions we will need is the following lemma. We note that this lemma is closely related to Lemma 3.2 of \cite{DimcaPapadimaKg}, which was used for a similar purpose. 
\begin{lemma}
\label{lemma:zariskilie}
Suppose that $\Gamma$ acts on a graded Lie algebra $\cL=\bigoplus \cL(k)$. If $\cL$ is generated by $\cL(1)$ and the action of $\Gamma$ on $\cL(1)$ is Zariski-irreducible, then the action of $\Gamma$ on $\cL(k)$ is Zariski-irreducible for all $k\geq 1$.\end{lemma}
\begin{proof}
We first observe that the image of
$\Gamma$ in $\GL(\cL(1)^{\otimes k})$ under the diagonal action is irreducible.  Indeed, the map $\Gamma \rightarrow \GL(\cL(1)^{\otimes k})$
can be factored as
\[\Gamma \rightarrow \GL(\cL(1)) \rightarrow \GL(\cL(1)^{\otimes k}).\]
By assumption, the image of $\Gamma$ in $\GL(\cL(1))$ is irreducible.  Since $\GL(\cL(1)) \rightarrow \GL(\cL(1)^{\otimes k})$
is a continuous map, this implies that the image of $\Gamma$ in $\GL(\cL(1)^{\otimes k})$ is irreducible, as claimed.

Now consider the linear map $m\colon \cL(1)^{\otimes k} \rightarrow \cL(k)$ taking the element
$v_1 \otimes \cdots \otimes v_k$ to 
$[[[v_1,v_2],\cdots],v_k]$. Since $\cL$ is generated
by $\cL(1)$, the map $m$ is surjective. Let $W=\ker m$ be its kernel.  Since $\Gamma$ acts on $\cL$ by Lie algebra automorphisms,
the map $m\colon \cL(1)^{\otimes k} \rightarrow \cL(k)$ is $\Gamma$-equivariant. Therefore the image of $\Gamma$ in $\GL(\cL(1)^{\otimes k})$ is contained in the subgroup $\GL(\cL(1)^{\otimes k},W)$ of elements which preserve $W$.

In other words, the map $\Gamma \rightarrow \GL(\cL(k))$ can be factored as
\[\Gamma \rightarrow \GL(\cL(1)^{\otimes k},W) \rightarrow \GL(\cL(k)).\]
Since the map $\GL(\cL(1)^{\otimes k},W) \rightarrow \GL(\cL(k))$ is Zariski-continuous, and we proved above that the image of $\Gamma$ in
$\GL(\cL(1)^{\otimes k},W)$ is irreducible, we conclude that the image of $\Gamma$ in $\GL(\cL(k))$ is irreducible with respect to the Zariski topology, as desired.
\end{proof}

\section{Finite generation for central series of \texorpdfstring{$[n]$}{[n]}-groups}
\label{section:maintools}

This section contains our main tools for proving that terms of a central series of an $[n]$-group are finitely generated.
It is divided into three subsections. In \S\ref{section:structuretheorem}, we state and prove the main technical theorem of this paper. It isolates and unifies the technical structure of the proofs of our main theorems.
%, which allows us to prove that terms in a central series
%of an $[n]$-group are finitely generated. 
The bounds in this theorem depend on two technical notions that we introduce here: the commuting graph of an $[n]$-group, 
and a new notion of ``degree of generation'' for an $[n]$-vector space endowed with a group action. In \S\ref{section:commutinggraph}, we show how to guarantee that the commuting graph is connected.
 Finally, in \S\ref{section:howtoverify} we show how to bound the degree of generation of a central series of an $[n]$-group.

\subsection{The  structure theorem}
\label{section:structuretheorem}

In this subsection, we prove the main technical theorem of this paper (Theorem~\ref{theorem:heart2} below) and its immediate consequence Corollary~\ref{corollary:heart}, which allows us to prove that certain terms of a central series of an $[n]$-group are finitely generated. 
The proof of our theorem is inspired by the proof in \S \ref{section:Kg} that $[\Torelli_g^1,\Torelli_g^1]$ is finitely
generated for $g \geq 4$. 

\para{Small subspaces}
The key objects underlying our proof that $[\Torelli_g^1,\Torelli_g^1]$ is finitely
generated were genus-$1$ BP maps.  These generate $\Torelli_g^1$ and
have two useful features: first, they
are supported on a small part of the surface, and second, they are all
conjugate under the action of $\Mod_g^1$.  For an $[n]$-vector space $V$ acted
upon by a group $\Gamma$, we will similarly want to regard elements
of the $\Gamma$-orbit of $V_I$ with $\abs{I}=d$ for a fixed small value of $d$
as being small in some sense.
This leads to the following definition.

\begin{definition}
\label{definition:gammageneration}
Let $V$ be an $[n]$-vector space and let $\Gamma$ be a group acting on $V$.  Define
\[d_{\Gamma}(V) = \min\Set{$d \geq 0$}{$V$ is generated by the $\Gamma$-orbits of its subspaces $V_I$ with $\abs{I}=d$}.\]
\end{definition}

\para{Commuting graph}
Another key feature of genus-$1$ BP maps used in \S3 is that they form the vertices
of a connected graph $\BP{g}$ whose edges correspond to commuting genus-$1$
BP maps.  The following graph will play a similar role in this section. 

\begin{definition}
\label{def:commutinggraph}
%[The connectivity $x(m)$]
Let $\Gamma$ be an $[n]$-group.  Fix some $m \leq n$.  The {\em $m$-commuting graph} of $\Gamma$, denoted $X_{m}(\Gamma)$, is the following graph.
\begin{compactitem}
\item The vertices of $X_m(\Gamma)$ are the $\Gamma$-conjugates of the subgroups $\Gamma_I$ with $\abs{I} = m$.
\item Two vertices are joined by an edge if the associated subgroups commute elementwise.
\end{compactitem}
We say that $X_m(\Gamma)$ is \emph{nontrivial} if it consists of more than one vertex.
\end{definition}

\para{Main theorem}
Our main technical theorem is then as follows.  It will be proven at the end of this section.
 
\begin{theorem}
\label{theorem:heart2}
Let $\Gamma$ be an $[n]$-group and let $H$ and $K$ be normal subgroups of $\Gamma$ such that 
$K\subseteq H$ and $H/K$ is abelian. Let $V=H/K\otimes \mathbb R$, so $\Gamma$ acts by conjugation on $V$.
Assume the following conditions hold.
\begin{compactenum}[label=\arabic*.,ref=\arabic*]
\item\label{condition:transitive2} The $[n]$-group $\Gamma$ is transitive.
\item\label{condition:finitelygenerated2} The group $H$ is finitely generated.
\item\label{condition:Zariski2} The action of $\Gamma$ on $V$ is Zariski-irreducible.
\item\label{condition:Xmconnected2} For some
$m\geq d_{\Gamma}(V)$, the graph $X_m(\Gamma)$ is connected and nontrivial.
\end{compactenum}
Then the group $K$ is finitely generated.
\end{theorem}

\begin{remark}
The number $d_{\Gamma}(V)$ in Condition~\ref{condition:Xmconnected2}
is defined with respect to the $[n]$-vector space structure on $V$ given
by Definitions~\ref{definition:ngroupsubquotient}~and~\ref{definition:nvectorspace}. %$d_\Gamma(\cL(k))$ appearing in condition~\ref{condition:Xmconnected}.
\end{remark}

\begin{remark}
While Theorem~\ref{theorem:heart2} does not formally require that $\Gamma$ be weakly commuting, some kind of commutativity assumption is obviously needed to ensure that the graph $X_m(\Gamma)$ is connected and nontrivial. In \S\ref{section:commutinggraph} below we show how to compute explicit bounds for weakly commuting $\Gamma$ guaranteeing that $X_m(\Gamma)$ is connected and nontrivial. 
\end{remark}

\para{Application to central series}
Before we prove Theorem \ref{theorem:heart2}, we derive the following corollary from it.  This
corollary is what we will use to prove our main results.

\begin{corollary}
\label{corollary:heart}
Let $\Gamma$ be an $[n]$-group and let $G$ be a normal subgroup of $\Gamma$.  
Let $G = G(1) \supseteq G(2) \supseteq  \cdots$ be a $\Gamma$-normal central series of $G$,
so $\Gamma$ acts by conjugation on the associated graded real Lie algebra $\cL$.  Fix $N\geq 1$, and assume the following
conditions hold.
\begin{compactenum}[label=\arabic*.,ref=\arabic*]
\item\label{condition:transitive} The $[n]$-group $\Gamma$ is transitive.
\item\label{condition:finitelygenerated} The group $G$ is finitely generated.
\item\label{condition:Zariski} The action of $\Gamma$ on $\cL(k)$ is Zariski-irreducible for all $k\geq 1$.
\item\label{condition:Xmconnected} For some
$m\geq  \max\Set{$d_{\Gamma}(\cL(k))$}{$1 \leq k < N$}$, the graph
$X_m(\Gamma)$ is connected and nontrivial.
\end{compactenum}
Then the group $G(k)$ is finitely generated for $1 \leq k \leq N$.
\end{corollary}
\begin{proof}
Apply Theorem \ref{theorem:heart2} a total of $N-1$ times, first with $H=G=G(1)$ and $K=G(2)$, then with $H=G(2)$ and $K=G(3)$, etc.
\end{proof}

\begin{remark}
In \S\ref{section:howtoverify} below we show how to effectively bound the numbers $d_{\Gamma}(\cL(k))$. 
\end{remark}

\begin{remark}
\label{remark:Lgen1}
When $\cL$ is generated by $\cL(1)$, Lemma~\ref{lemma:zariskilie} shows that to verify Condition~\ref{condition:Zariski} it suffices to check that the action of $\Gamma$ on $\cL(1)$ is Zariski-irreducible. In particular, this applies to the lower central series $G(k)=\gamma_k G$ since the graded Lie algebra of the lower central series of any group is always generated in degree 1.
\end{remark}

\para{A key lemma}
Before proving Theorem~\ref{theorem:heart2}, we establish the following key lemma which is analogous to 
\eqref{eqn:lambdakey} from the proof of Theorem~\ref{maintheorem:johnsonkernel}. The set $\Lambda$ in the lemma below
does not correspond exactly to the set $\Lambda$ appearing in \eqref{eqn:lambdakey} -- there $\Lambda$ was a generating
set for the group whose BNS invariant we were trying to understand, while in the lemma below it is a set of things
that conjugate the group whose BNS invariant we are trying to understand.
However, despite these differences the function of $\Lambda$ in our proof
is similar to that of $\Lambda$ in the proof of Theorem~\ref{maintheorem:johnsonkernel}.

\begin{lemma}
\label{lemma:key} Let $\Gamma$ be a transitive $[n]$-group and let $m \geq 1$ be such that
$X_m(\Gamma)$ is connected and nontrivial.
Let $H$ be a finitely generated normal subgroup of $\Gamma$.
Then there exists a finite subset $\Lambda \subseteq \Gamma$ with the following property.  
Let $\rho\colon H \rightarrow \R$ be a homomorphism such that
for all $\lambda \in \Lambda$, there exists some $g \in (H_{[m]})^{\lambda}=\lambda^{-1}H_{[m]}\lambda$ with $\rho(g) \neq 0$.  Then $[\rho] \in \Sigma(H)$.
\end{lemma}
\begin{proof}%[Proof of Lemma~\ref{lemma:key}]
Let $T$ be a finite generating set for $H$ with $1 \notin T$.  
Since $X_m(\Gamma)$ is
connected and nontrivial, the set $\Set{$(\Gamma_{[m]})^t$}{$t \in T \cup \{1\}$}$ of vertices of $X_m(\Gamma)$ must be contained in a finite nontrivial
connected subgraph $L$.  Let $\Lambda \subseteq \Gamma$ be a set containing $T \cup \{1\}$ such that the vertices of $L$ are 
$\Set{$(\Gamma_{[m]})^{\lambda}$}{$\lambda \in \Lambda$}$.  We remark that since
we insisted that $T \cup \{1\} \subseteq \Lambda$, it might be the case that
$(\Gamma_{[m]})^{\lambda} = (\Gamma_{[m]})^{\lambda'}$ for distinct
$\lambda,\lambda' \in \Lambda$.
We will prove that this set $\Lambda$ has the desired property.

Consider some $\rho\colon H \rightarrow \R$ such that
for all $\lambda \in \Lambda$, there exists some $g \in (H_{[m]})^{\lambda}$ with $\rho(g) \neq 0$.
We must prove that $[\rho] \in \Sigma(H)$.  To do this, we will use the criterion in Lemma~\ref{lemma:BNS-EH}
applied to $G=H$. 
This requires producing an appropriate sequence of elements that generate $H$, which we will do in
several steps.
% This
%requires unravelling some of the identifications we have been making.  Define $\chi \in S(G(k))$ to be the composition
%\[G(k) \longrightarrow G(k) / G(k+1) \longrightarrow (G(k)/G(k+1)) \otimes \R = \cL(k) \stackrel{\rho}{\longrightarrow} \R.\]
%The element $\rho$ is identified with $\chi$, so we must prove that $[\chi] \in \Sigma(G(k))$.  
%The fact that the restriction of $\rho$ to
%$(\cL(k)_{[m]})^{s}$ is nonzero for all $s \in S$ corresponds to the fact that the restriction of
%$\chi$ to $(G(k)_{[m]})^s$ is nonzero for all $s \in S$.
We begin by enumerating $\Lambda$ as $\Lambda = \{\lambda_1,\ldots,\lambda_\ell\}$, where
the ordering is chosen such that the following hold:
\begin{compactitem}
\item $\lambda_1 = 1$.
\item For all $1 < i \leq \ell$, there exists some $1 \leq j < i$ such that
the vertices $(\Gamma_{[m]})^{\lambda_i}$ and $(\Gamma_{[m]})^{\lambda_j}$ of $L$
are distinct and joined by an edge.
\end{compactitem}
We remark that the the second condition is possible since $L$ is connected and
nontrivial (and might not be possible if $L$ were trivial -- this is where that
condition is used).  Since adjacent vertices of $L$
correspond to commuting subgroups of $\Gamma$, the following key condition holds:
\begin{equation}
\label{eqn:gammacommute}
\text{for all $2 \leq i \leq \ell$, there exists some $1 \leq j \leq i-1$ with $[(\Gamma_{[m]})^{\lambda_i}, (\Gamma_{[m]})^{\lambda_j}] = 1$}.
\end{equation}

For $1 \leq i \leq \ell$, pick elements $g_i \in (H_{[m]})^{\lambda_i}$ in
the following way.  Recall that $\lambda_1 = 1$ and that $\Lambda$ contains the generating set
$T$ for $H$; we will need to pick $g_i$ slightly more carefully when $\lambda_i\in T$.
\begin{compactitem}
\item If $\lambda_i \notin T$, then use our assumption that $\rho$ does
not vanish on $(H_{[m]})^{\lambda_i}$ to pick some $g_i \in (H_{[m]})^{\lambda_i}$
with $\rho(g_i) \neq 0$.
\item If $\lambda_i \in T$, we must be more specific; in this case set $g_i = (g_1)^{\lambda_i}$.  Since $\lambda_i \in T \subseteq H$,
we still have
\[\rho(g_i) = \rho(\lambda_i^{-1} g_1 \lambda_i) = \rho(g_1)+\rho(\lambda_i)-\rho(\lambda_i)=\rho(g_1)\neq 0.\]
\end{compactitem}
Finally, let $g_{\ell+1},\ldots,g_{r}$ be an arbitrary enumeration of $T$. We emphasize for clarity that each element $t\in T$ entails \emph{two} elements of this sequence: $(g_1)^t$ will appear among the first $\ell$ elements, and $t$ itself will appear among the last $r-\ell$ elements.

We claim that the sequence $g_1,\ldots,g_{r}$ of elements of $H$ 
satisfies the three conditions of Lemma~\ref{lemma:BNS-EH}.  
We verify these three conditions as follows.
\begin{compactitem}
\item The first says that the $g_i$ generate $H$, which is true since they contain
all the elements in the generating set $T$.
\item The second says that $\rho(g_1) \neq 0$, which is true by
construction.
\item The third says that for all $2\leq i\leq r$, there exists some 
$j<i$ such that $\rho(g_j) \neq 0$ and such that $[g_j,g_i]$ lies in
the subgroup generated by $g_1,\ldots,g_{i-1}$.
There are two cases.  The first case is where $2\leq i \leq \ell$.  As we noted above (see
\eqref{eqn:gammacommute}, and recall that $H_{[m]} \subseteq \Gamma_{[m]}$), there exists some $1 \leq j < i$ such that $[g_j,g_i] = 1$.  
Since $\rho(g_j) \neq 0$ by construction, the condition follows.  The
second case is where $\ell+1 \leq i \leq r$.  Here $g_i \in T$.  We claim in this case
that $j=1$ works.  Indeed, since $T \subseteq \Lambda$,
we have $g_i = \lambda_k$ for some $1 \leq k \leq \ell$.
By construction, we have $g_k = (g_1)^{\lambda_k}=(g_1)^{g_i}$.  Therefore 
\[[g_1,g_i] = g_1^{-1} (g_{1})^{g_i}=g_1^{-1} g_{k} \in \langle g_1,\ldots,g_{\ell}\rangle \subseteq \langle g_1,\ldots,g_{i-1}\rangle,\]
as desired.
\end{compactitem}
Lemma~\ref{lemma:BNS-EH} now implies that $[\rho]\in \Sigma(H)$.
\end{proof}

\para{Putting it all together}
We finally prove Theorem \ref{theorem:heart2}, whose statement we recall for the reader's convenience.

\def\thetheoremrepeat{\ref{theorem:heart2}}
\begin{theoremrepeat}
Let $\Gamma$ be an $[n]$-group and let $H$ and $K$ be normal subgroups of $\Gamma$ such that
$K\subseteq H$ and $H/K$ is abelian. Let $V=H/K\otimes \mathbb R$, so $\Gamma$ acts by conjugation on $V$.
Assume the following conditions hold.
\begin{compactenum}[label=\arabic*.,ref=\arabic*]
\item The $[n]$-group $\Gamma$ is transitive.
\item The group $H$ is finitely generated.
\item The action of $\Gamma$ on $V$ is Zariski-irreducible.
\item For some
$m\geq d_{\Gamma}(V)$, the graph $X_m(\Gamma)$ is connected and nontrivial.
\end{compactenum}
Then the group $K$ is finitely generated.
\end{theoremrepeat}
\begin{proof}
By Theorem~\ref{theorem:BNS}, proving that $K$ is finitely generated is equivalent to showing that
$S(H/K) \subseteq \Sigma(H)$.  Here and throughout the proof we 
identify $S(H/K)$ with the set of equivalence classes of nonzero
$\rho\colon H \rightarrow \R$ that vanish on $K$.  Our proof of this
will follow the same outline as that of Theorem~\ref{maintheorem:johnsonkernel} in \S \ref{section:Kg}, though
the fine details will be different.

%The first step of the proof is to establish the following claim.  The $m$ that appears in it is the one defined in
%Condition~\ref{condition:Xmconnected}, and the superscript $\lambda$ means that we are taking the $\lambda$-conjugate of the %indicated subgroup.
Since $H$ is finitely generated and $X_m(\Gamma)$ is connected and nontrivial, we can apply Lemma \ref{lemma:key}.  Let
$\Lambda \subseteq \Gamma$ be the resulting set, so for $\rho\colon H \rightarrow \R$ the
following holds:
\begin{equation}
\label{eqn:lambdaproperty}
\text{if for all $\lambda \in \Lambda$, there exists $g \in (H_{[m]})^{\lambda}$ with $\rho(g) \neq 0$, then $[\rho] \in \Sigma(H)$}.
\end{equation}
Now consider an arbitrary nonzero $\rho \colon H \to \R$ that vanishes on $K$.  For each $\lambda \in \Lambda$, define
\[\cZ_{\lambda}=\Set{$\gamma\in \Gamma$}{$\gamma \cdot \rho$ vanishes on $(H_{[m]})^{\lambda}$}.\]
Since $\rho$ vanishes on $K$ and $V = (H/K) \otimes \R$, the map $\rho$ factors through a unique homomorphism $\orho\colon V \rightarrow \R$.  The
condition in the definition of $\cZ_{\lambda}$ is equivalent to saying that $\gamma \cdot \orho$ vanishes on $(V_{[m]})^{\lambda}$.
From this, we see that each $\cZ_{\lambda}$ is a closed subspace of $\Gamma$ in the $V$-Zariski topology.

Moreover, we claim that each $\cZ_{\lambda}$ is a proper subset of $\Gamma$.  To see this, observe first that
since $\cZ_{\lambda}$ is the translate by $\lambda \in \Gamma$ of $\cZ_1$, it suffices to check that 
$\cZ_1$ is a proper subset of $\Gamma$.  
But to have $\cZ_1=\Gamma$ would mean that $\gamma\cdot \orho$ vanishes on $V_{[m]}$ for all
$\gamma \in \Gamma$, 
or equivalently that $\orho$ vanishes on the $\Gamma$-orbit of $V_{[m]}$. Since $d_\Gamma(V)\leq m$ and 
$\Gamma$ is a transitive $[n]$-group,
the vector space $V$ is spanned by the $\Gamma$-orbit of $V_{[m]}$. 
It follows that $\orho = 0$.  This contradicts the fact that $\rho$ is nonzero, so we deduce that
$\cZ_{\lambda}$ is a proper subset of $\Gamma$, as claimed.

Recall now that $\Gamma$ is irreducible in the $V$-Zariski topology.
Since an irreducible space cannot be written as a finite union of closed proper subspaces,
we deduce that
\[\bigcup_{\lambda \in \Lambda} \cZ_{\lambda} \subsetneq \Gamma.\]
Choose some $\gamma \in \Gamma$ such that $\gamma \notin \cZ_{\lambda}$ for all $\lambda \in \Lambda$.  By definition, this means
that the restriction of $\gamma \cdot \rho$ to $(H_{[m]})^{\lambda}$ is nonzero for all $\lambda\in \Lambda$.  
We can therefore apply \eqref{eqn:lambdaproperty} 
to $\gamma\cdot \rho$ to deduce that $[\gamma\cdot \rho]\in \Sigma(H)$.
Since $\Sigma(H)$ is invariant under automorphisms of $H$, it follows that
$[\rho]\in \Sigma(H)$ as well. Since $\rho\colon H \rightarrow \R$ was an arbitrary homomorphism
vanishing on $K$, this shows that $\Sigma(H)$ contains all of $S(H/K)$, as desired.
\end{proof}

\subsection{Connectivity of the commuting graph}
\label{section:commutinggraph}

In this section we give an easy-to-verify sufficient condition for $X_m(\Gamma)$ to be connected.  

\begin{remark}
\label{remark:nontrivial}
Our results need $X_m(\Gamma)$ to be not only connected, but also nontrivial.  However,
nontriviality is a technicality that is in practice trivial to verify -- it is
enough for $\Gamma_{[m]}$ to not be a normal subgroup of $\Gamma$, which holds
for all the $\Gamma$ considered in this paper.
\end{remark}

In order to state our condition, we need some
additional terminology.

\begin{definition} 
\label{definition:complexity}
Let $\Gamma$ be an $[n]$-group and let $g \in \Gamma$.
\begin{compactitem}
\item[(a)] The \emph{complexity} of $g$,
denoted $\comp(g)$, is the smallest $c \geq 0$ such that $g\in \Gamma_I$ for some $I\subseteq [n]$ with $\abs{I}=c$.
\item[(b)] The element $g$ is \emph{good} if for any $I,J\subseteq[n]$ such that $g\in \Gamma_I$ and such that $J$ is disjoint from $I$, the element $g$ commutes 
with all elements of $\Gamma_J$.
\end{compactitem}
\end{definition}

\begin{remark}
An $[n]$-group $\Gamma$ is generated in degree $c$ if and only if it is generated by elements of complexity at most $c$, and $\Gamma$
is commuting if and only if all of its elements are good.
\end{remark}

\begin{remark}
It is reasonable to consider making a different definition, which would define the complexity of $g \in \Gamma$ instead to be the smallest $c$ such that $g$ lies in a \emph{conjugate} of $\Gamma_I$ for some
$I \subseteq [n]$ with $\abs{I}=c$. However, the above definition works better in our proofs.
\end{remark}

Our result is then as follows.  Recall that the conditions of being transitive and weakly commuting
were defined in Definition \ref{definition:threeproperties}.

\begin{proposition} 
\label{prop:connectivity}
Let $\Gamma$ be a transitive weakly commuting $[n]$-group and let $S$ be a generating set for $\Gamma$.  
Set $c=\max\Set{$\comp(s)$}{$s \in S$}$, and for some $m \geq 1$ assume that either of the following
two conditions hold:
\begin{compactenum}[label=(\alph*)]
\item $2m+c \leq n$, or
\item $2m+c-1 \leq n$ and every element of $S$ is good.
\end{compactenum}
Then $X_m(\Gamma)$ is connected.
\end{proposition}
\begin{proof} 
The proof follows the same outline as the proof of Proposition~\ref{prop:BPgraph}, though there are some minor differences. 
Without loss of generality, we can assume that $S$ is symmetric, i.e.\ that for all $s \in S$, we also have $s^{-1} \in S$.  
We must prove that there is a path in $X_m(\Gamma)$ between $\Gamma_{[m]}$ and
any other vertex.  Since $\Gamma$ is transitive, it acts transitively on the vertices of $X_m(\Gamma)$.  It
is thus enough to prove that for all $g \in \Gamma$, there is a path in $X_m(\Gamma)$ from
$\Gamma_{[m]}$ to $(\Gamma_{[m]})^{g}$.

We begin with a special case of this.  Consider some $s \in S$.  We claim that there exists
a path $\eta_s$ in $X_m(\Gamma)$ from
$\Gamma_{[m]}$ to $(\Gamma_{[m]})^{s}$.  Since $\comp(s) \leq c$ by assumption, we can 
pick $I \subseteq [n]$ with $\abs{I}= c$ such that $s \in \Gamma_{I}$.  
Set $J = [m] \cup I$.  The subgroups $\Gamma_{[m]}$ and $(\Gamma_{[m]})^s$ of $\Gamma$ both lie in $\Gamma_J$ and
$\abs{J} \leq m+c$.  We divide the proof into two cases corresponding to the two possible hypotheses
in the proposition.
\begin{compactitem}
\item The first is where $2m+c \leq n$.  We 
can then find some $K \subseteq [n]$ with $\abs{K}=m$ that is disjoint from $J$.  Since
$\Gamma$ is weakly commuting, there exists some $f \in \Gamma$ such that $(\Gamma_K)^f$ commutes with $\Gamma_J$.  Since both $\Gamma_{[m]}$ and $(\Gamma_{[m]})^s$ are contained in $\Gamma_J$, this implies that $(\Gamma_K)^f$ commutes with
both $\Gamma_{[m]}$ and $(\Gamma_{[m]})^s$.  The vertex $(\Gamma_K)^f$ of $X_m(\Gamma)$ is thus connected by an edge to both $\Gamma_{[m]}$ and $(\Gamma_{[m]})^s$, and we have found a length $2$ path in
$X_m(\Gamma)$ from $\Gamma_{[m]}$ to $(\Gamma_{[m]})^s$, as claimed.
\item The second is where $S$ consists of good elements and $2m+c-1 \leq n$.  If the intersection $[m] \cap I$ is non-empty, then $\abs{J} \leq m+c-1$ and
the argument in the previous paragraph applies.  If instead $[m]$ and $I$ are disjoint, then the fact that $s$ is good implies that
$(\Gamma_{[m]})^s = \Gamma_{[m]}$, and there is nothing to prove.
\end{compactitem}

We now deal with the general case.  Consider $g \in \Gamma$.  Since $S$ is symmetric, we can write
\[g = s_1 s_2 \cdots s_{\ell} \quad \quad \text{with $s_i \in S$}.\]
For $h \in \Gamma$ and $s \in S$, the path $(\eta_s)^h$ goes from $(\Gamma_{[m]})^h$ to $(\Gamma_{[m]})^{sh}$.
Letting $\bullet$ be the concatenation product on paths, the desired path
from $\Gamma_{[m]}$ to $(\Gamma_{[m]})^{g}$ is then
\[\eta_{s_{\ell}} \bullet (\eta_{s_{\ell-1}})^{s_{\ell}} \bullet (\eta_{s_{\ell-2}})^{s_{\ell-1} s_{\ell}} \bullet \cdots \bullet (\eta_{s_{1}})^{s_2 s_3 \cdots s_{\ell}}.\qedhere\]
\end{proof}

\subsection{Bounding \texorpdfstring{$d_\Gamma(\cL(k))$}{d-Gamma(L(k))}}
\label{section:howtoverify} 

In this section, we show how to effectively bound the numbers $d_{\Gamma}(\cL(k))$ 
in the statement of Corollary~\ref{corollary:heart}.
We start with the following lemma (see Definition~\ref{definition:complexity} for the definition
of $\comp(v)$).  

\begin{lemma}
\label{lemma:boundcommutator}
Let $G$ be an $[n]$-group and let
$G=G(1)\supseteq G(2)\supseteq \cdots$ be a central series.  Let
$\cL = \bigoplus \cL(k)$ be the associated graded real Lie algebra, and endow each $\cL(k)$ with the $[n]$-vector space structure
induced by the $[n]$-group structure on $G$ (see Definition~\ref{def:[n]vectorspacestructure}).
Given $v \in \cL(k)$ and $v' \in \cL(k')$, consider $[v,v']\in \cL(k+k')$.  The following then hold.
\begin{compactenum}[label=(\roman*)]
\item\label{part:compgeneric} We have $\comp([v,v']) \leq \comp(v) + \comp(v')$.
\item\label{part:compweakly} If $G$ is weakly commuting, then $\comp([v,v']) \leq \max\{\comp(v)+\comp(v')-1,0\}$.
\end{compactenum}
\end{lemma}
\begin{proof}
Given $g\in G(m)$, let
\[\pi_m(g) \in \cL(m) = (G(m)/G(m+1)) \otimes \R\]
denote its projection to $\cL(m)$, that is, $\pi_m(g)=gG(m+1)\otimes 1$. Note that
\[\pi_{m+m'}([g,g'])=[\pi_m(g),\pi_{m'}(g')]\] for all $g\in G(m)$ and $g'\in G(m')$, by the definition of the bracket on $\cL$.

Choose $I \subseteq [n]$ such that $v\in \cL(k)_{I}$ and $\abs{I}=\comp(v)$,
and similarly choose $I' \subseteq [n]$ such that $v' \in \cL(k')_{I'}$ and $\abs{I'}=\comp(v')$.  By the definition of the $[n]$-vector space
structures on the $\cL(m)$, we can write
\[v = \sum_{i=1}^{r} \lambda_i \pi_k(g_i) \quad \quad \text{and} \quad \quad v' = \sum_{j=1}^{r'} \lambda'_j \pi_{k'}(g_j'),\]
where $\lambda_i,\lambda'_j \in \R$ and $g_i \in G(k)_I$ and $g_j' \in G(k')_{I'}$.  We then have
\[ [v,v'] = \sum_{i,j} \lambda_i \lambda'_j [\pi_k(g_i), \pi_{k'}(g_j')] = \sum_{i,j} \lambda_i \lambda'_j \pi_{k+k'}([g_i,g_j']).\]
Since $[g_i,g_j']\in G(k+k')\cap G_{I\cup I'}=G(k+k')_{I\cup I'}$, it follows that
$\pi_{k+k'}([g_i,g_j'])\in \cL(k+k')_{I\cup I'}$.  We deduce that $[v,v']\in \cL(k+k')_{I\cup I'}$.
Since $\abs{I \cup I'}\leq \abs{I}+\abs{I'}=\comp(v)+\comp(v')$, this proves~\ref{part:compgeneric}.

Suppose now that $G$ is weakly commuting.
If $I \cap I' \neq\emptyset$, then $\abs{I \cup I'}\leq \abs{I}+\abs{I'}-1$, and \ref{part:compweakly} follows.
If instead $I\cap I'=\emptyset$, then there exists $x\in G$ such that $(G_{I})^x$ and $G_{I'}$ commute. Since
\[g_i^{-1}(g_i)^x=[g_i,x]\in G(k+1),\]
we have $\pi_k(g_i)=\pi_k((g_i)^x)\in \cL(k)$, so
\[\pi_{k+k'}([g_i,g_j'])=\pi_{k+k'}([(g_i)^x,g_j'])\in \cL(k+k').\]
But since $(g_i)^x$ and $g_j'$ commute for all $i$ and $j$, we have
$\pi_{k+k'}([(g_i)^x,g_j'])=\pi_{k+k'}(1)=0$. It follows that $[v,v']=0$, and in particular that
$\comp([v,v'])=0$, proving~\ref{part:compweakly}.
\end{proof}

We can now prove our main proposition.  Recall that the quantity $d_{\Gamma}(V)$
is defined for an arbitrary $[n]$-vector space $V$ endowed with an action of a group $\Gamma$ (see Definition~\ref{definition:gammageneration}).

\begin{proposition} 
\label{prop:generalbound}
Let $\Gamma$ be an $[n]$-group, let $G$ be a normal subgroup of $\Gamma$, and let 
$G=G(1)\supseteq G(2)\supseteq \cdots$ be a $\Gamma$-normal central series.  Let
$\cL = \bigoplus \cL(k)$ be the associated graded real Lie algebra, and assume that
$\cL$ is generated by $\cL(1)$.  Set $d=d(\cL(1))$.
If $G$ is weakly commuting, set $e=d-1$; otherwise, set $e=d$.  Then for all $k\geq 2$, we have
$d_\Gamma(\cL(k)) \leq d_\Gamma(\cL(k-1)) + e$.  In particular, by induction we have 
\[d_\Gamma(\cL(k)) \leq d_\Gamma(\cL(1)) +(k-1)e\leq d+(k-1)e.\]
\end{proposition}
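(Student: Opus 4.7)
The plan is to induct on $k \geq 2$ to prove the recursive bound $d_\Gamma(\cL(k)) \leq d_\Gamma(\cL(k-1)) + e$; combined with the trivial estimate $d_\Gamma(\cL(1)) \leq d(\cL(1)) = d$, this yields the announced conclusion $d_\Gamma(\cL(k)) \leq d + (k-1)e$.

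For the inductive step, set $m = d_\Gamma(\cL(k-1))$, so that $\cL(k-1)$ is spanned by the subspaces $\gamma \cdot \cL(k-1)_J$ with $\gamma \in \Gamma$ and $\abs{J} = m$. Since $\cL$ is generated as a Lie algebra by $\cL(1)$, the Jacobi identity gives $\cL(k) = [\cL(1), \cL(k-1)]$, hence
\[
\cL(k) \;=\; \sum_{\gamma,J}\, [\cL(1),\, \gamma \cdot \cL(k-1)_J] \;=\; \sum_{\gamma,J}\, \gamma \cdot [\cL(1),\, \cL(k-1)_J],
\]
the second equality using $\Gamma$-equivariance of the bracket together with the $\Gamma$-invariance of $\cL(1)$ as a subspace of $\cL$. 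Expanding $\cL(1) = \sum_{\abs{I}=d} \cL(1)_I$ using $d(\cL(1)) = d$ and observing that $\cL_{I \cup J}$ is a Lie subalgebra containing both $\cL(1)_I$ and $\cL(k-1)_J$, we get $[\cL(1)_I, \cL(k-1)_J] \subseteq \cL(k)_{I \cup J}$ with $\abs{I \cup J} \leq m + d$. This immediately gives $d_\Gamma(\cL(k)) \leq m + d$, proving the recursive bound with $e = d$ in the general case.

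The weakly commuting case rests on the following Lie-algebra upgrade of weak commuting: \emph{if $G$ is weakly commuting, then $[\cL_I, \cL_J] = 0$ in $\cL$ for every pair of disjoint $I, J \subseteq [n]$}. To establish this, choose $g \in G$ with $G_I^g$ and $G_J$ commuting elementwise. Since each $G(a)$ is $\Gamma$-normal and $g \in G \subseteq \Gamma$, we have $G_I^g \cap G(a) = G(a)_I^g$; and since $[G(a), G] \subseteq G(a+1)$, conjugation by $g$ acts trivially on $\cL(a) = (G(a)/G(a+1)) \otimes \R$, so the image of $G(a)_I^g$ in $\cL(a)$ coincides with the image of $G(a)_I$, namely $\cL(a)_I$. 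The relation $[G_I^g, G_J] = 1$ in $G$ therefore descends to $[\cL(a)_I, \cL(b)_J] = 0$ for all $a, b$. Returning to the displayed decomposition, the summands indexed by $(I,J)$ with $I \cap J = \emptyset$ now vanish, and the remaining ones satisfy $\abs{I \cup J} \leq m + d - 1$; this yields $d_\Gamma(\cL(k)) \leq m + d - 1$, proving the recursive bound with $e = d - 1$. The only nontrivial step is this passage from weak commuting in $G$ to strict commuting at the level of brackets in $\cL$; once that vanishing is in hand, the rest of the argument is formal bookkeeping with $\Gamma$-translated generating sets.
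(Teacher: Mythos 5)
Your proposal is correct and follows essentially the same route as the paper: it uses $\cL(k)=[\cL(1),\cL(k-1)]$, moves the $\Gamma$-conjugation outside the bracket by equivariance, bounds $[\cL(1)_I,\cL(k-1)_J]\subseteq\cL(k)_{I\cup J}$, and in the weakly commuting case shows the bracket vanishes for disjoint $I,J$ because the witnessing element $g\in G$ acts trivially on the associated graded. The only difference is presentational (you argue at the level of subspaces where the paper tracks complexities of individual elements), and you correctly use weak commutativity of $G$ itself, which is exactly what the paper requires.
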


\begin{remark}
We emphasize that the definition of $e$ in Proposition~\ref{prop:generalbound} depends on whether or not the \emph{normal} subgroup $G$
is weakly commuting as an $[n]$-group.  This is a stronger condition than the hypothesis in Proposition~\ref{prop:connectivity},
which only asserts that $\Gamma$ is weakly commuting.
\end{remark}

\begin{proof}[Proof of Proposition~\ref{prop:generalbound}]
Fix $k\geq 2$. Our goal is to show that 
\[d_\Gamma(\cL(k))\leq d_\Gamma(\cL(k-1))+e.\] 
In other words, we must show that any $v\in \cL(k)$ can be written as a finite sum
of {$\Gamma$\nobreakdash-conjugates} of elements of complexity at most $d_\Gamma(\cL(k-1))+e$.
Since the Lie algebra $\cL$ is generated by $\cL(1)$, we can write $v$ as a finite sum of elements of the form
$[w,s]$ with $w \in \cL(k-1)$ and $s \in \cL(1)$.  Since the desired conclusion is closed under addition, it
suffices to handle the case of a single term, i.e.\ the case where $v = [w,s]$ with $w \in \cL(k-1)$ and $s \in \cL(1)$.

By the definition of $d_\Gamma(\cL(k-1))$, we can write $w$ as a finite sum of elements of the form
$u^{\gamma}$ with $\gamma \in \Gamma$ and $u \in \cL(k-1)$ satisfying $\comp(u) \leq d_\Gamma(\cL(k-1))$.
Since the Lie bracket on $\cL$ is bilinear and our desired conclusion is closed under addition, it again
suffices to handle the case of a single term, i.e.\ the case where 
$v=[u^{\gamma},s]$ with $\gamma \in \Gamma$ and $u \in \cL(k-1)$ satisfying $\comp(u) \leq d_\Gamma(\cL(k-1))$.

Since $\Gamma$ acts on $\cL$ by Lie algebra automorphisms, we have 
$[u^{\gamma},s] = [u,s^{\gamma^{-1}}]^{\gamma}$.  By the definition of $d(\cL(1))$, we can write
\[s^{\gamma^{-1}} = \sum_{i=1}^r s_i \quad \quad \text{with $s_i \in \cL(1)$ satisfying $\comp(s_i) \leq d$}.\]
It follows that
\begin{equation}
\label{eq:v-ugamma-s}
v=[u^{\gamma},s] = [u,s^{\gamma^{-1}}]^{\gamma} = \sum_{i=1}^r [u,s_i]^{\gamma}.
\end{equation}
Letting $\varepsilon=1$ if $G$ is weakly commuting and $\varepsilon=0$ otherwise, Lemma \ref{lemma:boundcommutator} implies that for each $i$ we have
\[\comp([u,s_i]) \leq \comp(u) + \comp(s_i) - \varepsilon \leq d_\Gamma(\cL(k-1)) + d - \varepsilon = d_\Gamma(\cL(k-1)) + e,\]
as desired.
\end{proof}

\section{Proofs of Theorems~\ref{maintheorem:torellilcs},~\ref{maintheorem:ialcs}, and \ref{maintheorem:iajohnson}}
\label{section:mainproofs}

In this section, we prove Theorems~\ref{maintheorem:torellilcs},~\ref{maintheorem:ialcs}, and \ref{maintheorem:iajohnson}.  The
bounds in our theorems are stronger than what can be obtained from a completely general framework, so we
will need to use some rather special properties of the groups in question.  There are three sections.
First, in \S\ref{section:torellilcs} 
we prove Theorem~\ref{maintheorem:torellilcs} on the lower central series of $\Torelli_g^1$.
Theorem~\ref{maintheorem:torellijohnson} on the Johnson filtration of $\Torelli_g^1$ is a special case of
Theorem~\ref{maintheorem:torellilcs}, so there is no need to prove it separately.  Next, in  
\S\ref{section:ialcs} we prove Theorem~\ref{maintheorem:ialcs} on the lower central series
of $\IA_n$. 
Finally, in \S\ref{section:iajohnson} we prove Theorem~\ref{maintheorem:iajohnson} on the Johnson filtration of $\IA_n$. 

\begin{remark}
The proof of Theorem~\ref{maintheorem:ialcs} in \S\ref{section:ialcs} has the fewest technicalities, so we suggest
reading it first. It can be understood independently without first reading \S\ref{section:torellilcs}.
\end{remark}

\subsection{The lower central series of \texorpdfstring{$\Torelli^1_g$}{I-g,1}}
\label{section:torellilcs}

The goal in this subsection is to prove Theorem~\ref{maintheorem:torellilcs} concerning the lower central series
of $\Torelli_g^1$.

\para{Notation}
The following notation will be in place for the remainder of this subsection.  Fix some $g \geq 3$.
Let $\Gamma = \Mod_g^1$, let $G = \Torelli_g^1$, and let $G(k) = \gamma_k \Torelli_g^1$.  Finally, let
$\cL = \bigoplus \cL(k)$ be the graded real Lie algebra associated to $G(k)$.  Endow $\Gamma$ with the
$[g]$-group structure described in Definition~\ref{definition:ngroupmodg}. Recall that the subgroups 
$G$ and $G(k)$ inherit a $[g]$-group structure,  and the vector spaces $\cL(k)$ inherit a $[g]$-vector space structure.

\para{Basic properties}
Our goal is to apply Corollary~\ref{corollary:heart} to the filtration $G(k)$ of $\Gamma$.  That
corollary has several conditions.  The following lemma verifies the first of them.
Recall that we defined what it means for a $[g]$-group structure to be
transitive and weakly commuting in Definition~\ref{definition:threeproperties}.

\begin{lemma}
\label{lemma:torellibasic}
The $[g]$-group $\Gamma$ is transitive and the $[g]$-groups $\Gamma$ and $G$ are weakly commuting, but not commuting.
\end{lemma}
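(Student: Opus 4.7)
The plan is to verify the three assertions separately, each via an appropriate change of coordinates principle. For transitivity of $\Gamma$, given $I, J \subseteq [g]$ with $\abs{I} = \abs{J}$, property~(i) of Definition~\ref{definition:ngroupmodg} gives $\Sigma_I \cong \Sigma_{\abs{I}}^1 \cong \Sigma_J$ as subsurfaces of $\Sigma_g^1$ with separating boundary. The classical change of coordinates principle for $\Mod_g^1$ produces $f \in \Mod_g^1$ with $f(\Sigma_I) = \Sigma_J$, hence $f \Gamma_I f^{-1} = \Gamma_J$.

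For weak commutativity of $\Gamma$, given disjoint $I,J\subseteq[g]$ we have $\abs{I} + \abs{J} \leq g$, so the complement $\Sigma_g^1 \setminus \Interior(\Sigma_J) \cong \Sigma_{g-\abs{J}}^2$ contains a subsurface $S \cong \Sigma_{\abs{I}}^1$ with separating boundary in $\Sigma_g^1$. By the transitivity just proved, there exists $f \in \Gamma$ with $f^{-1}(\Sigma_I) = S$, so $\Gamma_I^f$ is supported on $S$. Since $S \cap \Sigma_J = \emptyset$, the subgroup $\Gamma_I^f$ commutes elementwise with $\Gamma_J$.

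For weak commutativity of $G$, the same argument works but the conjugating element must lie in $G = \Torelli_g^1$. By the Torelli-equivariant change of coordinates principle (the Torelli analog of the classical one used above, cf.~\cite{JohnsonHomeo}), to produce such an $f$ it suffices to find $S$ satisfying the above topological conditions \emph{and} spanning the same symplectic subspace $V_I := \HH_1(\Sigma_I;\Z) \subseteq \HH_1(\Sigma_g^1;\Z)$ as $\Sigma_I$ itself. The key point is that $V_I \perp V_J$ whenever $I \cap J = \emptyset$: property~(iv) of Definition~\ref{definition:ngroupmodg} applied to disjoint singletons shows that the subsurfaces $\Sigma_{\{i\}}$ for $i \in [g]$ can be realized as pairwise disjoint, hence the $V_{\{i\}}$ are pairwise symplectically orthogonal; combined with property~(iii) and a dimension count, this forces $V_I = \bigoplus_{i \in I} V_{\{i\}}$, so $V_I \perp V_J$ for disjoint $I,J$. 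Inside the symplectic subspace $V_J^{\perp}$, which is realized (modulo boundary cycles) by $\HH_1(\Sigma_g^1 \setminus \Interior\Sigma_J)$, one can then construct $S$ with $\HH_1(S) = V_I$ and $S \cap \Sigma_J = \emptyset$, and the Torelli change of coordinates yields $f \in G$ with $f^{-1}(\Sigma_I) = S$.

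The principal obstacle is this third part: ensuring the conjugating element lies in Torelli. This requires both unpacking the symplectic geometry of the $[g]$-group structure to establish the orthogonality $V_I \perp V_J$ for disjoint $I, J$, and then invoking a relative Torelli change of coordinates inside $\Sigma_g^1 \setminus \Interior\Sigma_J$ to position a subsurface that is simultaneously disjoint from $\Sigma_J$ and in the same Torelli orbit as $\Sigma_I$.
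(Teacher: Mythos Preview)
Your proof is correct and follows essentially the same approach as the paper's: the classical change of coordinates principle for transitivity, and a Torelli change of coordinates (what the paper cites as \cite[Lemma~4.2(ii)]{CP}) to obtain the conjugating element for weak commutativity. The paper is slightly more economical in that it handles $\Gamma$ and $G$ simultaneously---once the conjugating element $\varphi$ is found in $G$, the conclusion that ${\Gamma_J}^{\varphi}$ commutes with $\Gamma_I$ already gives weak commutativity of both $\Gamma$ and $G$, so your separate treatment of $\Gamma$ is subsumed by the Torelli case.
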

\begin{proof}
That $\Gamma$ is transitive is a direct consequence of \cite[Lemma~4.1(i)]{CP}, which
says that for all $1 \leq k \leq g$ the group $\Mod_g^1$ acts transitively 
on isotopy classes of subsurfaces of $\Sigma_g^1$ which are homeomorphic to $\Sigma_k^1$.
We thus must only prove that $\Gamma$ and $G$ are weakly commuting.
Let $I,J\subseteq [g]$ be disjoint.  Recall that in Definition~\ref{definition:ngroupmodg}, we defined
$\Gamma_I$ and $\Gamma_J$ to consist of mapping classes supported on the genus $|I|$ and $|J|$ subsurfaces $\Sigma_I$ and
$\Sigma_J$ constructed by Lemma \ref{lemma:choosesurfaces} and illustrated in Figure \ref{figure:bigsurface}.
As discussed in conclusion \ref{part:disjointconsecutive} of Lemma \ref{lemma:choosesurfaces}, the 
surfaces $\Sigma_I$ and $\Sigma_J$ need not be homotopic to disjoint subsurfaces, so $\Gamma_I$ and $\Gamma_J$
need not commute and thus $\Gamma$ and $G$ are not commuting.  However, we can always find a subsurface
$\Sigma'_J$ which is homeomorphic to
$\Sigma_J$, disjoint from $\Sigma_I$, and satisfies $\HH_1(\Sigma_J)=\HH_1(\Sigma'_J)$ as subspaces of $\HH_1(\Sigma)$.
By \cite[Lemma~4.2(ii)]{CP} this implies that there exists $\varphi \in G$ such that $\varphi(\Sigma'_J)=\Sigma_J$, 
so the subgroup $(\Gamma_J)^{\varphi}=\varphi^{-1}\Gamma_J \varphi$
consists of mapping classes supported on $\Sigma'_J$.  It follows that $(\Gamma_J)^{\varphi}$ commutes with $\Gamma_I$. 
Since $\varphi \in G$, this shows that both $G$ and $\Gamma$ are weakly commuting.
\end{proof}

\para{Generating $\mathbf{G(1)}$}
The second condition in Corollary~\ref{corollary:heart} is that $G(1) = \Torelli_g^1$ is finitely generated.  This
was proved by Johnson \cite{JohnsonFinite}, and stated above as Theorem~\ref{theorem:johnson}\ref{part:johnson:fg}.

\para{Zariski-irreducibility}
The third condition in Corollary~\ref{corollary:heart} is that the action of $\Gamma = \Mod_g^1$ on
each $\cL(k)$ is Zariski-irreducible, which is the content of the following.

\begin{lemma}
\label{lemma:torellizariski}
For all $k \geq 1$, the action of $\Gamma$ on $\cL(k)$ is Zariski-irreducible.
\end{lemma}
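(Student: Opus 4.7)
The plan is to reduce the general case $k \geq 1$ to the case $k=1$, which will then follow from the same algebraic-geometric argument used in Claim~\ref{claim:secondKg}.

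For the reduction, I would invoke Remark~\ref{remark:Lgen1}: since $G(k) = \gamma_k G$ is the lower central series, the associated graded Lie algebra $\cL$ is automatically generated in degree~$1$ by $\cL(1)$. Lemma~\ref{lemma:zariskilie} then says that if the action of $\Gamma$ on $\cL(1)$ is Zariski-irreducible, so is the action on each $\cL(k)$. So it suffices to handle $\cL(1) = (\Torelli_g^1)^{\ab} \otimes \R$.

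For $\cL(1)$, I would use Theorem~\ref{theorem:johnson}\ref{part:johnson:H1}, which gives a $\Mod_g^1$-equivariant isomorphism $\cL(1) \cong \bwedge^3 H$, where $H = \HH_1(\Sigma_g^1; \R)$. The action of $\Gamma = \Mod_g^1$ on $H$ factors through the symplectic representation $\Gamma \to \Sp(H_\Z) \subseteq \Sp(H) \cong \Sp_{2g}(\R)$. Since $\Sp_{2g}(\R)$ is a connected algebraic group, it is irreducible in the Zariski topology, and therefore its subgroup $\Sp(H_\Z)$ (whose Zariski closure is all of $\Sp(H)$) is also irreducible as a topological space in the pullback Zariski topology (using property~\ref{part:subspace} from the proof of Claim~\ref{claim:secondKg}). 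Composing the continuous map $\GL(H) \to \GL(\bwedge^3 H)$ with $\Gamma \to \Sp(H_\Z)$ and applying property~\ref{part:image}, the image of $\Gamma$ in $\GL(\bwedge^3 H) = \GL(\cL(1))$ is irreducible, which is the desired Zariski-irreducibility.

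There is no real obstacle here: the argument is essentially identical to the proof of Claim~\ref{claim:secondKg} in \S\ref{section:Kg}, just packaged through Lemma~\ref{lemma:zariskilie} to handle all $k$ uniformly. The only input specific to this setting is Theorem~\ref{theorem:johnson}\ref{part:johnson:H1}, which identifies $\cL(1)$ as a symplectic representation.
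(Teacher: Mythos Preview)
Your proposal is correct and follows essentially the same approach as the paper: reduce to $k=1$ via Remark~\ref{remark:Lgen1} and Lemma~\ref{lemma:zariskilie}, then use Johnson's identification $\cL(1)\cong\bwedge^3 H$ and the Zariski density of $\Sp(H_\Z)$ in the connected group $\Sp(H)$. The only cosmetic difference is that the paper cites Claim~\ref{claim:secondKg} (which treated the dual $\cL(1)^*=\Hom(\Torelli_g^1,\R)$) and invokes Remark~\ref{remark:dualZariski} to pass from $V^*$ to $V$, whereas you rerun that argument directly for $\cL(1)$ itself.
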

\begin{proof}
Since $G(k)=\gamma_k G$ is the lower central series of $G$, by Remark~\ref{remark:Lgen1} it
suffices to prove that the action on $\cL(1)=(\Torelli_g^1)^{\ab} \otimes \R$ is Zariski-irreducible.  But this has already been proved for the dual representation $\cL(1)^*=\Hom(\Torelli_g^1,\R)$ in Lemma \ref{lemma:zariskimodirred} which suffices according to Remark~\ref{remark:dualZariski}.
\end{proof}

\para{Connectivity bounds}
The fourth condition in Corollary~\ref{corollary:heart} asserts that the graph $X_m(\Gamma)$ must be connected and nontrivial for
some $m \geq \max\Set{$d_{\Gamma}(\cL(k))$}{$1 \leq k < N$}$. The following lemmas will allow
us to verify this.

\begin{lemma}
\label{lemma:torellixm}
For all $m \geq 1$ such that $2m+1 \leq g$, the graph $X_m(\Gamma)$ is connected and nontrivial.
\end{lemma}

\noindent
For the proof of Lemma~\ref{lemma:torellixm}, we need the following fact about generators for $\Gamma = \Mod_g^1$.
Recall that we defined what it means for an element of a $[g]$-group to be good in Definition~\ref{definition:complexity}.

\Figure{figure:redrawngenerators}{RedrawnGenerators}{On the top are the generators for $\Gamma = \Mod_g^1$, drawn
in such a way that it is clear how they interact with the subsurfaces $\Sigma_I$.  On the bottom
we illustrate why $\beta_j$ lies in a surface isotopic to $\Sigma_{j,j+1}$, and thus why the Dehn twist
$T_{\beta_j}$ lies in $\Gamma_{j,j+1}$.}{78}

\begin{lemma}
\label{lemma:modg1gen}
The $[g]$-group $\Gamma$ is generated by good elements of complexity at most $2$.
\end{lemma}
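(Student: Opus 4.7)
The plan is to exhibit a finite generating set of $\Gamma = \Mod_g^1$ consisting of good elements of complexity at most $2$, namely the Humphries generating set $S$ from \eqref{eq:Humphries-generators} that was cited earlier as generating $\Mod_g^1$. First I would isotope the curves depicted in Figure~\ref{figure:bpgraph} so that $\alpha_i$ and $\gamma_i$ each lie inside $\Sigma_{\{i\}}$ while $\beta_i$ lies inside $\Sigma_{\{i,i+1\}}$; this is possible because $\Sigma_{\{i\}}$ is a genus-$1$ subsurface with one boundary containing the $i$-th handle, and $\Sigma_{\{i,i+1\}}$ contains both the $i$-th and $(i+1)$-th handles. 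Consequently $T_{\alpha_i}^{\pm 1}, T_{\gamma_i}^{\pm 1} \in \Gamma_{\{i\}}$ and $T_{\beta_i}^{\pm 1}\in\Gamma_{\{i,i+1\}}$, so each element of $S$ has complexity at most $2$ and its ``natural'' support set is a consecutive subset of $[g]$ of size at most $2$.

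To verify goodness, the central ingredient is property~\ref{part:disjointconsecutive} of Definition~\ref{definition:ngroupmodg}: if $I_0\subseteq[g]$ consists of consecutive integers and $J\subseteq [g]$ is disjoint from $I_0$, then $\Sigma_{I_0}$ is isotopic to a subsurface disjoint from $\Sigma_J$, so every mapping class supported on $\Sigma_{I_0}$ has a representative supported disjoint from $\Sigma_J$ and hence commutes with every element of $\Gamma_J$. For each $s\in S$, let $I_0$ denote its consecutive support set as above. I would establish the following claim: if $s\in\Gamma_I$, then $I_0\subseteq I$. This follows from the geometric description of the subsurfaces $\Sigma_I$ (see \cite[\S4.1, Fig.~2]{CP}): $s$ is a Dehn twist (or inverse) about a specific simple closed curve $c$, and $s\in\Gamma_I$ requires $c$ to be isotopic into $\Sigma_I$, which in turn requires $\Sigma_I$ to contain the handles indexed by $I_0$.

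Granting this claim, goodness is automatic: whenever $s\in\Gamma_I$ and $J$ is disjoint from $I$, the set $J$ is disjoint from $I_0\subseteq I$ as well, so by the ingredient above $s$ commutes with every element of $\Gamma_J$. Combined with the first step and Humphries' generation statement, this proves that $\Gamma$ is generated by the good elements $S$, each of complexity at most $2$.

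The one genuinely geometric step is the subclaim $s\in\Gamma_I \Rightarrow I_0\subseteq I$; once this is in hand, everything else is a direct application of property~\ref{part:disjointconsecutive} from the $[g]$-group construction. I expect this to be the main (albeit routine) obstacle, and it can be handled by inspecting the picture in \cite[\S4.1, Fig.~2]{CP}, noting that the curves in question have well-defined homological or geometric ``footprints'' on the indexed handles which force membership of the corresponding indices in~$I$.
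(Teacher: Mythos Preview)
Your proposal is correct and follows essentially the same approach as the paper: exhibit the Humphries generators, arrange that each lies in $\Gamma_{\{i\}}$ or $\Gamma_{\{i,i+1\}}$, and invoke property~\ref{part:disjointconsecutive} of Definition~\ref{definition:ngroupmodg} to obtain goodness. The paper's proof is terser---it simply asserts that property~\ref{part:disjointconsecutive} makes every element of $\Gamma_{\{i,i+1\}}$ good without isolating your subclaim $s\in\Gamma_I\Rightarrow I_0\subseteq I$---so your version is if anything slightly more careful on this point.
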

\begin{proof}
The key point here is that $\Gamma$ is generated by the union of the subgroups $\Gamma_{\{i,i+1\}}$ for
$1 \leq i \leq g-1$.  To see this, observe that in Figure \ref{figure:redrawngenerators} we have redrawn
Johnson's generating set for $\Gamma = \Mod_g^1$ from \cite[Theorem 1]{JohnsonFinite} (we previously
used this generating set in the proof of Proposition~\ref{prop:BPgraph}, where it is depicted
in Figure \ref{figure:bpgraph}).  This generating set consists of Dehn twists
$T_{\alpha_i}$ and $T_{\beta_j}$ and $T_{\gamma_k}$ with $1 \leq i,k \leq g$ and $1 \leq j \leq g-1$.
Letting $\Sigma_I$ be the surfaces given by Lemma \ref{lemma:choosesurfaces}
and used to define the $[g]$-group structure on $\Gamma$ in Definition \ref{definition:ngroupmodg}, it
is clear from this picture that $T_{\alpha_i} \in \Gamma_{\{i\}}$ and 
$T_{\beta_j} \in \Gamma_{\{j,j+1\}}$ and $T_{\gamma_k} \in \Gamma_{\{k\}}$ (this is slightly
nontrivial for $T_{\beta_j}$, for which we refer the reader to the bottom of this figure).
We remark that an alternate algebraic proof that $\Gamma$ is generated by the union of the subgroups $\Gamma_{\{i,i+1\}}$
is in \cite{McCool}.  Conclusion~\ref{part:disjointconsecutive} from Lemma \ref{lemma:choosesurfaces} implies that any element of $\Gamma_{\{i,i+1\}}$ is good.  Since these elements have complexity at most $2$, this verifies the lemma.
\end{proof}

\begin{proof}[Proof of Lemma~\ref{lemma:torellixm}]
We will use Proposition~\ref{prop:connectivity}.  Let $S \subset \Mod_g^1$ be the generating set given by Lemma~\ref{lemma:modg1gen}.
Set $c=\max\Set{$\comp(s)$}{$s \in S$}$, so $c=2$.  Every element of $S$ is good.
Proposition~\ref{prop:connectivity} thus says that $X_m(\Gamma)$ is connected whenever $2m+c-1 = 2m+1 \leq g$. 
As for the nontriviality of $X_m(\Gamma)$, it follows immediately from the fact
that $\Mod_m^1$ is never normal in $\Mod_g^1$ except when $m=0$ (when $\Mod_m^1 = 1$) and when
$m=g$ (when $\Mod_m^1 = \Mod_g^1$); see Remark \ref{remark:nontrivial}.
\end{proof}

\begin{remarks}
\mbox{}
\begin{compactenum}
\item The commuting graph $X_m(\Gamma) = X_m(\Mod_g^1)$ has a more geometric description.
The conjugates of subgroups $(\Mod_g^1)_I$ with $\abs{I}=m$ are in bijection with isotopy classes of subsurfaces  of $\Sigma$ homeomorphic to $\Sigma_m^1$; such a subsurface is sometimes called an $m$-handle. Two such subgroups commute if and only if the corresponding $m$-handles are disjoint. Accordingly the graph $X_m(\Mod_g^1)$ forms the $1$-skeleton of the ``$m$-handle complex'', whose vertices are $m$-handles and whose simplices consist of disjoint $m$-handles.  For $m=1$, this first appeared in
\cite{PutmanSamLinear}, where it was proved to be $(g-3)/2$-connected (this was deduced from a similar connectivity
result for a slightly different complex by Hatcher--Vogtmann \cite{HatcherVogtmannTethers}).
The fact that the $m$-handle complex is connected for $g\geq 2m+1$ must be well known, although we are not aware of an explicit reference in the literature.  We remark that after a first version of this paper was circulated, the third
author proved that the $m$-handle complex is actually
\[\frac{g-(2m+1)}{m+1}-\text{connected};\]
see \cite[Theorem D]{PutmanPartial}.
\item We also see that the bound $g\geq 2m+1$ is sharp. For $g<2m$ there are no edges in $X_m(\Mod_g^1)$, since there cannot be two disjoint $m$-handles. For $g=2m$, an $m$-handle determines a splitting of $\HH_1(\Sigma_g^1)$ into two rank-$m$ symplectic subspaces; disjoint $m$-handles determine the same splitting, so this invariant is constant on components of $X_m(\Mod_g^1)$.
\item Finally, we remark that the  genus-1 BP graph appearing in \S\ref{section:Kg} can be thought of as the ``1.5-handle complex'', and note that we proved there that this is connected whenever $4=2(1.5)+1\leq g$, matching Lemma~\ref{lemma:torellixm}.
\end{compactenum}
\end{remarks}

\para{Generation for $\cL(k)$} Recall that a general upper bound on the quantities $d_\Gamma(\cL(k))$ was obtained in 
Proposition~\ref{prop:generalbound}. We will now use this proposition to obtain more specific bounds in the case
$\Gamma=\Mod_g^1$ and $G=\Torelli_g^1$:

\begin{proposition}
\label{prop:torellidgamma}
We have $d_{\Gamma}(\cL(1)) = 2$ and $d_\Gamma(\cL(k))\leq k$ for $k \geq 2$.
\end{proposition}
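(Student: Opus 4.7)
The proposition has two parts, which I would prove separately.

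First, $d_\Gamma(\cL(1)) = 2$: The upper bound follows from Theorem~\ref{theorem:johnson}\ref{part:johnson:BP}, since $\Torelli_g^1$ is generated by genus-$1$ BP maps, all of which are $\Gamma$-conjugate (as used in Proposition~\ref{prop:BPgraph}) to one supported inside $\Sigma_{[2]} \iso \Sigma_2^1$ (such a BP exists since $\Sigma_2^1$ contains genus-$1$ BPs). Hence $\cL(1) = (\Torelli_g^1)^{\ab}\otimes\R$ is $\Gamma$-spanned by $\cL(1)_{[2]}$, giving $d_\Gamma(\cL(1)) \leq 2$. For the lower bound, $\Mod(\Sigma_1^1)\iso\SL_2(\Z)$ acts faithfully on $\HH_1(\Sigma_1^1)$, which forces $\Torelli(\Sigma_1^1) = 1$ and thus $\cL(1)_{\{i\}} = 0$ for any singleton $\{i\}$.

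Second, $d_\Gamma(\cL(k))\leq k$ for $k\geq 2$: I would argue by induction on $k$. A direct application of Proposition~\ref{prop:generalbound} yields only $d_\Gamma(\cL(k))\leq 2k$, since $d(\cL(1))=3$ (for instance, $a_1\wedge a_2\wedge a_3 \in \cL(1)_{\{1,2,3\}}$ is not a sum of elements of $\cL(1)_I$ with $|I|=2$). The key observation driving the improvement is: for $w \in \cL(k-1)_J$ with $|J|=k-1$ and $s \in \cL(1)_I$ with $|I|=2$, the bracket $[w,s]\in\cL(k)_{I\cup J}$ either vanishes (by the weak commuting property Lemma~\ref{lemma:torellibasic}, when $I\cap J=\emptyset$) or has $|I\cup J|\leq k$. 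So such brackets have complexity at most $k$. Combining $\cL(k)=[\cL(k-1),\cL(1)]$ with the inductive hypothesis $d_\Gamma(\cL(k-1))\leq k-1$ and the first part should give the desired bound.

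The main obstacle is that the $\Gamma$-actions used to decompose $w$ (by induction) and $s$ (by the first part) need not coincide: terms of the form $[w^\beta, s^\alpha]$ with $\alpha\neq\beta$ cannot naively be rewritten as a single $\Gamma$-conjugate of a complexity-controlled bracket. I would resolve this by exploiting the $\Sp$-decomposition of $\cL(1) = \bwedge^3 H$: the subspace $\omega\wedge H$ (where $\omega\in\bwedge^2 H$ is the symplectic form) is spanned by complexity-$\leq 2$ elements of $\cL(1)$ without any $\Gamma$-action needed (since $\omega\wedge v = \sum_k a_k\wedge b_k\wedge v$ expresses this explicitly), so brackets against this part reach complexity $\leq k$ directly via the key observation. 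The primitive part (the kernel of the contraction $\bwedge^3 H \to H$) produces brackets of naive complexity $k+1$ and is the crux of the argument; I expect this remaining case to require either a quadratic relation from Hain's infinitesimal presentation of the Torelli Lie algebra, or a direct topological argument showing that iterated commutators of genus-$1$ BP maps supported inside $\Sigma_{[k]}$ land in $\cL(k)_{[k]}$ and $\Sp$-span all of $\cL(k)$.
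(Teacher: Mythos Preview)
Your argument for $d_\Gamma(\cL(1))=2$ is fine and matches the paper. The rest has two genuine gaps.

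\textbf{The base case $k=2$ is missing.} Your inductive hypothesis is $d_\Gamma(\cL(k-1))\leq k-1$, but for $k=2$ this reads $d_\Gamma(\cL(1))\leq 1$, which you just proved is false. If instead you feed in the true value $d_\Gamma(\cL(1))=2$, your key observation only yields $d_\Gamma(\cL(2))\leq 3$, not $\leq 2$. The paper handles $k=2$ by a completely separate argument: Johnson proved that $\JohnsonKer_g^1$ is generated by separating twists of genus $1$ and $2$, and that $\gamma_2\Torelli_g^1$ has index a power of $2$ in $\JohnsonKer_g^1$. The \emph{squares} of those twists therefore lie in $\gamma_2\Torelli_g^1$, have complexity $\leq 2$, and a torsion argument shows their $\Gamma$-conjugates span $\cL(2)$. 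Nothing in your proposal approaches this.

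\textbf{The inductive step for $k\geq 3$ is incomplete, and your proposed fix goes in the wrong direction.} You correctly identify that using $d_\Gamma(\cL(1))=2$ creates a clash of conjugating elements. The paper avoids this entirely: it follows Proposition~\ref{prop:generalbound} and uses $d(\cL(1))=3$, not $d_\Gamma(\cL(1))=2$, so only one $\gamma$ is ever in play. This reduces the problem to bounding $\comp([u,s])$ for $u\in\cL(k-1)_I$ with $|I|\leq k-1$ and $s$ a standard basis element of $\bwedge^3 H$ with $\comp(s)\leq 3$ and $I\cap J\neq\emptyset$. The only bad case is $|J|=3$, $|I\cap J|=1$, say $s=x\wedge y\wedge z$ with $x\in V_I$ and $y,z\in V_{\{t,u\}}$, $t,u\notin I$. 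The trick is then elementary: set $w=[u,\,x\wedge a_t\wedge b_t]$, which has complexity $\leq k$, and observe that explicit symplectic automorphisms fixing $V_I$ pointwise (hence fixing $u$) carry $a_t\wedge b_t$ to each of $a_t\wedge a_u$, $a_t\wedge b_u$, $b_t\wedge a_u$, $b_t\wedge b_u$. Thus $[u,s]$ lies in the $\Gamma$-span of $w$. No Hain relations or topological input are needed; your instinct about $\omega\wedge H$ is pointing at the right element $x\wedge a_t\wedge b_t$, but the missing idea is to act by $\Sp$-elements that fix $V_I$ rather than to decompose $\cL(1)$ into irreducibles.
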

\begin{proof} Set $V_{\Z} = \HH_1(\Sigma_g^1;\Z)$ and $V = \HH_1(\Sigma_g^1;\R)$.
For each $1\leq i\leq g$, let $\{a_i,b_i\}$ be a symplectic basic for $\HH_1(\Sigma_{\{i\}};\Z)$, so
$\mathcal B=\{a_i,b_i\}_{i=1}^g$ is a symplectic basis for $V_{\Z}$.  For each $I\subseteq [n]$ let
\[V_{I,\Z}=\bigoplus_{i\in I}(\Z a_i\oplus \Z b_i)\qquad\text{ and }\qquad V_I=V_{I,\Z}\otimes \R=\bigoplus_{i\in I}(\R a_i\oplus \R b_i).\]
Theorem~\ref{theorem:johnson}\ref{part:johnson:H1} states that $\cL(1) = G^{\ab} \otimes \R \iso\bwedge^3 V$.
Moreover, it follows from Johnson's work in \cite{JohnsonAbel} that the image of $G_I$ in $G^{\ab}\otimes \R$ is equal to $\bwedge^3 V_{I,\Z}$ (even if $\abs{I}\leq 3$), so $\cL(1)_I=\bwedge^3 V_I$. In particular, this shows that $d(\cL(1))\leq 3$ since each basis element of $\bwedge^3 V$ involves three elements of $\{a_1,b_1,\ldots,a_g,b_g\}$, and thus lies in $\bwedge^3 V_I$ for some $I$ with $\abs{I}\leq 3$. It is easy to see that elements of complexity at most $2$ cannot span $\bwedge^3 V$, so in fact  $d(\cL(1))=3$. We now tackle $d_\Gamma(\cL(k))$ for different $k$ in turn.

$\mathbf{k=1}$:\quad Consider the element $a_1\wedge a_2\wedge b_2\in \cL(1)$, which has  complexity~2 since it belongs to $\cL(1)_{\{1,2\}}$. The $\Sp_{2g}(\Z)$-orbit of this element spans $\bwedge^3 V$. This can be seen either algebraically, since $\bwedge^3 V$ contains only two irreducible $\Sp_{2g}(\Z)$-representations and this element is not contained in either, or via Theorem~\ref{theorem:johnson}\ref{part:johnson:BP} (this is the image of a genus-$1$ BP map, and Theorem~\ref{theorem:johnson}\ref{part:johnson:BP} states that $\Torelli_g^1$ is generated by the $\Gamma$-orbit of such an element).
Since $\cL(1)$ is spanned by the $\Gamma$-orbit of this complexity-2 element, we conclude that $d_\Gamma(\cL(1))=2$
(we cannot have $d_\Gamma(\cL(1))\leq 1$ since $\cL(1)_I=0$ if $\abs{I}=1$).

$\mathbf{k=2}$:\quad Next, we prove that $d_\Gamma(\cL(2))\leq 2$ using a rather different argument, resting on two important results of Johnson that we have not used thus far. Johnson~\cite[Theorem~1]{JohnsonHomeo} proved that for $g\geq 3$ the Johnson kernel
$\JohnsonKer_g^1$ is generated by the set $S$ of separating twists of genus $1$ and $2$, that is,
Dehn twists about separating curves that cut off subsurfaces homeomorphic to either $\Sigma_1^1$ or $\Sigma_2^1$.
Any separating curve of genus $1$ or $2$ is in the $\Mod_g^1$-orbit of the boundary of $\Sigma_{\{1\}}$ or $\Sigma_{\{1,2\}}$ respectively.  Therefore, $\JohnsonKer_g^1$ is generated by the $\Mod_g^1$-conjugates of $\JohnsonKer_g^1\cap \Gamma_{\{1,2\}}$.
Johnson~\cite{JohnsonAbel} also proved that $\gamma_2 G$ is a finite index subgroup of $\JohnsonKer_g^1$ 
and that $\JohnsonKer_g^1/\gamma_2 G\iso (\Z/2)^r$ for some $r \geq 1$.  Let $S'=\Set{$s^2$}{$s\in S$}$
be the set of squares of separating twists of genus $1$ and $2$ and let $H\subseteq \gamma_2 G$ be the subgroup generated by $S'$. The group $H$ is normal in $\Gamma$ since $S$ (and hence $S'$) is closed under conjugation in $\Gamma$.
Note that $H$ need not have finite index in $ \JohnsonKer_g^1$, but the quotient $\JohnsonKer_g^1/H$ is generated by \emph{torsion} elements (namely the order-2 elements that are the image of $S$).

Now consider the image of $H\subseteq \gamma_2 G\subseteq \JohnsonKer_g^1$ under the natural projection $\rho\colon \JohnsonKer_g^1\to \JohnsonKer_g^1/\gamma_3 G$. Since $G/\gamma_3 G$ is finitely generated nilpotent, the same is true of its subgroup $\rho(\JohnsonKer_g^1)$. Therefore its quotient $\rho(\JohnsonKer_g^1)/\rho(H)$ is finitely generated, nilpotent, and  generated by torsion elements, and thus is finite.
This means that $\rho(H)$ is finite index in $\rho(\JohnsonKer_g^1)$, and therefore in the intermediate subgroup $\rho(\gamma_2 G)=\gamma_2 G/\gamma_3 G$. Tensoring with $\R$, we conclude that the image of $H$ spans all of $(\gamma_2 G/\gamma_3 G)\otimes\R=\cL(2)$. Since $H$ is generated by $\Gamma$-conjugates of elements of $G_{\{1,2\}}$, we conclude that $d_\Gamma(\cL(2))\leq 2$.

$\mathbf{k\geq 3}$:\quad To conclude the proof, we will modify the proof of Proposition~\ref{prop:generalbound} to show that 
\begin{equation}
\label{eq:dGamma-ineq}
d_\Gamma(\cL(k))\leq d_\Gamma(\cL(k-1))+1
\end{equation}
for $k \geq 3$; the bound $d_\Gamma(\cL(k))\leq k$ then follows by induction. Fix $k\geq 3$. Recall from above that $d(\cL(1))=3$. Since $G$ is weakly commuting, the proof of
Proposition~\ref{prop:generalbound} (specifically equation~\eqref{eq:v-ugamma-s}) shows that $\cL(k)$ is generated by the $\Gamma$-orbits of elements of the form
$[v,s]$ where $v\in \cL(k-1)_I$ and $s\in \cL(1)_J$ for some $I,J\subseteq [g]$ with $\abs{I}\leq d_\Gamma(\cL(k-1))$ and $\abs{J}\leq 3$ and $I\cap J\neq\emptyset$. We may assume that $s$ is a standard basis element of $\cL(1)\iso \bwedge^3 V$ and that $I$ and $J$ are as small as possible.

Note that 
\[\comp([v,s])\leq \abs{I\cup J}\leq d_\Gamma(\cL(k-1))+1\] 
unless $\abs{J}=3$ and $\abs{I\cap J}=1$, so assume that the latter is the case. Let
$r$ be the unique element of $I\cap J$ and $t$ and $u$ the other two elements of $J$. Since $\comp(s)=3$ we must have $s=x\wedge y\wedge z$ where $x=a_r$ or $b_r$, $y=a_{t}$ or $b_t$, and $z=a_u$ or $b_u$.
Set $w=[v,x\wedge a_t\wedge b_t]$. Since $x\wedge a_t\wedge b_t\in \cL(1)_{\{r,t\}}$ we have $w\in \cL(k)_{I\cup \{r,t\}}=\cL(k)_{I\cup \{t\}}$, so $\comp(w)\leq d_\Gamma(\cL(k-1))+1$. Using the action of $\Sp_{2g}(\Z)$ we will show that the $\Gamma$-orbit of $w$ contains
$[v,x\wedge a_t\wedge a_u]$ and $[v,x\wedge a_t\wedge b_u]$ and $[v,x\wedge b_t\wedge a_u]$ and $[v,x\wedge b_t\wedge b_u]$.
Since $[v,s]$ must be equal to one of these, this will finish the proof of \eqref{eq:dGamma-ineq}.%\pagebreak

Consider the symplectic automorphisms $\sigma_i$ for $i\in [g]$ and $\tau_{ij}$ for $i\neq j\in [g]$ of $V_\Z$ defined as follows (all basis
elements whose image is not specified are fixed):
\[\sigma_i\colon\begin{cases}a_i\mapsto b_{i}\\
b_i\mapsto -a_{i}\end{cases}
\qquad\qquad\qquad
\tau_{ij}\colon \begin{cases}b_i\mapsto b_i+ a_j\\
b_j\mapsto b_j+a_i\end{cases}.\]
Note that
\begin{align*}
\tau_{tu}(a_t\wedge b_t)&=a_t\wedge (b_t+a_u)=
a_t\wedge b_t+a_t\wedge a_u\\
\sigma_t(a_t\wedge a_u)&=b_t\wedge a_u\\
\sigma_u(a_t\wedge a_u)&=a_t\wedge b_u\\
\sigma_t(a_t\wedge b_u)&=b_t\wedge b_u.
\end{align*}
This shows that the span of the orbit of $a_t\wedge b_t\in V\wedge V$ under the subgroup generated by $\{\sigma_t, \sigma_u,\tau_{tu}\}$ contains $a_t\wedge a_u$ and $b_t\wedge a_u$ and
$a_u\wedge b_t$ and $a_u\wedge b_u$.
By construction $\sigma_t$ and $\sigma_u$ and $\tau_{tu}$ fix $V_I$; this implies that they fix $v\in \cL(k-1)_I$, since we may lift these automorphisms to elements of $\Gamma$ that fix every element of $\Gamma_I$. They also fix $x\in V$. Therefore applying the computations above to
$w=[v,x\wedge a_t\wedge b_t]$ shows that the $\Gamma$-orbit of $w$ contains the claimed elements; for example, $\tau_{tu}(w)-w=[v,x\wedge a_t\wedge a_u]$, and so on.
\end{proof}

\para{Putting it all together}
All the pieces are now in place to prove Theorem~\ref{maintheorem:torellilcs}.

\begin{proof}[Proof of Theorem~\ref{maintheorem:torellilcs}]
The notation is as above.  As was established in \S \ref{section:outlineproof}, we must
prove that $G(k)$ is finitely generated for $k \geq 3$ and $g \geq 2k-1$, or equivalently when
$3\leq k \leq \frac{g+1}{2}$.
We will apply Corollary~\ref{corollary:heart} with $N = \lfloor\frac{g+1}{2}\rfloor$and $m=N-1=\lfloor\frac{g-1}{2}\rfloor$. 
This theorem has four hypotheses:
\begin{compactitem}
\item The $[g]$-group $\Gamma$ must be transitive, which is one of the conclusions of Lemma~\ref{lemma:torellibasic}.
\item The group $G$ must be finitely generated, which is Theorem~\ref{theorem:johnson}\ref{part:johnson:fg}.
\item The action of $\Gamma$ on each $\cL(k)$ must be Zariski-irreducible, which is
Lemma~\ref{lemma:torellizariski}.
\item The graph $X_m(\Gamma)$ must be connected and nontrivial, and we must have
\begin{equation}
\label{eqn:modmbound}
m \geq \max\Set{$d_{\Gamma}(\cL(k))$}{$1 \leq k < N$}.
\end{equation}
To see that $X_m(\Gamma)$ is connected and nontrivial, it is enough to verify the two hypotheses
of Lemma~\ref{lemma:torellixm}.  The first is that $m \geq 1$; indeed, since $g \geq 2k-1 \geq 5$, we have
\[m=\lfloor\frac{g-1}{2}\rfloor\geq 2 \geq 1.\]
The second is that $2m+1 \leq g$; indeed,
\[\textstyle 2m+1=2\big\lfloor\frac{g-1}{2}\big\rfloor+1\leq 2\cdot\frac{g-1}{2}+ 1 = g.\]
As for \eqref{eqn:modmbound},
Proposition~\ref{prop:torellidgamma} says that
$d_{\Gamma}(\cL(1)) = 2$ and that $d_{\Gamma}(\cL(k)) \leq k$ for $k \geq 2$, so since $N-1\geq 2$ we have 
\[\max\Set{$d_{\Gamma}(\cL(k))$}{$k\leq N-1$} \leq N-1=m,\]
as desired.
\end{compactitem}
Applying Corollary~\ref{corollary:heart}, we conclude that $G(k)$ is finitely generated for $1 \leq k \leq N$.
\end{proof}

\subsection{The lower central series of \texorpdfstring{$\AutFn$}{AutFn}}
\label{section:ialcs}

The goal in this section is to prove Theorem~\ref{maintheorem:ialcs} concerning the lower central series
of $\IA_n$.

\para{Setup}
Recall that in Definition~\ref{definition:ngroupautfn} we defined an $[n]$-group structure on $\AutFn$.  There is a minor 
technical problem that will prevent us from working with $\AutFn$ directly.  To explain this,
consider the map $\AutFn \rightarrow \GL_n(\Z)$ arising from the action of $\AutFn$ on $F_n^{\ab} = \Z^n$.
This map is surjective, and the Zariski closure of $\GL_n(\Z)$ in $\GL_n(\R)$ is the group $\SL^{\pm}_n(\R)$
of matrices whose determinant is $\pm 1$.  The group $\SL^{\pm}_n(\R)$ is not connected, so the pullback of
the Zariski topology on $\GL_n(\Z)$ to $\AutFn$ does {\em not} make $\AutFn$ into an irreducible space.
To correct this, we will instead work with the group $\SAutFn$ consisting of elements of $\AutFn$ that
act on $F_n^{\ab}$ with determinant $1$.  Since $\SAutFn$ is a subgroup of $\AutFn$, it inherits an
$[n]$-group structure.

\para{Notation}
The following notation will be in place for the remainder of this section.  Fix some $n \geq 2$.
Let $\Gamma = \SAutFn$, let $G = \IA_n$, and let $G(k) = \gamma_k \IA_n$.  Finally, let
$\cL = \bigoplus \cL(k)$ be the graded real Lie algebra associated to $G(k)$.  The groups $\Gamma$ and
$G$ and $G(k)$ are endowed with the $[n]$-group structure coming from the $[n]$-group structure on $\AutFn$, and
the vector spaces $\cL(k)$ is endowed with the induced $[n]$-vector space structure.

\para{Basic properties}
Our goal is to apply Corollary~\ref{corollary:heart} to the filtration $G(k)$ of $\Gamma$.  That
corollary has several conditions.  The following lemma verifies the first of them.  Recall
that the we defined what it means for an $[n]$-group to be commuting and transitive in
Definition~\ref{definition:threeproperties}.

\begin{lemma}
\label{lemma:sautfnbasic}
The $[n]$-group $\Gamma = \SAutFn$ is commuting and transitive.
\end{lemma}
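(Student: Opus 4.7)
The commuting claim requires essentially no new work. Remark~\ref{remark:basicproperties} records that $\AutFn$ is a commuting $[n]$-group: this is immediate from Definition~\ref{definition:ngroupautfn}, since for disjoint $I$ and $J$, any element of $(\AutFn)_I$ sends each $x_i$ ($i \in I$) into $F_I$ and fixes every $x_j$ ($j \in J$), while any element of $(\AutFn)_J$ fixes $F_I$ pointwise and sends each $x_j$ ($j \in J$) into $F_J$; computing on generators shows such a pair commutes. Since $\SAutFn$ is a subgroup of $\AutFn$ and commutativity of an $[n]$-group passes to subgroups (the remark following Definition~\ref{definition:threeproperties}), the subgroup structure $\Gamma_I = \SAutFn \cap (\AutFn)_I$ makes $\Gamma$ commuting.

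For transitivity, given $I,J \subseteq [n]$ with $\abs{I}=\abs{J}$, pick a permutation $\sigma \in S_n$ with $\sigma(I)=J$. The permutation automorphism $\phi_\sigma \in \AutFn$ defined by $\phi_\sigma(x_i)=x_{\sigma(i)}$ conjugates $(\AutFn)_I$ to $(\AutFn)_J$; if $\sigma$ is even, then $\phi_\sigma \in \SAutFn$, and on intersecting with $\SAutFn$ we get that $\phi_\sigma$ conjugates $\Gamma_I$ to $\Gamma_J$. If $\sigma$ is odd (so $\phi_\sigma \notin \SAutFn$), the plan is to adjust by some $\tau \in \AutFn$ of determinant $-1$ on $F_n^{\ab}$ that conjugates $(\AutFn)_I$ to itself, so that $\phi_\sigma\tau \in \SAutFn$ still conjugates $(\AutFn)_I$ to $(\AutFn)_J$ and hence $\Gamma_I$ to $\Gamma_J$.

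The natural choice is the inversion $\tau(x_1)=x_1^{-1}$, $\tau(x_j)=x_j$ for $j\neq 1$, which has determinant $-1$ on $F_n^{\ab}$. The only point needing verification is that conjugation by $\tau$ preserves $(\AutFn)_I$, which splits into two cases: if $1 \notin I$ then $\tau$ fixes $F_I$ pointwise and fixes every $x_i$ with $i \in I$, so conjugation by $\tau$ sends $(\AutFn)_I$ into itself by inspection on generators; if $1 \in I$ then $\tau$ still preserves $F_I$ setwise and fixes each $x_j$ with $j \notin I$, so for $f \in (\AutFn)_I$ the conjugate $\tau^{-1}f\tau$ sends each $x_i$ ($i \in I$) into $F_I$ and fixes each $x_j$ ($j \notin I$). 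This case check is routine and is the only real computation; there is no substantive obstacle in the proof.
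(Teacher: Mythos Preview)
Your proof is correct and follows essentially the same approach as the paper. The paper packages the transitivity argument slightly more compactly by working with the full signed permutation group $S_n^\pm \subset \AutFn$ (automorphisms preserving $\{x_1^{\pm 1},\ldots,x_n^{\pm 1}\}$) and observing that $S_n^\pm \cap \SAutFn$ still surjects onto $S_n$, but your explicit choice of the single inversion $\tau$ to correct the determinant is exactly the content of that observation.
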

\begin{proof}
We have already noted in Remark~\ref{remark:basicproperties} that the $[n]$-group $\AutFn$ is commuting, 
so the same is true of its subgroup $\SAutFn$.  To see that it is transitive, consider the
subgroup of $\AutFn$ preserving the set $\{x_1,x_1^{-1},\ldots,x_n,x_n^{-1}\}$, which we identify with the signed permutation group $S_n^\pm$.  If 
$\widetilde{\sigma}\in S_n^\pm$ projects to $\sigma\in S_n$, then from the definition of $\AutFn_I$ we see that that
$\widetilde{\sigma}$ conjugates $\AutFn_{\sigma(I)}$ to $\AutFn_{I}$
and hence conjugates $\SAutFn_{\sigma(I)}$ to $\SAutFn_{I}$. Since the index-2 subgroup $S_n^\pm\cap \SAutFn$
of $S_n^\pm$ surjects onto $S_n$, it follows that $\SAutFn$ is a transitive $[n]$-group.
\end{proof}

\para{Generating $\mathbf{G(1)}$}
The second condition in Corollary~\ref{corollary:heart} is that $G(1) = \IA_n$ is finitely generated.  This
was proved by Magnus.  For later use, we will actually give an explicit generating set.  Let
$\{x_1,\ldots,x_n\}$ be the standard basis for $F_n$.
For distinct $1 \leq i,j \leq n$, define $C_{ij} \in \IA_n$ via the formula
\[C_{ij}(x_{\ell}) = \begin{cases}
x_j^{-1} x_{\ell} x_j & \text{if $\ell=i$},\\
x_{\ell} & \text{if $\ell \neq i$}.\end{cases}\]
Also, for distinct $1 \leq i,j,k \leq n$ define $M_{ijk} \in \IA_n$ via the formula
\[M_{ijk}(x_{\ell}) = \begin{cases}
x_{\ell} [x_j, x_k] & \text{if $\ell = i$},\\
x_{\ell} & \text{if $\ell \neq i$}.\end{cases}\]
Magnus (\cite{MagnusGenerators}; see
\cite{BestvinaBuxMargalit} and \cite{DayPutmanCurve} for modern accounts) proved the following.

\begin{theorem}
\label{theorem:magnus}
For $n \geq 2$, the group $\IA_n$ is $\SAutFn$-normally generated by $C_{12}$ and
is generated by the finite set of all $C_{ij}$ and $M_{ijk}$.
\end{theorem}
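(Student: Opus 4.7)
The plan is to follow Magnus's original strategy, which combines a finite generating set for $\AutFn$ with a finite presentation of $\GL_n(\Z)$. Under the surjection $\pi\colon\AutFn\twoheadrightarrow\GL_n(\Z)$, Nielsen's classical generating set for $\AutFn$ (signed permutations of the basis together with the Nielsen transformations $E_{ij}\colon x_i\mapsto x_ix_j$) projects to a generating set for $\GL_n(\Z)$ consisting of signed permutation matrices and elementary matrices, which satisfies a finite Steinberg-type presentation. Take such a presentation of $\GL_n(\Z)$, and for each relator $r$ lift it word-by-word to an element $\widetilde r\in\AutFn$ with $\pi(\widetilde r)=1$, i.e.\ $\widetilde r\in\IA_n$. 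The standard argument that the kernel of a presented surjection is normally generated by the lifted relators then shows that $\IA_n$ is normally generated in $\AutFn$ by the finite set of $\widetilde r$.

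The next step is to express each $\widetilde r$ as a product of $C_{ij}$, $M_{ijk}$, and their conjugates. Some Steinberg-type relations already hold in the free group and lift to the identity (e.g.\ one checks directly that $[E_{ij},E_{jk}]=E_{ik}$ in $\AutFn$); the nontrivial lifts correspond to relators that hold in $\GL_n(\Z)$ only modulo the deeper commutator structure of $F_n$, and a direct computation shows that each such $\widetilde r$ acts on some $x_\ell$ by multiplication by a product of commutators $[x_p,x_q]$ or by conjugation by some $x_p$. Since these are precisely the actions produced by $M_{ijk}$ and $C_{ij}$, careful bookkeeping expresses each $\widetilde r$ as a product of Magnus generators and their conjugates. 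Combined with the observation that conjugation by Nielsen generators preserves the subgroup generated by $\{C_{ij}\}\cup\{M_{ijk}\}$ via further explicit commutator identities, this upgrades normal generation by the $\widetilde r$ to ordinary generation by the Magnus generators.

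For the $\SAutFn$-normal generation by $C_{12}$: the signed-permutation subgroup $S_n^\pm\cap\SAutFn$ of $\SAutFn$ acts transitively on ordered pairs of distinct basis elements (this is easy for $n\geq 3$; $n=2$ requires a small separate verification using inner automorphisms), hence conjugates $C_{12}$ to every $C_{ij}^{\pm 1}$. To place each $M_{ijk}$ inside the $\SAutFn$-normal closure of $C_{12}$, one uses explicit commutator identities among the $C$'s whose first Johnson image matches that of $M_{ijk}$, combined with an induction down the lower central series of $F_n$ to cancel the difference (which lies in deeper stages of the Johnson filtration). The main obstacle throughout is the explicit bookkeeping in the second step, which is tedious but elementary; the modern accounts in \cite{BestvinaBuxMargalit} and \cite{DayPutmanCurve} streamline it by having $\AutFn$ act on carefully chosen simplicial complexes and reducing the question to a connectivity argument combined with a simpler generation statement for point stabilizers.
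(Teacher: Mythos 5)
First, note that the paper does not prove Theorem~\ref{theorem:magnus} at all: it is quoted as a classical result of Magnus, with \cite{BestvinaBuxMargalit} and \cite{DayPutmanCurve} cited for modern accounts. So there is no internal proof to compare against; your proposal has to stand on its own. For the statement that $\IA_n$ is generated by the $C_{ij}$ and $M_{ijk}$, your outline (lift a finite presentation of $\GL_n(\Z)$, observe that the kernel of a presented surjection is normally generated by lifted relators, rewrite each lifted relator in terms of the Magnus generators, and check that the set of Magnus generators is stable under conjugation by Nielsen generators modulo the subgroup they generate) is indeed Magnus's strategy, and the outline is sound even though all of the actual content sits in the deferred ``bookkeeping.''

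The genuine gap is in your argument for $\SAutFn$-normal generation by $C_{12}$. Conjugating $C_{12}$ by signed permutations does give all $C_{ij}^{\pm 1}$, but your plan for placing $M_{ijk}$ in the normal closure $N$ of $C_{12}$ --- match its image in $\IA_n^{\ab}$ (equivalently its first Johnson invariant) by a word in the $C$'s, then ``induct down the lower central series of $F_n$ to cancel the difference'' --- does not work. That induction never terminates: at each stage the error term merely moves one step deeper into the Johnson filtration $\JIA_n(k)$, and the fact that $\bigcap_k \JIA_n(k) = 1$ does not let you conclude membership in $N$, because a subgroup need not be closed in the topology defined by this filtration. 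Worse, making the induction terminate would require knowing in advance that $N$ contains some term $\JIA_n(k_0)$, which is essentially the conclusion you are trying to prove (and is closely tied to the Andreadakis-type questions discussed in the paper's introduction). The correct argument, as in \cite{BestvinaBuxMargalit} and \cite{DayPutmanCurve}, is a \emph{finite, explicit} identity exhibiting each $M_{ijk}$ as a product of finitely many $\SAutFn$-conjugates of $C_{ab}^{\pm 1}$ (for instance, conjugates of $C_{ij}$ by Nielsen transvections differ from $C_{ij}$ by elements that, combined with commutators of the $C$'s, produce $M_{ijk}$ exactly, not just modulo deeper filtration terms). Without such an identity, the normal-generation half of the theorem is not established.
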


\para{Zariski-irreducibility}
The third condition in Corollary~\ref{corollary:heart} is that the action of $\Gamma = \SAutFn$ on
each $\cL(k)$ is Zariski-irreducible, which is the content of the following.

\begin{lemma}
\label{lemma:sautfnzariski}
For all $k \geq 1$, the action of $\Gamma$ on $\cL(k)$ is Zariski-irreducible.
\end{lemma}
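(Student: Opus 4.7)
The plan is to mirror the argument of Claim~\ref{claim:secondKg}.  Since $G(k) = \gamma_k \IA_n$ is the lower central series of $\IA_n$, the associated graded Lie algebra $\cL$ is generated by $\cL(1)$, so Remark~\ref{remark:Lgen1} (via Lemma~\ref{lemma:zariskilie}) reduces the problem to showing that the $\Gamma$-action on $\cL(1) = (\IA_n)^{\ab} \otimes \R$ is Zariski-irreducible.  Since inner automorphisms act trivially on an abelianization, the conjugation action of $\Gamma = \SAutFn$ on $(\IA_n)^{\ab}$ factors through $\SAutFn/\IA_n \iso \SL_n(\Z)$.

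The key ingredient I would use is the theorem of Cohen--Pakianathan, Farb, and Kawazumi (proved independently by each), which identifies $(\IA_n)^{\ab}$ as an $\SL_n(\Z)$-module with $H^{*} \otimes \bwedge^{2} H$, where $H \iso \Z^{n}$ is the standard module.  This identification is precisely the $\IA_n$-analog of Theorem~\ref{theorem:johnson}\ref{part:johnson:H1} used in Claim~\ref{claim:secondKg}, and its important consequence here is that the representation $\SL_n(\Z) \to \GL(\cL(1))$ is the restriction of an algebraic (hence Zariski-continuous) morphism $\rho \colon \SL_n(\R) \to \GL(\cL(1))$.

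With this in hand, the density argument proceeds exactly as in Claim~\ref{claim:secondKg}.  The image of $\Gamma$ in $\GL(\cL(1))$ equals $\rho(\SL_n(\Z))$.  It is classical (a special case of the Borel density theorem) that $\SL_n(\Z)$ is Zariski-dense in $\SL_n(\R)$, and $\SL_n(\R)$ is a connected algebraic group and therefore irreducible in the Zariski topology (cf.\ \cite[Theorem III.2.1]{CarterSegalMacdonald}).  Applying the three properties of irreducible spaces recorded after the statement of Claim~\ref{claim:secondKg}, the image $\rho(\SL_n(\R))$ is irreducible, and since $\rho(\SL_n(\Z))$ has the same Zariski closure, $\rho(\SL_n(\Z))$ is irreducible as well.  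This gives Zariski-irreducibility of the $\Gamma$-action on $\cL(1)$, as desired.

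The main obstacle is really just locating the correct structural statement about $(\IA_n)^{\ab}$ and verifying that the resulting $\SL_n(\Z)$-representation is algebraic in the strong sense of extending to a morphism of algebraic groups $\SL_n(\R) \to \GL(\cL(1))$; this is exactly what the Cohen--Pakianathan/Farb/Kawazumi description delivers.  Everything else is a straightforward adaptation of the Zariski-density argument already carried out in \S\ref{section:Kg}.
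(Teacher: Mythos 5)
Your proposal is correct and follows essentially the same route as the paper: reduce to $\cL(1)$ via Remark~\ref{remark:Lgen1}, identify $\IA_n^{\ab}$ with $V_{\Z}^*\otimes\bwedge^2 V_{\Z}$ (the paper's Theorem~\ref{theorem:IAn-ab}, there attributed to Bachmuth/Andreadakis/Formanek rather than Cohen--Pakianathan/Farb/Kawazumi, but the same statement) so that the action factors through an algebraic representation of $\SL_n$, and then run the Zariski-density argument from Claim~\ref{claim:secondKg}. The only cosmetic difference is your invocation of the Borel density theorem where the paper simply cites the classical density of $\SL_n(\Z)$ in $\SL_n(\R)$.
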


\noindent
For the proof of Lemma~\ref{lemma:sautfnzariski}, we need the following classical computation of $\IA_n^{\ab}$. Let $V_\Z=F_n^{\ab}\iso\Z^n$, and recall that the natural action of $\AutFn$ on $V_\Z$ factors through $\GL_n(\Z)$. The following description holds for all $n\geq 0$.

\begin{theorem}
\label{theorem:IAn-ab}
There is an $\AutFn$-equivariant isomorphism $\IA_n^{\ab} \iso \Hom(V_{\Z},\bwedge^2 V_{\Z})$.
\end{theorem}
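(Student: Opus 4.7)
The plan is to prove Theorem~\ref{theorem:IAn-ab} by constructing the isomorphism explicitly as the \emph{Johnson homomorphism} $\tau\colon \IA_n \to \Hom(V_\Z, \bwedge^2 V_\Z)$. Given $\phi \in \IA_n$, since $\phi$ acts trivially on $V_\Z = F_n^{\ab}$ we have $\phi(x_i)x_i^{-1} \in \gamma_2 F_n$ for each $i$. Using the canonical identification $\gamma_2 F_n/\gamma_3 F_n \iso \bwedge^2 V_\Z$ sending $[a,b] \mapsto \bar a \wedge \bar b$, I would define $\tau(\phi)(x_i) \in \bwedge^2 V_\Z$ to be the image of $\phi(x_i)x_i^{-1}$, then extend $\Z$-linearly to obtain a group homomorphism $\tau(\phi)\colon V_\Z \to \bwedge^2 V_\Z$.

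First I would verify that $\tau$ is itself a homomorphism: writing $\phi(x_i) = x_i u_i$ and $\psi(x_i) = x_i v_i$ with $u_i, v_i \in \gamma_2 F_n$, one computes $(\phi\psi)(x_i)x_i^{-1} = u_i\,\phi(v_i)$, and since the conjugation action of $\IA_n$ on $\gamma_2 F_n/\gamma_3 F_n \iso \bwedge^2 V_\Z$ factors through the (trivial) action of $\IA_n$ on $V_\Z$, we get $\phi(v_i) \equiv v_i \pmod{\gamma_3 F_n}$. Hence $\tau(\phi\psi) = \tau(\phi) + \tau(\psi)$. A parallel direct computation verifies that $\tau$ is $\AutFn$-equivariant with respect to the conjugation action on $\IA_n$ and the action on $\Hom(V_\Z, \bwedge^2 V_\Z)$ induced from the natural action on $V_\Z$.

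Next I would establish surjectivity by evaluating $\tau$ on the Magnus generators provided by Theorem~\ref{theorem:magnus}. Direct computation gives $C_{ij}(x_i)x_i^{-1} = [x_j, x_i^{-1}]$ and $M_{ijk}(x_i)x_i^{-1} = [x_j,x_k]$, so $\tau(C_{ij})$ is the homomorphism sending $x_i \mapsto x_i\wedge x_j$ and killing the other basis vectors, while $\tau(M_{ijk})$ sends $x_i \mapsto x_j\wedge x_k$ and kills the other basis vectors. As the indices vary, these images realize (up to sign) every standard basis vector of $\Hom(V_\Z, \bwedge^2 V_\Z)$, so $\tau$ is surjective.

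Finally, for injectivity on the abelianization, observe that by construction $\tau(\phi) = 0$ if and only if $\phi$ acts trivially on $F_n/\gamma_3 F_n$, i.e.\ $\ker \tau = \JIA_n(2)$. By the Andreadakis--Bachmuth theorem cited in the introduction, $\JIA_n(2) = \gamma_2 \IA_n = [\IA_n,\IA_n]$, so $\tau$ descends to an injective homomorphism $\bar\tau\colon \IA_n^{\ab} \to \Hom(V_\Z, \bwedge^2 V_\Z)$, which combined with the surjectivity above is the desired $\AutFn$-equivariant isomorphism. The main obstacle is precisely this reliance on the Andreadakis--Bachmuth identification $\JIA_n(2) = \gamma_2 \IA_n$; bypassing it would require an independent proof that the Magnus generators satisfy no nontrivial relations in $\IA_n^{\ab}$ (for instance, by exhibiting explicit quotients of $\IA_n$ on which these generators map to linearly independent elements of a free abelian group).
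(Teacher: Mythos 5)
Your argument is correct, but note that the paper does not actually prove Theorem~\ref{theorem:IAn-ab}: it is quoted as a known result, with the remark following it attributing the abelian-group statement to Bachmuth and Andreadakis and the $\AutFn$-module statement to Formanek. So there is no in-paper proof to compare against; what you have written is the standard Johnson-homomorphism argument. Your computations check out: $\tau$ is well defined and a homomorphism because conjugation and the $\IA_n$-action are both trivial on $\gamma_2 F_n/\gamma_3 F_n$; the images $\tau(C_{ij})=x_i^*\otimes(x_i\wedge x_j)$ and $\tau(M_{ijk})=x_i^*\otimes(x_j\wedge x_k)$ do span $\Hom(V_\Z,\bwedge^2 V_\Z)$, giving surjectivity; and $\ker\tau=\JIA_n(2)$ holds by the very definition of the Johnson filtration. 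You are right that the entire nontrivial content is then the identity $\JIA_n(2)=\gamma_2\IA_n$: given your (easy) surjectivity computation, the theorem and the Andreadakis--Bachmuth theorem are essentially equivalent, so your proof is best described as a reduction to a cited external result rather than a self-contained argument. That is perfectly consistent with how the paper itself treats both statements (the Andreadakis--Bachmuth identity is likewise quoted with citations in the introduction and never reproved), and your closing paragraph correctly identifies where the real work would lie if one wanted to avoid the citation.
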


As an abelian group, the description of $\IA_n^{\ab}$ in Theorem~\ref{theorem:IAn-ab} was  established by Bachmuth~\cite{Bachmuth} in 1966 and implicitly (and independently)
by Andreadakis~\cite{Andreadakis} in 1965. The description as an $\AutFn$-module must have been folklore for some time; the earliest proof in the literature that we are aware of is due to Formanek~\cite{Formanek}.

\begin{proof}[Proof of Lemma~\ref{lemma:sautfnzariski}]
Since $G(k)=\gamma_k G$ is the lower central series of $G$, by Remark~\ref{remark:Lgen1} it 
suffices to prove the lemma for $\cL(1)=\IA_n^{\ab} \otimes \R$.  By Theorem~\ref{theorem:IAn-ab}, the
action of $\SAutFn$ on $\IA_n^{\ab} \otimes \R$ factors through the surjection $\SAutFn\onto \SL_n(\Z)$. 
Since $\SL_n(\Z)$ is Zariski dense in $\SL_n(\R)$, arguing 
as in the proof of Lemma~\ref{lemma:torellizariski} in \S\ref{section:torellilcs}, we deduce that the image of 
$\SAutFn$ in $\GL(\IA_n^{\ab} \otimes \R)$ is irreducible.
\end{proof}

\para{Connectivity bounds}
The fourth condition in Corollary~\ref{corollary:heart} asserts that the graph $X_m(\Gamma)$ must be connected for some 
$m \geq \max\Set{$d_{\Gamma}(\cL(k))$}{$1 \leq k < N$}$.  This requires showing that $X_m(\Gamma)$ is
connected if $m$ is not too large (relative to $n$), and then estimating $d_{\Gamma}(\cL(k))$.  We
start with the first of these.

\begin{lemma}
\label{lemma:sautfnxm}
For all $m \geq 2$ such that $2m+1 \leq n$, the graph $X_m(\Gamma)$ is connected and nontrivial.
\end{lemma}

\noindent
For the proof of Lemma~\ref{lemma:sautfnxm}, we will need a generating set for $\SAutFn$.  
Let $\{x_1,\ldots,x_n\}$ be the standard basis for $F_n$.
For distinct $1 \leq i,j \leq n$, define $L_{ij} \in \SAutFn$ and $R_{ij} \in \SAutFn$ via the formulas
\[L_{ij}(x_{\ell}) = \begin{cases}
x_j x_{\ell} & \text{if $\ell = i$},\\
\phantom{x_j}x_{\ell} & \text{if $\ell \neq i$}.\end{cases}\qquad\qquad
R_{ij}(x_{\ell}) = \begin{cases}
x_{\ell}x_j  & \text{if $\ell = i$},\\
x_{\ell} & \text{if $\ell \neq i$}.\end{cases}
\]
Nielsen (\cite{NielsenGen}; see \cite{GerstenPres} for a modern account) proved the following.

\begin{theorem}
\label{theorem:nielsen}
For $n \geq 2$, the group $\SAutFn$ is generated by the set of all $L_{ij}$ and $R_{ij}$.
\end{theorem}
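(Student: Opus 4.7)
The plan is to bootstrap from Magnus's theorem (Theorem~\ref{theorem:magnus}) together with the classical fact that $\SL_n(\Z)$ is generated by elementary matrices. Let $N \leq \SAutFn$ denote the subgroup generated by the transvections $\{L_{ij}, R_{ij} : i \neq j\}$; the goal is to show $N = \SAutFn$.

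I would first reduce to showing $\IA_n \subseteq N$. Consider the surjection $\pi \colon \SAutFn \onto \SL_n(\Z)$ induced by the action on $F_n^{\ab} = \Z^n$. Both $L_{ij}$ and $R_{ij}$ map under $\pi$ to the same elementary matrix $I + E_{ji}$, and it is classical that $\SL_n(\Z)$ is generated by these elementary matrices for $n \geq 2$ (a consequence of the Euclidean algorithm applied row by row). Thus $\pi(N) = \SL_n(\Z) = \pi(\SAutFn)$, so since $\Ker \pi = \IA_n$ we have $\SAutFn = N \cdot \IA_n$, and the question reduces to showing $\IA_n \subseteq N$.

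To show $\IA_n \subseteq N$, I would invoke Magnus's theorem (Theorem~\ref{theorem:magnus}), which says $\IA_n$ is generated by the explicit elements $C_{ij}$ and $M_{ijk}$. It then suffices to realize each of these as a word in the $L_{\ast\ast}$ and $R_{\ast\ast}$. A direct computation gives $C_{ij} = R_{ij} \circ L_{ij}^{-1}$: indeed $L_{ij}^{-1}(x_i) = x_j^{-1} x_i$, and applying $R_{ij}$ to this word yields $x_j^{-1} x_i x_j$, exactly matching the definition of $C_{ij}$, while every other basis element is manifestly fixed. Similarly, tracking the image of $x_i$ through the four-fold composition $R_{ij}^{-1} R_{ik}^{-1} R_{ij} R_{ik}$ produces $x_i x_j^{-1} x_k^{-1} x_j x_k = x_i [x_j, x_k]$, so $M_{ijk} = [R_{ij}^{-1}, R_{ik}^{-1}]$, and again all generators other than $x_i$ are fixed at every stage.

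The only conceptual input beyond standard bookkeeping is the classical fact that $\SL_n(\Z)$ is generated by elementary matrices. The main obstacle is therefore not depth but correctness: one must pin down the precise sign and order conventions in the commutator identities above, which are easy to mis-state. An alternative, more self-contained route avoiding Theorem~\ref{theorem:magnus} would be the classical Nielsen reduction argument, which shows that any free basis $(\phi(x_1),\ldots,\phi(x_n))$ can be reduced via $L_{ij}^{\pm 1}$ and $R_{ij}^{\pm 1}$ moves to a signed permutation of $(x_1,\ldots,x_n)$ by iteratively shrinking the total word length; combined with the observation that even signed permutations already sit inside $N$ (by the $\SL_n(\Z)$ generation above), this would finish the proof, but the word-length analysis and the verification that the process terminates at a signed permutation of the basis is considerably more delicate than the bootstrap above.
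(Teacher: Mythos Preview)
The paper does not actually prove this theorem; it is stated as a classical result of Nielsen with a citation, so there is no ``paper's own proof'' to compare against. Your argument is nonetheless correct and self-contained modulo the two cited inputs (Magnus's generators for $\IA_n$ and elementary-matrix generation of $\SL_n(\Z)$): the reduction via the exact sequence $1\to\IA_n\to\SAutFn\to\SL_n(\Z)\to 1$ is valid, and your explicit identities $C_{ij}=R_{ij}L_{ij}^{-1}$ and $M_{ijk}$ as a commutator of $R_{ij}^{\pm 1}$ and $R_{ik}^{\pm 1}$ check out (your own caveat about sign and ordering conventions is well placed, but the substance is right).

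One point worth flagging: your route inverts the historical dependence, deducing Nielsen's 1924 theorem from Magnus's 1935 theorem. There is no logical circularity---Magnus's identification of generators for $\IA_n$ can be (and in the references the paper cites, is) established without invoking Nielsen's result---but you should be aware that the ``alternative, more self-contained route'' you sketch at the end (Nielsen reduction on word length) is in fact Nielsen's original argument and is what the paper is citing. Your bootstrap is shorter given the black boxes available in this paper, while Nielsen reduction is more elementary in the sense of not requiring Magnus's theorem as input.
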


\begin{proof}[Proof of Lemma~\ref{lemma:sautfnxm}]
We will use Proposition~\ref{prop:connectivity}.  Let $S \subset \SAutFn$ be the set of all $L_{ij}$ and $R_{ij}$.  Setting
$c=\max\Set{$\comp(s)$}{$s \in S$}$, the fact that $\comp(L_{ij}) = \comp(R_{ij}) = 2$ implies that $c=2$.  
Lemma~\ref{lemma:sautfnbasic} says that $\SAutFn$ is transitive and commuting.  In particular, every element
of $S$ is good.  We can thus apply Proposition~\ref{prop:connectivity} and deduce that
$X_m(\Gamma)$ is connected whenever $2m+c-1 = 2m+1 \leq n$. 
As for the nontriviality of $X_m(\Gamma)$, it follows immediately from the fact
that $\SAut(F_m)$ is never normal in $\SAut(F_n)$ except with $m=0,1$ (when $\SAut(F_m) = 1$) and
when $m=n$ (when $\SAut(F_m) = \SAut(F_n)$); see Remark \ref{remark:nontrivial}.
\end{proof}

We now estimate $d_\Gamma(\cL(k))$.

\begin{lemma}
\label{lemma:sautfndgamma}
For all $k\geq 1$ we have $d_\Gamma(\cL(k))\leq 2k$.
\end{lemma}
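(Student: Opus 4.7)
The plan is to deduce Lemma~\ref{lemma:sautfndgamma} as an immediate consequence of Proposition~\ref{prop:generalbound}, once we combine it with two specific inputs for $\IA_n$: an improved base case $d_\Gamma(\cL(1)) \leq 2$ coming from Magnus's theorem, and the weakly commuting property of $\IA_n$. The key observation is that Proposition~\ref{prop:generalbound} actually establishes the step-by-step recursion $d_\Gamma(\cL(k)) \leq d_\Gamma(\cL(k-1)) + e$ for $k \geq 2$, so iterating from a sharper base will be enough.

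First I would verify the base case $d_\Gamma(\cL(1)) \leq 2$. The key ingredient is the second half of Theorem~\ref{theorem:magnus}, which states that $\IA_n$ is $\SAutFn$-normally generated by the single element $C_{12}$. Since $C_{12}$ lies in $G_{\{1,2\}}$, its image in the abelianization lies in $\cL(1)_{\{1,2\}}$, a subspace of complexity~$2$. Normal generation of $\IA_n$ by $C_{12}$ means the $\Gamma$-orbit of this image spans $\cL(1) = \IA_n^{\ab} \otimes \R$, so $d_\Gamma(\cL(1)) \leq 2$.

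Next I would bound $d(\cL(1)) \leq 3$ using the first half of Theorem~\ref{theorem:magnus}: the generators $C_{ij} \in G_{\{i,j\}}$ and $M_{ijk} \in G_{\{i,j,k\}}$ of $\IA_n$ have complexities $2$ and $3$ respectively, so their images span $\cL(1)$ using subspaces of complexity at most~$3$. By Lemma~\ref{lemma:sautfnbasic}, $\IA_n$ is commuting and hence a fortiori weakly commuting. Since the lower central series of any group has associated Lie algebra generated in degree~$1$, Proposition~\ref{prop:generalbound} applies with $d = 3$ and $e = d-1 = 2$. Iterating the resulting recursion $d_\Gamma(\cL(k)) \leq d_\Gamma(\cL(k-1)) + 2$ from $d_\Gamma(\cL(1)) \leq 2$ yields $d_\Gamma(\cL(k)) \leq 2 + 2(k-1) = 2k$, as claimed. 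The only non-routine step is recognizing the need for the improved base case: applying the final inequality of Proposition~\ref{prop:generalbound} directly with $d = 3$ would give only $d_\Gamma(\cL(k)) \leq 2k+1$, so one must use the stronger bound $d_\Gamma(\cL(1)) \leq 2$ coming from normal generation of $\IA_n$ by a single complexity-$2$ element to absorb the required saving.
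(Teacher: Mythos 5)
Your proposal is correct and matches the paper's proof essentially verbatim: both invoke Proposition~\ref{prop:generalbound} with $d=d(\cL(1))\leq 3$ and $e=d-1=2$ (using that $\IA_n$ is commuting, hence weakly commuting), and both use the sharper base case $d_\Gamma(\cL(1))\leq 2$ from $\SAutFn$-normal generation by $C_{12}$ rather than the crude bound $d+(k-1)e$. Your closing observation about why the improved base case is needed is exactly the point the paper's computation exploits.
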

\begin{proof}
We will use Proposition~\ref{prop:generalbound}.  This proposition requires that $\cL$ is generated
by $\cL(1)$, which holds since the filtration $G(k)$ is the lower central series.  We now calculate
the quantities that go into its bound:
\begin{compactitem}
\item Theorem~\ref{theorem:magnus} says that $\IA_n$ is $\SAutFn$-normally
generated by the element $C_{12}$.  Since $\comp(C_{12}) = 2$, we have $d_{\Gamma}(\cL(1)) \leq 2$. We cannot have $d_{\Gamma}(\cL(1))\leq 1$ (since $G_{\{i\}}=1$ and thus $\cL(1)_{\{i\}}=0$) so in fact $d_{\Gamma}(\cL(1)) = 2$.
\item Set $d = d(\cL(1))$.  Theorem~\ref{theorem:magnus} says that $\IA_n$ is generated
by the set of all $C_{ij}$ and $M_{ijk}$.  Since $\comp(C_{ij}) = 2$ and $\comp(M_{ijk}) = 3$, we deduce
that $d \leq 3$ (and one can check that in fact $d=3$).
\item Lemma~\ref{lemma:sautfnbasic} says that $\SAutFn$ is commuting, so as in Proposition~\ref{prop:generalbound}
we set $e = d-1 = 2$.
\end{compactitem}
Proposition~\ref{prop:generalbound} now says that for $k \geq 1$ we have 
\[d_\Gamma(\cL(k)) \leq d_\Gamma(\cL(1)) +(k-1)e = 2 + (k-1)2 = 2k.\qedhere\]
\end{proof}

\para{Putting it all together}
All the pieces are now in place to prove Theorem~\ref{maintheorem:ialcs}.  This theorem
has two parts that we prove separately.

\begin{proof}[Proof of Theorem~\ref{maintheorem:ialcs} for $k \geq 3$]
The notation is as above.  As was established in \S \ref{section:outlineproof}, we must
prove that $G(k)$ is finitely generated for $k \geq 3$ and $n \geq 4k-3$,
or equivalently when $k \leq \frac{n+3}{4}$.
We will apply Corollary~\ref{corollary:heart} with $N = \lfloor\frac{n+3}{4}\rfloor$ and $m=2(N-1)=2\cdot\lfloor\frac{n-1}{4}\rfloor$.  This theorem has four hypotheses: 
\begin{compactitem}
\item The $[n]$-group $\Gamma$ must be transitive, which is one of the conclusions of Lemma~\ref{lemma:sautfnbasic}.
\item The group $G$ must be finitely generated, which is Theorem~\ref{theorem:magnus}.
\item The action of $\Gamma$ on each $\cL(k)$ must be Zariski-irreducible, which is
Lemma~\ref{lemma:sautfnzariski}.
\item The graph $X_m(\Gamma)$ must be connected and nontrivial, and we must have
\begin{equation}
\label{eqn:autmbound}
m \geq \max\Set{$d_{\Gamma}(\cL(k))$}{$1 \leq k < N$}.
\end{equation}
To see that $X_m(\Gamma)$ is connected and nontrivial, it is enough to verify the two hypotheses
of Lemma~\ref{lemma:sautfnxm}.  The first is that $m \geq 2$; indeed, since $n\geq 4k-3\geq 9$, we have
\[m=2\cdot\lfloor\frac{n-1}{4}\rfloor\geq 4 \geq 2.\]
The second is that $2m+1 \leq n$; indeed,
\[2m+1=4\cdot \lfloor\frac{n-1}{4}\rfloor +1\leq 4\cdot \frac{n-1}{4} +1 = n.\]
As for \eqref{eqn:autmbound}, Lemma~\ref{lemma:sautfndgamma} says that
$d_{\Gamma}(\cL(k)) \leq 2k$, so
\[\max\Set{$d_{\Gamma}(\cL(k))$}{$1 \leq k < N$}\leq 2(N-1)=m,\] 
as desired.
\end{compactitem}
Applying Corollary~\ref{corollary:heart}, we conclude that $G(k)$ is finitely generated for $1 \leq k \leq N$.
\end{proof}

\begin{proof}[Proof of Theorem~\ref{maintheorem:ialcs} for $k = 2$]
We must prove that $[\IA_n,\IA_n]$ is finitely generated for $n \geq 4$.
To do this, we will apply  Theorem~\ref{thm:Kg-argument-general} to $G=\IA_n$ and $\Gamma=\SAutFn$ acting by conjugation. Our $\Gamma$-orbit $C$ will be the $\Gamma$-conjugates of the Magnus generator $C_{12}$; Theorem~\ref{theorem:magnus} tells us that $G$ is generated by $C$ and finitely generated. We have already checked in Lemma~\ref{lemma:sautfnzariski} that the action of $\Gamma$ on $\cL(1)\iso \Hom(G;\R)$ is Zariski-irreducible. Therefore we need only verify the remaining hypothesis of Theorem~\ref{thm:Kg-argument-general}: denoting by $\Conj(\IA_n)$ the graph whose vertices are $\SAutFn$-conjugates of $C_{12}$ with edges connecting commuting elements, we must show that $\Conj(\IA_n)$ is connected when $n\geq 4$.

Let $S=\{L_{ij}^\pm,R_{ij}^\pm\}$ be the generating set for $\SAutFn$ from Theorem~\ref{theorem:nielsen}. Just as in the proofs of Proposition~\ref{prop:BPgraph} or Proposition~\ref{prop:connectivity}, to prove that $\Conj(\IA_n)$ is connected, it suffices to prove the following: for all $s\in S$, there exists a path $\eta_s$ in $\Conj(\IA_n)$ from $C_{12}$ to ${C_{12}}^s$. We will repeatedly rely on the observation that for fixed $b\in [n]$, the $2n-2$ elements $\{L_{ib},R_{ib}\,|\,i\neq b\}$ commute; indeed, they generate a subgroup of $\SAutFn$ isomorphic to $\Z^{2n-2}$, which contains $C_{ib}=R_{ib}L_{ib}^{-1}$ for all $i\neq b$.%\pagebreak

Fix distinct $a$ and $b$ in $[n]$ and consider $s\in \{L_{ab},L_{ab}^{-1},R_{ab},R_{ab}^{-1}\}$.
\begin{compactitem}
\item If $\{a,b\}\cap\{1,2\}=\emptyset$, then $s$ commutes with $C_{12}$; thus $(C_{12})^s=C_{12}$ and there is nothing to prove.
\item If $\{a,b\}=\{1,2\}$, then $C_{34}$ commutes with both $C_{12}$ and $s$, and thus with  $(C_{12})^s$. Therefore for $\eta_s$ we may take the length 2 path from $C_{12}$ to $C_{34}$ to $(C_{12})^s$. 
\item If $a\in \{1,2\}$ and $b\notin \{1,2\}$, we can choose some $c\in [n]\setminus\{1,2,b\}$ since $n\geq 4$. Then $C_{cb}$ commutes with  $C_{12}$ (since $\{1,2\}\cap \{c,b\}=\emptyset$). At the same time, $C_{cb}$ commutes with $s$ (since both lie in the abelian subgroup generated by $L_{ib}$ and $R_{ib}$), and thus with  $(C_{12})^s$. Therefore for $\eta_s$ we may take the length 2 path from $C_{12}$ to $C_{cb}$ to $(C_{12})^s$.
\item It remains to handle the case $b\in \{1,2\}$ and $a\notin \{1,2\}$.
\begin{compactitem}
\item If $b=2$, then $s$ commutes with $C_{12}$, so $(C_{12})^s=C_{12}$ and there is nothing to prove.
\item Finally, if $b=1$ and $a\notin \{1,2\}$, we can choose some $d\in [n]\setminus\{1,2,a\}$ since $n\geq 4$. Then $C_{d2}$ commutes with  $C_{12}$ (since both lie in the abelian subgroup generated by $L_{i2}$ and $R_{i2}$). At the same time, $C_{d2}$ commutes with $s$ (since $\{a,b\}\cap \{d,2\}=\emptyset$), and thus with $(C_{12})^s$. Therefore for $\eta_s$ we may take the length 2 path from $C_{12}$ to $C_{d2}$ to $(C_{12})^s$.
\end{compactitem}
\end{compactitem}
This concludes the proof that $\Conj(\IA_n)$ is connected for $n\geq 4$.  Theorem~\ref{thm:Kg-argument-general} now shows that $[\IA_n,\IA_n]=\gamma_2(\IA_n)$ is finitely generated for $n\geq 4$.
\end{proof}

\subsection{The Johnson filtration of \texorpdfstring{$\AutFn$}{AutFn}}
\label{section:iajohnson}

We close by describing how to modify the proof of Theorem~\ref{maintheorem:ialcs} from \S\ref{section:ialcs} to
prove Theorem~\ref{maintheorem:iajohnson}, which as we discussed in \S \ref{section:outlineproof} is equivalent
to the assertion that for $k \geq 2$ the term $\JIA_n(k)$ of the Johnson filtration of $\IA_n$ is finitely generated for $n \geq 2k+3$.  This is stronger than the bound from Theorem~\ref{maintheorem:ialcs}, which
only gives this for $n \geq 4k-3$.  

\para{Notation}
The following notation will be in place for the remainder of this subsection.  Fix some $n \geq 2$.
Let $\Gamma = \SAutFn$, let $G = \IA_n$, and let $G(k) = \JIA_n(k)$.  Let
$\cLJ = \bigoplus \cLJ(k)$ be the graded real Lie algebra associated to $G(k)$.  Maintaining the same notation from the previous subsection, let
$\cL = \bigoplus \cL(k)$ be the graded real Lie algebra associated to the lower central series of $\IA_n$.  
The groups $\Gamma$ and
$G$ and $G(k)$ are endowed with the $[n]$-group structure coming from the $[n]$-group structure on $\AutFn$, and
the vector spaces $\cLJ(k)$ and $\cL(k)$ are endowed with the induced $[n]$-vector space structures.

\para{What must be done}
The structure of the proof of Theorem~\ref{maintheorem:iajohnson} is exactly the same as that of Theorem~\ref{maintheorem:ialcs};
the only change needed is to use the following two results in place of Lemmas~\ref{lemma:sautfnzariski} and~\ref{lemma:sautfndgamma}, respectively.
\begin{lemma}
\label{lemma:sautfnjohnsonzariski}
For all $k\geq 1$ the action of $\Gamma$ on $\cLJ(k)$ is Zariski-irreducible.
\end{lemma}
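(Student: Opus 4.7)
The plan is to embed $\cLJ(k)$ equivariantly into a rational $\SL_n$-representation and then invoke the Zariski density of $\SL_n(\Z)$ in $\SL_n(\R)$, analogously to the proof of Lemma~\ref{lemma:sautfnzariski}. The wrinkle is that Lemma~\ref{lemma:zariskilie} cannot be applied here, since the Johnson filtration Lie algebra $\cLJ$ is not known to be generated by $\cLJ(1)$; an extra intermediate step will be needed to bypass this.

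First I would set $V_{\Z}=F_n^{\ab}$ and $V=V_{\Z}\otimes\R$, and let $\Lie_{k+1}(V_{\Z})$ denote the degree $k+1$ component of the free Lie algebra on $V_{\Z}$. The classical Andreadakis--Johnson homomorphism furnishes an $\AutFn$-equivariant injection
\[\tau_k\colon \JIA_n(k)/\JIA_n(k+1)\hookrightarrow \Hom(V_{\Z},\Lie_{k+1}(V_{\Z})),\]
where $\AutFn$ acts on the target through the natural surjection $\AutFn\onto \GL_n(\Z)=\GL(V_{\Z})$. Tensoring with $\R$ yields a $\Gamma$-equivariant linear embedding $\cLJ(k)\hookrightarrow \Hom(V,\Lie_{k+1}(V))$, and I would identify $\cLJ(k)$ with its image. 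The action of $\Gamma=\SAutFn$ on the target factors as $\Gamma\onto \SL_n(\Z)\hookrightarrow \SL_n(\R)\to \GL(\Hom(V,\Lie_{k+1}(V)))$, where the last map is a morphism of algebraic groups.

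The crucial observation is that although a priori $\cLJ(k)$ is only an $\SL_n(\Z)$-invariant subspace, it is automatically $\SL_n(\R)$-invariant: the condition of stabilizing a given linear subspace is Zariski-closed in $\GL(\Hom(V,\Lie_{k+1}(V)))$, so its preimage in $\SL_n(\R)$ is a Zariski-closed subset containing the Zariski-dense subgroup $\SL_n(\Z)$, and hence equals $\SL_n(\R)$. Restricting the action then yields a rational representation $\SL_n(\R)\to \GL(\cLJ(k))$ whose image is a connected Zariski-closed subgroup of $\GL(\cLJ(k))$, and in particular is irreducible in the Zariski topology.

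To conclude, the map $\Gamma\to \GL(\cLJ(k))$ factors through the Zariski-dense inclusion $\SL_n(\Z)\hookrightarrow \SL_n(\R)$, so the image of $\Gamma$ in $\GL(\cLJ(k))$ is Zariski-dense in the image of $\SL_n(\R)$. By property~(iii) from the proof of Claim~\ref{claim:secondKg}, a subspace of a topological space is irreducible if and only if its closure is, so the image of $\Gamma$ is Zariski-irreducible. The only real hurdle here is invoking the Andreadakis--Johnson embedding; once that is in hand, the remainder is a direct adaptation of the argument from Lemma~\ref{lemma:sautfnzariski}, with the extra step needed to upgrade $\SL_n(\Z)$-invariance of $\cLJ(k)$ to $\SL_n(\R)$-invariance.
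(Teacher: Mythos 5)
Your proposal is correct and follows essentially the same route as the paper: both use the Johnson homomorphism to embed $\cLJ(k)$ equivariantly into $V^*\otimes\Lie_{k+1}(V)$, upgrade the $\SL_n(\Z)$-invariance of this subspace to $\SL_n(\R)$-invariance via Zariski density, and conclude that the image of $\Gamma$ in $\GL(\cLJ(k))$ has irreducible Zariski closure. Your extra explanation of why stabilizing a subspace is a Zariski-closed condition just makes explicit a step the paper states more briefly.
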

\begin{proposition}
\label{prop:Bartholdibound} 
For all $k\geq 1$ we have $d_\Gamma(\cLJ(k))\leq k+2$.
\end{proposition}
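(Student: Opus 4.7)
The plan is to produce explicit Magnus-type elements of $\JIA_n(k)$ of complexity at most $k+2$, using the Andreadakis--Johnson embedding, and to show these elements $\Gamma$-generate $\cLJ(k)$. Let $V = F_n^{\ab}\otimes\R$, and recall the Andreadakis--Johnson map $\tau_k\colon \cLJ(k)\hookrightarrow \Hom(V,\Lie_{k+1}(V))$, which sends the class of $\phi\in \JIA_n(k)$ to the homomorphism $\bar x_\ell\mapsto \phi(x_\ell)x_\ell^{-1}\bmod\gamma_{k+2}F_n$, where $\Lie_{k+1}(V)$ is the degree-$(k+1)$ component of the free Lie algebra on $V$. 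This map is injective, $\Gamma$-equivariant, and a morphism of $[n]$-vector spaces when $\Hom(V,\Lie_{k+1}(V))$ is given the structure $\Hom(V,\Lie_{k+1}(V))_I = \Hom(V_I,\Lie_{k+1}(V_I))$.

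For each $j\in[n]$ and each basis element $w$ of $\Lie_{k+1}(V)$ in the generators $\{x_i : i\neq j\}$, let $\tilde w\in \gamma_{k+1}F_n\cap F_{S(w)}$ be the associated iterated group commutator, and define $M_{j,w}\in \AutFn$ by $M_{j,w}(x_j) = x_j\tilde w$ and $M_{j,w}(x_\ell)=x_\ell$ for $\ell\neq j$. Since $\tilde w$ does not involve $x_j$, this is a well-defined automorphism (its inverse is $M_{j,w^{-1}}$), and $M_{j,w}\in \JIA_n(k)$ because $\tilde w\in \gamma_{k+1}F_n$. The automorphism $M_{j,w}$ is supported on $\{j\}\cup S(w)$, which has at most $k+2$ elements, so $\bar M_{j,w}\in \cLJ(k)_{\{j\}\cup S(w)}$ and $\tau_k(\bar M_{j,w}) = x_j^*\otimes w$.

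The heart of the argument is to show that the $\Gamma$-submodule of $\cLJ(k)$ generated by the $\bar M_{j,w}$ is all of $\cLJ(k)$. By injectivity of $\tau_k$, this reduces to showing that the $\Gamma$-invariant subspace $U\subseteq \Hom(V,\Lie_{k+1}(V))$ generated by $\{x_j^*\otimes w : j\notin S(w)\}$ contains $\tau_k(\cLJ(k))$. The key tool is the transvection $T_{b,\ell}\in \SL_n(\Z)\subseteq \Gamma$ sending $x_b\mapsto x_b+x_\ell$, which acts on $\Hom(V,\Lie_{k+1}(V))$ via $T_{b,\ell}\cdot x_\ell^* = x_\ell^*-x_b^*$ and by substituting $x_b\mapsto x_b+x_\ell$ inside Lie monomials. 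Given an arbitrary basis element $x_\ell^*\otimes w'$ (possibly with $\ell\in S(w')$), pick a free index $b\notin S(w')\cup\{\ell\}$ (available because $|S(w')|\leq k+1$ and $n\geq 2k+3$), let $w''$ be $w'$ with one occurrence of $x_\ell$ replaced by $x_b$, and expand $T_{b,\ell}\cdot(x_\ell^*\otimes w'')$. Solving for $x_\ell^*\otimes w'$ expresses it in terms of the generator $x_b^*\otimes w'$ together with $\Gamma$-translates of elements $x_j^*\otimes w''$ carrying strictly fewer occurrences of the troublesome variable; handled inductively, this shows $U$ contains every basis element and hence all of $\Hom(V,\Lie_{k+1}(V))$.

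The main obstacle is making this transvection procedure into a terminating induction. Each single transvection produces several cross-terms, some of which trade one "bad" occurrence for another in a different variable, so the induction must be set up on a well-founded measure such as the reverse lexicographic order on the multi-index $(r_1(w),\dots,r_n(w))$ of occurrences. Verifying that the auxiliary cross-terms can always be reduced to generators, using fresh coordinates supplied by $n\geq 2k+3$, is the delicate technical step of the proof.
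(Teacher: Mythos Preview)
Your approach has a genuine gap: the Magnus-type elements $M_{j,w}$ with $j\notin S(w)$ do \emph{not} $\Gamma$-generate $\cLJ(k)$, so no induction of the kind you describe can terminate.

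The obstruction is the Satoh/Morita trace. Consider the $\Gamma$-equivariant contraction
\[
\Phi\colon V^*\otimes \Lie_{k+1}(V)\hookrightarrow V^*\otimes V^{\otimes(k+1)}\longrightarrow V^{\otimes k},
\qquad v^*\otimes v_0\otimes\cdots\otimes v_k\mapsto v^*(v_0)\,v_1\otimes\cdots\otimes v_k.
\]
If $j\notin S(w)$ then $w\in\Lie_{k+1}(\ker x_j^*)$, so every tensor term of $w$ begins with a letter other than $x_j$, and hence $\Phi(x_j^*\otimes w)=0$. Thus all of your generators lie in $\Ker\Phi$, and since $\Ker\Phi$ is $\Gamma$-invariant, the $\Gamma$-span $U$ you define satisfies $U\subseteq\Ker\Phi$. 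But $\Phi$ restricted to $\cLJ(k)$ is nonzero (its image is exactly the cyclically invariant subspace $W\subseteq V^{\otimes k}$), so $U\cap\cLJ(k)$ is a proper subspace of $\cLJ(k)$, and in particular $U$ cannot contain all basis elements of $\Hom(V,\Lie_{k+1}(V))$ as you claim.

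Concretely, your transvection step produces the cross-term $x_b^*\otimes w''$, where $b\in S(w'')$; this is again a ``bad'' element with exactly one occurrence of the dual variable in the Lie word, and these are precisely the elements that detect the trace. No well-founded measure will eliminate them, because modulo $\Ker\Phi$ you are trying to show $0$ generates something nonzero.

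The paper's proof uses your elements $M_{j,w}$ (called $T_{i,\omega}$ there) to generate exactly $\Ker\Phi\cap\cLJ(k)$, and then introduces a \emph{second} family of complexity-$(k+2)$ elements
\[
S_\mu=[M_{ij\mu_1},C_{i\mu_2},\ldots,C_{i\mu_{k-1}},M_{ji\mu_k}]\in\JIA_n(k),
\]
built as iterated commutators of Magnus generators, whose images under $\Phi$ span $W$. The two families together $\Gamma$-generate $\cLJ(k)$. Your proposal is missing this second family (or any substitute that surjects onto the trace).
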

The improved bound in Proposition~\ref{prop:Bartholdibound} (compared with the bound $d_\Gamma(\cL(k))\leq 2k$ in Lemma~\ref{lemma:sautfndgamma})  is the source of our improved range of finite generation for $\JIA_n(k)$. We will prove these two results below, but first we illustrate how they imply Theorem~\ref{maintheorem:iajohnson}.%\pagebreak

\begin{proof}[Proof of Theorem~\ref{maintheorem:iajohnson}]
We must
prove that $G(k)=\JIA_n(k)$ is finitely generated for $n\geq 2k+3$, or equivalently when $k\leq \frac{n-3}{2}$. 
With notation as above, we will apply Corollary~\ref{corollary:heart} with $N = \lfloor\frac{n-3}{2}\rfloor$ and $m=N+1=\lfloor\frac{n-1}{2}\rfloor$.
The first two hypotheses, dealing with $\Gamma$ and $G(1) = \IA_n$, are unchanged from before, so we must verify the remaining two hypotheses.
\begin{compactitem}
\item The action of $\Gamma$ on each $\cLJ(k)$ must be Zariski-irreducible, which is
Lemma~\ref{lemma:sautfnjohnsonzariski}.
\item The graph $X_m(\Gamma)$ must be connected and nontrivial, and we must have
\begin{equation}
\label{eqn:autmbound2}
m\geq \max\Set{$d_{\Gamma}(\cLJ(k))$}{$1 \leq k < N$}.
\end{equation}
To see that $X_m(\Gamma)$ is connected and nontrivial, it is enough to verify the two hypotheses
of Lemma~\ref{lemma:sautfnxm}.  The first is that $m \geq 2$; indeed, since $n\geq 2k+3\geq 5$, we have
\[m=\lfloor\frac{n-1}{2}\rfloor\geq 2.\]
The second is that $2m+1 \leq n$; indeed,
\[2m+1=2\cdot \lfloor\frac{n-1}{2}\rfloor +1\leq 2\cdot \frac{n-1}{2} +1 = n.\]
As for \eqref{eqn:autmbound2}, Proposition~\ref{prop:Bartholdibound} says that
$d_{\Gamma}(\cLJ(k)) \leq k+2$, so
\[\max\Set{$d_{\Gamma}(\cL(k))$}{$1 \leq k < N$}\leq (N-1)+2=m,\] 
as desired.
\end{compactitem}
Applying Corollary~\ref{corollary:heart}, we conclude that $G(k)$ is finitely generated for $1 \leq k \leq N$.
\end{proof}

We now proceed to prove Lemma~\ref{lemma:sautfnjohnsonzariski} and Proposition~\ref{prop:Bartholdibound}.
The following key fact will be used in the proofs of both. 
Let $V_\Z=F_n^{\ab}\iso \Z^n$ and $V=V_\Z\otimes \R \iso \R^n$. Let $\Lie(V_\Z)=\bigoplus \Lie_m(V_\Z)$ be the free Lie algebra on $V_\Z$, and let $\Lie(V)=\bigoplus \Lie_m(V)$ be the free $\R$-Lie algebra on $V$, so that $\Lie_m(V)\iso \Lie_m(V_\Z)\otimes \R$. The action of $\Gamma$ on $V_\Z$ and $V$, which factors through $\SL(V_\Z)\iso \SL_{n}(\Z)$, extends to an action on $\Lie(V_\Z)$ and $\Lie(V)$ by Lie algebra automorphisms.  There is a canonical $\Gamma$-equivariant embedding \[\iota\colon \JIA_n[k]/\JIA_n[k+1]\into V_\Z^*\otimes \Lie_{k+1}(V_\Z),\] described concretely as follows. Given $\varphi\in \Aut(F_n)$, to say that $\varphi\in \cLJ(k)$ means by definition that the map $x\mapsto x^{-1}\varphi(x)$ defines a function $F_n\to \gamma_{k+1} F_n$. This descends to a homomorphism from $F_n^{\ab}=V_\Z$ to $\gamma_{k+1} F_n/\gamma_{k+2}F_n\iso \Lie_{k+1}(V_\Z)$. The resulting assignment $\JIA_n[k]\to V_\Z^*\otimes \Lie_{k+1}(V_\Z)$ is a homomorphism, and by definition its kernel is $\JIA_n[k+1]$.
Tensoring with $\R$, we obtain a $\Gamma$-equivariant embedding of $\cLJ(k)$ into $M(k)\coloneq V^*\otimes \Lie_{k+1}(V)$.
In particular, this equivariance implies that the action of $\Gamma$ on $\cLJ(k)$ factors through $\SL(V_\Z)$.

\begin{proof}[Proof of Lemma~\ref{lemma:sautfnjohnsonzariski}]
The action of $\SL(V_\Z)$ on $M(k)$ extends to a polynomial representation of $\SL(V)\iso \SL_n(\R)$. The $\Gamma$-equivariance of the embedding $\cLJ(k)\to M(k)$ implies that the subspace $\cLJ(k)$ is $\SL(V_\Z)$-invariant. Since $\SL(V_\Z)$ is Zariski-dense in $\SL(V)$ and the map $\GL(V)\to \GL(M(k))$ is Zariski-continuous, the subspace $\cLJ(k)$ must also be $\SL(V)$-invariant. Moreover, this implies that the Zariski closure of the image of $\Gamma$  in $\GL(\cLJ(k))$ coincides with the image of $\SL(V)$; it is therefore a quotient of $\SL(V)\iso \SL_n(\R)$, and thus is irreducible.
\end{proof}

\begin{proof}[Proof of Proposition~\ref{prop:Bartholdibound}]This proposition is a fairly easy consequence of Bartholdi's work \cite{Bar}, but in order to make our argument precise it will be more convenient to refer to other sources.

Note that when $n\leq k+2$, the proposition is vacuous; we may therefore assume that $k<n-2$. We will actually only assume $k\leq n-2$ since this suffices for the argument below. Throughout this section, we will write $[v_1,v_2,\ldots,v_k]$ for the \emph{left-normed} commutator:
\[[v_1,v_2,\ldots,v_k]\coloneq [[[v_1,v_2],\cdots],v_k].\]
For $k=1$, since $\JIA_n(2)=\gamma_2(\IA_n)$ we have already obtained the stronger bound 
\[d_\Gamma(\cLJ(1))=d_\Gamma(\cL(1))\leq 2\] 
in Lemma~\ref{lemma:sautfndgamma}. So fix $2\leq k\leq n-2$, and consider the following finite families of automorphisms:
\begin{compactitem}
\item[(1)] Let $i\in [n]$ and $\omega=\omega_1\omega_2\ldots\omega_{k+1}$ be a sequence of length $k+1$ with $i\notin \omega$ (that is,
$\omega_j\in [n]\setminus\{i\}$ for each $j$). Let $T_{i,\omega}$ be the element of $\AutFn$ which sends
$x_i$ to $x_i[x_{\omega_1},x_{\omega_2},\ldots, x_{\omega_{k+1}}]$ 
and fixes $x_j$ for all $j\neq i$.
\item[(2)] Let $\mu=\mu_1\mu_2\ldots\mu_{k}$ be a sequence of length $k$ with $\mu_j\in [n]$ for each $j$.
For each $\mu$, choose once and for all two distinct elements $i, j\in [n]$ with $i\notin \mu$ and $j\notin \mu$ (this is possible precisely because $k\leq n-2$).
Define $S_{\mu}$ to be the left-normed commutator 
\[S_{\mu}=[M_{ij\mu_1}, C_{i\mu_2},C_{i\mu_3},\ldots,C_{i\mu_{k-1}},M_{ji\mu_k}],\]
where  $C_{ij}$ and $M_{ijl}$ are the Magnus generators defined before Theorem~\ref{theorem:magnus}. Note that the definition of $S_{\mu}$ depends on the choice of $i$ and $j$, but this dependence will not be important for our purposes.
\end{compactitem} 

By construction, the elements $T_{i,\omega}$ and $S_{\mu}$ have complexity at most $k+2$ and lie in $G(k)$. Let $t_{i,\omega}$ and $s_\mu$ be their images in $\cLJ(k)$.
Denote by $A$ (resp.\ $B$) the subspace of $\cLJ(k)$ generated by the $\Gamma$-orbits of the elements $t_{i,\omega}$ (resp.\ by the $\Gamma$-orbits of the elements $s_{\mu}$). Since $t_{i,\omega}$ and $s_{\mu}$ have complexity at most $k+2$, to prove that $d_\Gamma(\cLJ(k))\leq k+2$ it suffices to show that $\cLJ(k)=A+B$. 
%We now explain how the latter follows from \cite{Bar}. 

Define the map $\Phi\colon V^*\otimes V^{\otimes k+1}\to V^{\otimes k}$ by 
\[\Phi(v^*\otimes v_0\otimes\cdots \otimes v_{k})=v^*(v_0)v_1\otimes\cdots \otimes v_{k}.\]
Realizing $\Lie_{k+1}(V)$ as a subspace of $V^{\otimes k+1}$ in the standard way, we obtain a composite $\Gamma$-equivariant map
\[\tau\colon \cLJ(k)\into M(k)=V^*\otimes \Lie_{k+1}(V)\into V^*\otimes V^{\otimes k+1}\stackrel{\Phi}{\longrightarrow} V^{\otimes k}.\]
Let $\langle \gamma \rangle$ be a cyclic group of order $k$ acting on $V^{\otimes k}$ by cyclically permuting the factors, that is,
$\gamma(v_1\otimes\cdots \otimes v_{k-1}\otimes v_{k})=v_2\otimes\cdots \otimes v_{k}\otimes v_1$. Let $W$ be the subspace of
$\gamma$-invariant elements in $V^{\otimes k}$.
We claim that
\begin{compactenum}[label=(\roman*)]
\item\label{part:Aker} $A=\Ker(\tau)$
\item\label{part:tauL} $\tau(\cLJ(k))\subseteq W$
\item\label{part:tauB} $\tau(B)\supseteq W$
\end{compactenum}
Claims \ref{part:tauL} and \ref{part:tauB} together imply that  $\tau(B)=W=\tau(\cLJ(k))$, so with \ref{part:Aker} this implies that $\cLJ(k)=A+B$ as desired.

For $i\in [n]$ let $e_i\in V=F_n^{\ab}\otimes \R$ be the image of $x_i$, and let ${e_1^*,\ldots,e_n^*}$ be the dual basis of $V^*$. For any sequence $\delta=i\omega=i\omega_1\omega_2\ldots\omega_{k+1}$ of length $k+2$, define 
\[e_{\delta}=e_i^*\otimes e_{\omega}=e_i^*\otimes [e_{\omega_1},e_{\omega_2},\ldots,e_{\omega_{k+1}}].\] 
Note that these elements span $V^*\otimes \Lie_{k+1}(V)$.

The assertion~\ref{part:tauL} is merely a restatement of  \cite[Prop~5.3]{MassuyeauSakasai}. For~\ref{part:tauB}, an easy direct computation shows that
for any sequence $\mu=\mu_1\cdots \mu_k$ of length $k$ we have 
\[
\tau(s_{\mu})=
e_{\mu_1}\otimes \cdots\otimes e_{\mu_{k-1}}\otimes e_{\mu_{k}}-e_{\mu_2}\otimes \cdots\otimes e_{\mu_k}\otimes e_{\mu_{1}} 
\]
Since elements of this form span $W$, inclusion~\ref{part:tauB} follows.

It remains to prove~\ref{part:Aker}. 
Since $\Ker(\tau)$ is $\Gamma$-invariant, to verify the inclusion $A\subseteq \Ker(\tau)$ we only need to check that $\tau(t_{i,\omega})=0$. But since $i\notin \omega$, as an element of $V^*\otimes \Lie_{k+1}(V)$ we have \[t_{i,\omega}=e_{i\omega}=e_i^*\otimes [e_{\omega_1},e_{\omega_2},\ldots,e_{\omega_{k+1}}].\] Since $\omega$ does not contain $i$, the term $[e_{\omega_1},e_{\omega_2},\ldots,e_{\omega_{k+1}}]$ belongs to $\Lie_{k+1}(\ker e_i^*)$, so $t_{i,\omega}$ belongs to $\Ker(\Phi)$. This shows that $\tau(t_{i,\omega})=0$ as claimed, so $A\subseteq \Ker(\tau)$.
The opposite inclusion $\Ker(\tau)\subseteq A$ (which is what we ultimately need) is implicitly proved in 
\cite[Lemma~5.4]{Bar} and also in \cite[Prop~3.2]{Satoh}, but for clarity we will give a short direct proof.

Given any sequence $\delta=i\omega=i\omega_1\omega_2\ldots\omega_{k+1}$ of length $k+2$, let $c({\delta})$ denote the number of times
$i$ (the first element of $\delta$) appears in the tail $\omega_1\ldots \omega_{k+1}$.
We already observed that $e_{\delta}\in A$ when $c({\delta})=0$ since then $e_{\delta}=t_{i,\omega}$.

We next claim that if $c({\delta})\geq 2$, then we can write 
$e_{\delta}=x+y$ where $x\in A$ and $y$ is a linear combination of elements $e_{\gamma}$ with $c({\gamma})<c({\delta})$.
Indeed, choose any index $j\in [n]$ with $j\notin \delta$, which is possible since the number of distinct elements
of $\delta$ is at most $k+2-c(\delta)\leq n-c(\delta)\leq n-2$. Let $\omega'$ be the sequence obtained from $\omega$ by replacing all appearances
of $i$ by $j$ and let $\delta'=i\omega'$. Note that $c(\delta')=0$ so $e_{\delta'}\in A$.

Let $E_{ij}\in\Gamma$ be any element projecting to the elementary matrix $E_{ij}\in \SL_n(\Z)$.
For simplicity, we will write the action of $\Gamma$ on the left. Set 
$z=E_{ij}^2 e_{\delta'}-2E_{ij}e_{\delta'}$, so $z\in A$. The action of $E_{ij}^p$ on 
$e_{\delta'}$ will replace $e_i^*$ by $e_i^*-pe_j^*$ and each occurrence of $e_j$ by $e_j+pe_i$, and a simple computation
shows that $z=(2^{c({\delta})}-2)e_{\delta}+u$ where $c({\gamma})<c({\delta})$ for each $e_{\gamma}$ that appears in $u$.
Since $c({\delta})>1$, we have $e_{\delta}=\frac{1}{2^{c({\delta})}-2}(z-u)$, as desired.

Applying this claim inductively shows that any element of $V^*\otimes \Lie_{k+1}(V)$ can be written $z=a+b$, where
$a\in A$ and $b=\sum \lambda_{\delta}e_{\delta}$ with $c({\delta})=1$ for each $\delta$. Moreover, using the Lie algebra
axioms, we can assume that each $\delta$ in the above sum has the form $\delta=ii\varepsilon$ where $\varepsilon$
is a sequence of length $k$ with $i\notin \varepsilon$.

Note that  $\Phi(e_{ii\varepsilon})=\Phi(e_i^*\otimes e_{i\varepsilon})$ is equal to $e_{\varepsilon_1}\otimes e_{\varepsilon_2}\otimes \cdots\otimes e_{\varepsilon_{k}}$ when $i\notin \varepsilon$. To see this, note that when the left-normed commutator $e_\omega=[[[e_{\omega_1},e_{\omega_2}],\cdots],e_{\omega_\ell}]\in \Lie_{k+1}(V)$ is considered as an element of $V^{\otimes k+1}$, it is equal to $e_{\omega_1}\otimes e_{\omega_2}\otimes \cdots\otimes e_{\omega_{k+1}}$ plus permutations of the form $e_{\omega_\ell}\otimes \cdots$ for $\ell\neq 1$. 

Now take an arbitrary $z\in \Ker(\tau)$ and write it as a sum $z=a+b$ as above. Since $A\subseteq \Ker(\tau)$, this implies that $\tau(b)=0$. 
Since $\Phi(e_{ii\varepsilon}) = e_{\varepsilon_1} \otimes e_{\varepsilon_2}\otimes \cdots\otimes e_{\varepsilon_k}$ for $i\notin \varepsilon$ and all such simple tensors coming from different $\varepsilon$ are linearly independent, 
to have $\tau(b)=0$  means that for each $\varepsilon$ we have $\sum_{i\not\in \varepsilon}\lambda_{e_{ii\varepsilon}}=0$. It follows that $b$ must be a linear combination
of elements of the form $e_{ii\varepsilon}-e_{jj\varepsilon}$ with $i\neq j$ and $i,j\not\in \varepsilon$. 
However, these elements too belong to $A$:
\[e_{ii\varepsilon}-e_{jj\varepsilon}=E_{ij}e_{ij\varepsilon}-e_{ij\varepsilon}+e_{ji\varepsilon}=E_{ij}t_{i,j\varepsilon}-t_{i,j\varepsilon}+t_{j,i\varepsilon}\in A\]
We conclude that $b\in A$ and hence $z\in A$, as desired.
\end{proof}

\noindent
\begin{tabularx}{\linewidth}[t]{XXX}
{\raggedright
Thomas Church\\
Department of Mathematics\\
Stanford University\\
450 Serra Mall\\
Stanford, CA 94305\\
\myemail{tfchurch@stanford.edu}}
&
{\raggedright
Mikhail Ershov\\
Department of Mathematics\\
University of Virginia\\
141 Cabell Drive\\
Charlottesville, VA 22904\\
\myemail{ershov@virginia.edu}}
&
{\raggedright
Andrew Putman\\
Department of Mathematics\\
University of Notre Dame\\
279 Hurley Hall\\
Notre Dame, IN 46556\\
\myemail{andyp@nd.edu}}
\end{tabularx}
\end{document}